\newtheorem{assumption}[theorem]{Assumption}
\newcommand{\N}{{\mathbb N}}
\newcommand{\mE}{\mathbb E}
\newcommand{\mR}{\mathbb R}
\newcommand{\mC}{\mathbb C}
\newcommand{\qall}{\quad\forall}
\newcommand{\cA}{{\mathcal A}}
\newcommand{\cM}{\mathcal M}
\newcommand{\mN}{\mathbb N}
\newcommand{\cG}{{\cal G}}
\newcommand{\cF}{\mathcal F}
\newcommand{\norm}[2]{\left\|{#1}\right\|_{#2}}
\newcommand{\brac}[1]{\left(#1\right)}
\newcommand{\sett}[1]{\left\{#1\right\}}
\newcommand{\gdnota}[1]{\left[\tfrac{1}{2} \right]_{#1} }
\newcommand{\abs}[1]{\left|#1\right|}
\newcommand{\bsb}[1]{\boldsymbol{#1}}
\newcommand{\floor}[1]{\lfloor #1 \rfloor}
\newcommand{\bga}{\bsb{\gamma}}
\newcommand{\eps}{\epsilon}
\newcommand{\vzero}{\boldsymbol{0}}
\newcommand{\vecxi}{\boldsymbol{\xi}}
\newcommand{\vupsilon}{{\boldsymbol{\upsilon}}}
\newcommand{\mfu}{\mathfrak{u}}
\newcommand{\mW}{\mathcal{W}}
\newcommand{\vx}{\boldsymbol{x}}
\newcommand{\vy}{\boldsymbol{y}}
\newcommand{\vz}{\boldsymbol{z}}
\newcommand{\vR}{\boldsymbol{R}}
\newcommand{\vell}{\boldsymbol{\ell}}
\newcommand{\vm}{\boldsymbol{m}}
\newcommand{\vzeta}{\boldsymbol{\zeta}}
\newcommand{\vnu}{\boldsymbol{\nu}}
\newcommand{\valpha}{\boldsymbol{\alpha}}
\newcommand{\vbeta}{\boldsymbol{\beta}}
\newcommand{\ve}{\boldsymbol{e}}
\newcommand{\spann}{{\rm{span}}}
\newcommand{\goto}{\rightarrow}
\newcommand{\wtd}{\widetilde}
\newcommand{\con}{K}
\crefname{hypothesis}{Hypothesis}{Hypotheses}
\title{Analytic and Gevrey class regularity for parametric elliptic eigenvalue problems and applications\thanks{Submitted to the editors June 30, 2023.}}
\author{Alexey Chernov\thanks{Institut f\"ur Mathematik,	Carl von Ossietzky Universit\"at Oldenburg
		(\email{alexey.chernov@uni-oldenburg.de}, \email{tung.le@uni-oldenburg.de}).}
	\and
	T\`ung L\^e\footnotemark[2]}
\begin{document}

\maketitle

\begin{abstract}
We investigate a class of parametric elliptic eigenvalue problems with homogeneous essential boundary conditions, where the coefficients (and hence the solution) may depend on a parameter. For the efficient approximate evaluation of parameter sensitivities of the first eigenpairs on the entire parameter space we propose and analyse Gevrey class and analytic regularity of the solution with respect to the parameters. This is made possible by a novel proof technique, which we introduce and demonstrate in this paper. Our regularity result has immediate implications for convergence of various numerical schemes for parametric elliptic eigenvalue problems, in particular, for the Quasi-Monte Carlo integration.
\end{abstract}

\begin{keywords}
elliptic eigenvalue problems, parametric regularity analysis, numerical integration, quasi-Monte Carlo methods
\end{keywords}

\begin{MSCcodes}
65N25, 65C30, 65D30, 65D32, 65N30
\end{MSCcodes}
\section{Introduction} \label{sec: intro}
Multiparametric eigenvalue problems arise in numerous applications in engineering and physics. For the perturbation theory we refer to the works by Rellich and Kato \cite{Kato:1966:PTL, rellich1969perturbation} and more recent works by Andreev and Schwab \cite{Andreev2012}, Gilbert et al. \cite{Gilbert2019, GilbertScheichl}, Grubi\v{s}i\'{c} et al. \cite{Hakula23}, and D\"olz and Ebert \cite{DoelzPrep22} where the particular case of stochastic parameter perturbation has been addressed. Irrespectively from the nature of the perturbation the study of the regularity of the solution is important since it allows to design appropriate approximation schemes in the parameter domain, tailored to a specific regularity class. For example, {the} articles \cite{Gilbert2019,GilbertScheichl} propose efficient Quasi-Monte Carlo approximations for elliptic eigenvalue problems with stochastic coefficients. The corresponding regularity analysis is a part of the convergence proof.

In this paper, we address the regularity of the solution to real-valued second-order elliptic eigenvalue problems (EVP) in the general form
\begin{equation}\label{gen equation}
	{\small
	\begin{split}
		-\nabla\cdot (\tilde a(\vx,\vy)\nabla u(\vx,\vy))
		+b(\vx,\vy)u(\vx,\vy)
		&=
		\lambda(\vy)
		c(\vx,\vy)u(\vx,\vy) \qquad
		(\vx,\vy) \in D \times U, 	\\
		u(\vx,\vy)&=0 \hspace*{31mm} 
		(\vx,\vy) \in \partial D \times U, \end{split}}
\end{equation}
where the derivative operator $\nabla$ acts in the physical variables $\vx \in D$, where $D \subset \mR^d$ is a bounded Lipschitz domain. The vector of parameters $\vy = (y_1,y_2,\dots) \in U$ has either finitely many or countably many components. For example, if $\vy$ is a random parameter, the model with $U:= [-\frac{1}{2},\frac{1}{2}]^{\mN}$ and $\vy \in U$ being a countably-dimensional vector of independently and identically distributed uniform random variables has been frequently used in the literature \cite{CohenDevoreSchwab2010,CohenDevoreSchwab2011,KuoSchwabSloan2013,KuoNuyens2016}. We mention that for our analysis boundedness of the parameter domain is necessary as it implies the positivity of the relative spectral gap introduced in Section \ref{sec:EVPtheory} below.

It turns out that it is more convenient to work with the rescaled version of the diffusion coefficient
\begin{equation}\label{def-a}
a(\vx, \vy) = \chi_1 \tilde a(\vx,\vy),
\end{equation}
where $\chi_1$ is the smallest eigenvalue of the deterministic Dirichlet-Laplace eigenvalue problem in the domain $D$
\begin{equation}\label{def-chi}
\chi_1 = \inf_{v \in H^1_0(D)} \dfrac{\int_D |\nabla u|^2 \, dx}{\int_D |u|^2 \, dx}.
\end{equation}
On the one hand, for a fixed domain $D$ the eigenvalue $\chi_1$ is a fixed positive number and therefore the rescaling \eqref{def-a} is no restriction of generality. On the other hand, this trivial rescaling helps to simplify the expressions in the forthcoming proofs. Moreover, we may argue that $\tilde{a}$ is the only parameter in \eqref{gen equation} which depends as $\mathcal L^2$ on the length units $\mathcal L$, whereas by \eqref{def-chi} $\chi_1$ is proportional to~$\mathcal L^{-2}$. In this sense, the rescaling \eqref{def-a} is natural, since it renders $a(\vx,\vy)$ invariant with respect to the change of length units. 

In the following we assume that the coefficients $a$, $b$, $c$ admit the uniform bounds
\begin{equation}\label{abc-bounds}
	\frac{\overline{a}}{2} \geq a(\vx,\vy)\geq  \underline{a}  >0, \qquad
	\frac{\overline{b}}{2}\geq b(\vx,\vy)\geq  0, \qquad 
	\frac{\overline{c}}{2}\geq c(\vx,\vy)\geq  \underline{c}   >0
\end{equation}
for all $\vy \in U$ and almost all $\vx \in D$. In this case for every fixed $\vy \in U$ the eigenvalues of~\eqref{gen equation} are real, positive, bounded from below, and the smallest eigenvalue $\lambda_1$ is simple, see \cite[Theorem 8.37]{GilbargTrudinger98} and \cite{Davies1995,Henrot2006}. Since the coefficients depend on the parameters~$\vy$, the eigenvalues $\lambda$ and corresponding eigenfunctions $u$ will depend on $\vy$ as well. Particularly, if $\vy$ is random, then $\lambda(\vy)$ and $u(\vx,\vy)$ will be {random, too.}

In this paper we present a rigorous regularity analysis for the smallest eigenvalue~$\lambda_1$ and the corresponding eigenfunction $u_1$ with respect to the parameter $\vy$ in the general case where the coefficients $a, b, c$ are infinitely differentiable functions of $\vy$ belonging to the Gevrey class $G^{\delta}$ for some fixed $\delta \geq 1$. The scale of Gevrey classes is a nested scale of the parameter $\delta$ that fills the gap between analytic and $C^\infty$  functions 
\[
{\mathcal A} \subseteq G^{\delta} \subset G^{\delta'} \subset C^\infty, \qquad 1 \leq \delta < \delta'. 
\]
The class of analytic functions $\mathcal A$, corresponding to the special case $G^\delta$ with $\delta = 1$, is the smallest and arguably most important of this scale (we refer to Section \ref{sec: Preliminary} for precise definitions) and has been addressed for parametric/stochastic EVP before. The work \cite{Andreev2012} proves the analyticity of the smallest eigenpair using elegant complex analysis arguments. More recently Gilbert et al. \cite{Gilbert2019,GilbertScheichl} have used inductive real-variable arguments to derive explicit upper bounds for all $\vy$-derivatives of the eigenpair that are close to bounds for real analytic functions, but are still include $\eps$-suboptimal terms in the factorial term. 
Moreover, all above mentioned works consider the special case of the affine parametrization of the coefficients of the type
\begin{equation}\label{a-affine}
	a(\vx,\vy) = a_0(\vx) + \sum_{m=1}^\infty y_m a_m(\vx).
\end{equation}
Let us also mention a specific periodic model 
\begin{equation}\label{a-periodic}
	a(\vx,\vy) = a_0(\vx) + \sum_{m=1}^\infty \sin(2 \pi y_m) a_m(\vx)
\end{equation}
introduced and analysed for parametric elliptic source problems in the work \cite{Kaarnioja2020}.

\newpage

In this paper we go beyond these settings in various ways. 
\begin{enumerate}
	\item We demonstrate that the reason for the suboptimality in the real-variable inductive proofs is indeed an artefact of the proof strategy. We introduce a modified argument and obtain optimal upper bounds for the $\vy$-derivatives of the smallest eigenvalue and the corresponding eigenfunction.
	\item We go beyond affine coefficient expansions as in \eqref{a-affine} or the periodic expansion \eqref{a-periodic} and allow for general analytic ($\delta = 1$) and Gevrey class ($\delta \geq 1$) dependency on $\vy$ in all coefficients $a, b, c$ in \eqref{gen equation}.
	\item The case of Gevrey class non-analytic coefficients ($\delta>1$) has not been considered in the existing literature (notice that complex analysis arguments cannot be applied in this case). {This case is important since} it greatly extends the palette of parametrizations that can be used to model various phenomena, covering e.g. partition-of-unity parametrizations {in $\vy$. In this case the coefficients $a,b,c$ in \eqref{gen equation} would be Gevrey-$\delta$ non-analytic functions in $\vy$.}	
\end{enumerate}
It is worth mentioning that the forthcoming analysis for the Gevrey non-analytic case $\delta>1$ is no more complicated than the proof for the analytic case $\delta = 1$. This follows from the rather elementary estimate $(n!\,m!)^{\delta-1}\leq ((n+m)!)^{\delta-1}$ for any natural numbers $n$ and $m$ and $\delta>1$. We refer to Section \ref{sec: main result} below for the details.

The proofs of new, more general regularity results become possible due to the novel proof technique that we call the \emph{alternative-to-factorial technique}. The aim of this paper is to introduce this approach and demonstrate it in the example of the parametric eigenvalue problems. However, our new proof technique is not limited to the class of EVPs considered here and can be applied to more general {nonlinear} parametric PDEs, and hence this is a subject of our current research.

The structure of the paper is as follows. In Section \ref{sec: Preliminary} we demonstrate the reasons for the deficiency of the real-value inductive arguments from the existing literature and explain how our novel \emph{alternative-to-factorial technique} helps to overcome the difficulties. In Section \ref{sec:AssumptionMainThm} we formulate the regularity assumptions on the coefficients of the eigenvalue problem \eqref{gen equation} and state our main theorem claiming the same type regularity for the smallest eigenvalue and the corresponding eigenfunction. In Section~\ref{sec:EVPtheory} we summarize the properties of elliptic eigenvalue problems needed for the forthcoming regularity analysis. In Section \ref{sec: main result} we present the proof of the main result. The meaning and the validity of the main regularity result is illustrated by numerical experiments in Section \ref{sec: num exp}. The appendix contains technical results required for the proofs, including results on the Quasi-Monte Carlo integration of high-dimensional Gevrey-$\delta$ functions being interesting on their own.

{In what follows} we denote by $\mN$ the set of positive and by $\mN_0$ of nonnegative integers. By $L^2(D)$ and $L^\infty(D)$ we denote the spaces of square integrable and bounded functions equipped with standard norms. The space $V:= H^1_0(D)$ is the subspace of~$L^2(D)$ consisting of functions with square integrable partial derivatives and having vanishing Dirichlet trace at the boundary $\partial D$. The space $V$ equipped with the norm
\begin{equation}\label{def-V-norm}
\|v\|_V := \chi_1^{-1/2} \|\nabla v\|_{L^2(D)},
\end{equation}
where $\chi_1$ is the smallest eigenvalue of the deterministic Dirichlet-Laplace problem \eqref{def-chi}. Similarly as in \eqref{def-a}, the multiplier $\chi_1$ is added here for scaling reasons. This will help us to simplify the expressions throughout the paper. Finally, $V^*:=H^{-1}(D)$ is the dual to $V$. We will also occasionally use the short notation $\norm{\cdot}{\infty}$ for the essential supremum norm $\norm{\cdot}{L^{\infty}(D)}$ in the physical domain $D$.

\newpage

\section{Preliminaries} \label{sec: Preliminary}
\subsection{Deficiency of the real-variable inductive argument} \label{sec: Deficiency}
The deficiency of the real-variable inductive argument for nonlinear problems can be seen already in the one-dimensional case. It is in fact a consequence of the Leibniz product rule combined with the triangle inequality. To illustrate this, we collect some elementary  results on real analytic functions on the real line, see e.g. \cite[Section 1]{Krantz2002}.

\begin{definition}
Let $U \subseteq \mR$ be an open domain. A function $f$ is called real analytic at $y_0 \in U$, if it can be represented by a convergent power series
	\begin{equation}\label{power-series-1d}
		f(y) = \sum_{j=0}^\infty a_j (y-y_0)^j
	\end{equation}
	in some neighbourhood of $y_0$. The function $f$ is called real analytic in an open subset $I \subseteq U$, if it is analytic in all $y_0 \in I$.
\end{definition}
The uniqueness of the power series implies the identity $a_n = f^{(n)}(y_0) / n!$ and (after some additional work) the following alternative characterization of real analytic functions.

\begin{theorem}\label{thm:ana-bound-1d}
	Let $f \in C^\infty(I)$ for some open interval $I$. The function $f$ is real analytic if and only if 
	for each $y_0 \in I$, {there is} an open interval $J$, with
	$y_0 \in J \subseteq I$, and constants $R > 0$ and $C > 0$ such that the derivatives of $f$ satisfy
	\begin{equation}\label{ana-bound-1d}
		\frac{|f^{(n)}(y)|}{n!} \leq \frac{C}{R^n}, \qquad \forall y \in J, \quad n \in \mN_0.
	\end{equation}
	The radius of convergence $\rho$ of the power series \eqref{power-series-1d} at some $y_0 \in I$ can be determined as the supremum of $R$ such that \eqref{ana-bound-1d} holds with some $C = C(R)$.
\end{theorem}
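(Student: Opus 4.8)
The plan is to prove the two implications of the equivalence separately and then read off the formula for $\rho$ as a quantitative refinement of both. For \emph{analytic $\Rightarrow$ derivative bounds}: fix $y_0 \in I$ and let $\rho > 0$ be the radius of convergence of the series \eqref{power-series-1d} at $y_0$. Pick any $r$ with $0 < r < \rho$; absolute convergence at $|y - y_0| = r$ gives $|a_j| \le M r^{-j}$ for all $j$, where $M := \sup_j |a_j| r^j < \infty$. Since a power series may be differentiated term by term inside its interval of convergence, for $|y - y_0| \le s$ with $0 < s < r$ one has $f^{(n)}(y) = \sum_{j \ge n} a_j \frac{j!}{(j-n)!}(y - y_0)^{j-n}$; substituting $j = n+k$ and using the elementary generating-function identity $\sum_{k \ge 0} \binom{n+k}{k} x^k = (1-x)^{-(n+1)}$, valid for $|x|<1$, I would obtain
\[
  \frac{|f^{(n)}(y)|}{n!} \;\le\; M r^{-n} \sum_{k \ge 0} \binom{n+k}{k} (s/r)^k \;=\; \frac{M}{1 - s/r}\,(r - s)^{-n}.
\]
Choosing $s := r - R$ for any prescribed $R \in (0,r)$ and $J := (y_0 - s, y_0 + s)$ yields \eqref{ana-bound-1d} with the explicit constant $C = Mr/R$; letting $r \uparrow \rho$ shows moreover that, for fixed $y_0$, the bound \eqref{ana-bound-1d} can be arranged for every $R < \rho$.

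For \emph{derivative bounds $\Rightarrow$ analytic}: fix $y_0 \in I$ together with the interval $J$ and constants $R, C$ from the hypothesis. Since $f \in C^\infty(I)$, Taylor's theorem with the Lagrange remainder gives, for $y \in J$ and $N \in \mN$,
\[
  f(y) = \sum_{n=0}^{N-1} \frac{f^{(n)}(y_0)}{n!}(y - y_0)^n + \frac{f^{(N)}(\xi)}{N!}(y - y_0)^N,
\]
with $\xi$ between $y_0$ and $y$, hence $\xi \in J$. By \eqref{ana-bound-1d} the remainder is bounded in modulus by $C\,(|y - y_0|/R)^N \to 0$ as $N \to \infty$ whenever $|y - y_0| < R$, so the Taylor series of $f$ at $y_0$ converges to $f(y)$ on the nonempty open interval $J \cap (y_0 - R, y_0 + R)$. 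Thus $f$ is real analytic at $y_0$, and since $y_0 \in I$ was arbitrary, $f$ is real analytic in $I$.

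For the formula for the radius of convergence, let $R^\ast$ denote the supremum of those $R > 0$ for which \eqref{ana-bound-1d} holds on some open interval about $y_0$ with an admissible constant $C = C(R)$. Specialising \eqref{ana-bound-1d} to $y = y_0$ gives $|a_n| = |f^{(n)}(y_0)|/n! \le C R^{-n}$, hence $\limsup_n |a_n|^{1/n} \le 1/R$ and Cauchy--Hadamard yields $\rho \ge R$; taking the supremum over admissible $R$ gives $\rho \ge R^\ast$. The refinement recorded at the end of the first step gives the reverse inequality $R^\ast \ge \rho$, so $\rho = R^\ast$. I expect the only genuinely delicate point to be this last identity: the interval $J$ produced in the ``analytic $\Rightarrow$ bounds'' step shrinks as $r \uparrow \rho$, so rather than trying to track it one should read the radius of convergence directly off the coefficient bound at the single point $y_0$ via Cauchy--Hadamard. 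The remaining ingredients --- term-by-term differentiation of power series, the binomial generating-function identity, and the Lagrange remainder estimate --- are entirely routine.
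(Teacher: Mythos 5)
Your proof is correct. Be aware, though, that the paper does not actually prove Theorem~\ref{thm:ana-bound-1d}: it is quoted as a classical fact with a pointer to \cite[Section~1]{Krantz2002}, so there is no in-text argument to compare against. What you supply is the standard proof: term-by-term differentiation of the power series plus the negative binomial identity $\sum_{k\ge 0}\binom{n+k}{k}x^k=(1-x)^{-(n+1)}$ for the ``analytic $\Rightarrow$ bounds'' direction; the Lagrange form of the Taylor remainder for the converse; and, for $\rho=R^\ast$, the combination of the ``every $R<\rho$ is admissible'' refinement with a Cauchy--Hadamard estimate evaluated at the single point $y_0$. Your remark that reading the radius off the coefficients at $y_0$ avoids having to track the shrinking neighbourhoods $J$ from the forward direction is exactly the right observation. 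One cosmetic point: the interval $J=(y_0-s,y_0+s)$ you construct in the forward direction may protrude outside $I$, so it should be replaced by $J\cap I$ (still an open interval containing $y_0$ on which the derived bound holds), as the statement requires $J\subseteq I$.
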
 

Let now $f$ be some real analytic function normalized to $C = 1$, centred at $y$ with radius of convergence $\rho_f$ and consider $g = f^2$. Clearly, $g$ is analytic, but what is its radius of convergence $\rho_g$? If we use the Leibniz product rule, the triangle inequality and \eqref{ana-bound-1d}, we get
\begin{equation}\label{g2-argument}
	\frac{|g^{(n)}(y)|}{n!} \leq \frac{1}{n!}\sum_{m=0}^n \brac{n \atop m} |f^{(m)}(y)| \, |f^{(n-m)}(y)|
	\leq  \frac{n+1}{R^n}.
\end{equation}
since
\begin{equation}\label{g2-argument2}
	\sum_{m=0}^n \brac{n \atop m} m! (n-m)! = (n+1) n!
\end{equation}
Observe that the upper bound has increased by the factor $(n+1)$ and therefore does not fit the upper bound \eqref{ana-bound-1d} with the same $R$. One way to proceed is to use the crude estimate $n+1 \leq 2^n$ which holds for all $n \in \mN_0$, but according to the upper bound this implies that the radius of convergence~$\rho_g$ of $g$ degrades to $\rho_f/2$. This kind of argument clearly cannot be used in inductive proofs, since when $n \to \infty$ the radius of convergence will degenerate to zero. A more successful option is to replace the term $n!$ by $(n!)^{1+\eps}$ for $\eps > 0$, see e.g. the work by Gilbert et al. in \cite{Gilbert2019} for further details. In this case $\eps$-suboptimal terms pollute the upper bounds and analytic-type estimates as in \eqref{ana-bound-1d} cannot be obtained for $g$.

On the other hand, it is the classic result that the function $g=f^2$ is analytic with \emph{the same radius of convergence} as the function $f$ itself{, see e.g. \cite[Chapter 1, Proposition 1.1.7]{Krantz2002}}. This can either be seen from the root test applied to \eqref{g2-argument}, or by the following explicit bound for the remainder of the power series of $g$. 
{Recall the power series representation \eqref{power-series-1d} for $f$ and let $S_N(y) = \sum_{m=0}^N b_m (y-y_0)^m$ be the partial sum of $g=f^2$, where $b_m = \sum_{j+k = m} a_ja_k$. 
Then}{ 
\begin{equation*} 
\begin{split}
	|g(y) - S_{N}(y)| 
	&= \left| \sum_{m=N+1}^\infty\, \sum_{j+k = m} a_j a_k (y-y_0)^{j+k} \right| \\ 
	&\leq 2 \sum_{j=\floor{(N+1)/2}}^\infty |a_j| \, |y-y_0|^j \sum_{k=0}^\infty |a_k| \, |y-y_0|^k,
	\end{split}
\end{equation*}
where $\floor{.}$ is the floor function.} Since the power series of $f$ converges uniformly and absolutely for $|y-y_0|<\rho_f$, the $k$-sum is bounded and the $j$-sum tends to zero for $N \to \infty$ and all {$y$ satisfying} $|y-y_0| < \rho_f$. Hence $\rho_g = \rho_f$. This demonstrates that inductive proofs based on the Leibniz product rule and the triangle inequality in combination with \eqref{ana-bound-1d} can hardly lead to optimal regularity estimates for nonlinear parametric problems.  

\subsection{The falling factorial estimates as a remedy}\label{sec: Remedy}
Observe that the quadratic type nonlinearity $g = f^2$ discussed in the previous section is the simplest, yet very important type of nonlinearity appearing e.g. in Navier-Stokes equations and eigenvalue problems that serve as a model for this paper. The use of the Leibniz product rule together with the triangle inequality in the parametric regularity proofs is a desirable tool that splits the nonlinear part into basic contributions and therefore should be preserved. \emph{Instead we seek for an alternative equivalent for \eqref{ana-bound-1d} that can be used in the course of induction}. To get a better insight, let us understand whether there is a function $f$ defined in a neighbourhood $J$ of $y_0=0$, such that 
\begin{equation}\label{f2-cond}
	|g^{(n)}(y)| \leq |f^{(n)}(y)| \qquad \forall y \in J, \quad n \in \mN_0
\end{equation}
is valid for $g = f^2$? The answer is positive and an example of such a function $f$ can be given in closed form, that is
\begin{equation}\label{f2-func}
	f(y) = \tfrac{1}{2}(1-\sqrt{1-y}).
\end{equation}
Indeed, $g(y) = f(y) - \frac{y}{4}$ and hence \eqref{f2-cond} holds as equality for $n \geq 2$. Basic calculations also verify \eqref{f2-cond} for $n = 0$ and $1$ and any $J \subset [-3,1)$. The derivatives of $f$ are all positive for $y<1$ and satisfy
\begin{equation}\label{f2-fallfact}
	2f^{(n)}(y) = (-1)^{n-1}\left(\tfrac{1}{2}\right)\left(\tfrac{1}{2} -1 \right)\dots \left(\tfrac{1}{2}-n + 1\right) (1-y)^{\tfrac{1}{2}-n} = (-1)^{n-1} \left(\tfrac{1}{2}\right)_n (1-y)^{\tfrac{1}{2}-n}
\end{equation}
for $n \geq 1$ and the \emph{falling factorial} notation
\begin{equation}\label{def-fallfact}
	(q)_n := \left\{
	\begin{array}{cl}
		1, & n=0, \\
		q(q-1)\dots(q-n+1), & n \geq 1,
	\end{array}
	\right. \qquad q \in \mR, \quad n \in \mN_0.
\end{equation}
For $q<1$ the falling factorial $(q)_n$ is a sign-alternating sequence of $n$. As already indicated by \eqref{f2-func} and \eqref{f2-fallfact}, for our purposes the particular value $q=\frac{1}{2}$ will be important. To further simplify the notation and avoid keeping track of the sign alteration, we denote the \emph{absolute value of the falling factorial of $\frac{1}{2}$} by
\begin{equation*} 
	\gdnota{n}
	:=
	\left|\left(\tfrac{1}{2} \right)_{n}\right|.
\end{equation*}
This notation appears somewhat non-standard, but quite convenient, as we will see in the sequel. Notice that the defining property of the Gamma function $\Gamma(z+1) = z\Gamma(z)$ and $\Gamma(\tfrac{1}{2}) = \sqrt{\pi}$ imply the representation for all nonnegative integers $n \in \N_0$ 
\begin{equation}\label{fallfact-gamma}
	\left[ \tfrac{1}{2} \right]_{n} = \frac{1}{2\sqrt{\pi}}
	|\Gamma(n-\tfrac{1}{2})|
	\quad \text{or equivalently} \quad
	n! = \xi_n \gdnota{n} \quad \text{with} \quad \xi_n = 2\sqrt{\pi}\frac{\Gamma(n+1)}{|\Gamma(n - \tfrac{1}{2})|}
\end{equation}
and the trivial bound $1 \leq \xi_n \leq 2 \cdot 2^n$ implying
\begin{equation}\label{multiindex-est-2}
	[\tfrac{1}{2}]_{n} \leq n! \leq 2 \cdot 2^n [\tfrac{1}{2}]_{n}.
\end{equation}
Observe next that $g^{(n)}(0) = f^{(n)}(0)$ holds for $n \geq 2$ and therefore this value can be computed in two different ways: On the one hand, it satisfies \eqref{f2-fallfact}, i.e. $g^{(n)}(0) = \tfrac{1}{2}\gdnota{n}$. On the other hand, by the Leibniz product rule
\begin{equation}
	g^{(n)}(0) = \sum_{i=0}^n \brac{n \atop i} f^{(i)}(0)f^{(n-i)}(0) = \frac{1}{4}\sum_{i=1}^{n-1} \brac{n \atop i} \gdnota{i}\, 
	\gdnota{n-i}, \qquad n \geq 2
\end{equation}
where we used $f(0) = 0$ in the last step.
Since by definition $\gdnota{0} = 1$, this implies the following combinatorial identities.
\begin{lemma} \label{lem:ff-main-id}
	For all integers $n\geq 2$ the following identities hold
	\begin{equation*}\label{ff-main-id} 
		\footnotesize{
		\sum_{i=1}^{n-1} \binom{n}{i}  
		\gdnota{i}\, 
		\gdnota{n-i}
		=
		2
		\gdnota{n}, \quad
		\sum_{i=1}^{n} \binom{n}{i}  
		\gdnota{i}\, 
		\gdnota{n-i}
		=
		3
		\gdnota{n}, \quad
		\sum_{i=0}^{n} \binom{n}{i}  
		\gdnota{i}\, 
		\gdnota{n-i}
		=
		4
		\gdnota{n}.}
	\end{equation*}
	With the convention that the empty sum equals zero the statement extends to all integers $n \geq 0$ as 	
	\begin{equation*}
		\footnotesize{
		\sum_{i=1}^{n-1} \binom{n}{i}  
		\gdnota{i}\, 
		\gdnota{n-i}
		\leq
		2
		\gdnota{n}, \quad
		\sum_{i=1}^{n} \binom{n}{i}  
		\gdnota{i}\, 
		\gdnota{n-i}
		\leq
		3
		\gdnota{n}, \quad
		\sum_{i=0}^{n} \binom{n}{i}  
		\gdnota{i}\, 
		\gdnota{n-i}
		\leq
		4
		\gdnota{n}.}
	\end{equation*}
\end{lemma}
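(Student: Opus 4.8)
The plan is to reuse verbatim the two-way computation of $g^{(n)}(0)$ set up in the discussion preceding the lemma, and then to obtain the remaining two identities for free by restoring the boundary terms. Concretely, I would start from the function $f(y) = \tfrac12(1-\sqrt{1-y})$, which is real analytic on the interval $(-\infty,1)$ and in particular on a neighbourhood $J$ of $y_0=0$, satisfies $f(0)=0$, and by \eqref{f2-fallfact} has $f^{(n)}(0) = \tfrac12\gdnota{n}$ for every $n\geq 1$. Squaring the radical (equivalently, rewriting $(1-2f)^2 = 1-y$) gives the algebraic identity $g(y) := f(y)^2 = f(y) - \tfrac{y}{4}$, so $g$ and $f$ differ only by the degree-one polynomial $y/4$; hence $g^{(n)}(0) = f^{(n)}(0) = \tfrac12\gdnota{n}$ for all $n\geq 2$.

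The second step is to evaluate $g^{(n)}(0)$ a second time via the Leibniz product rule applied to $g = f\cdot f$. Since $f(0)=0$, the extreme terms $i=0$ and $i=n$ vanish, leaving
\begin{equation*}
  g^{(n)}(0) = \sum_{i=0}^n \brac{n \atop i} f^{(i)}(0)\, f^{(n-i)}(0) = \frac{1}{4}\sum_{i=1}^{n-1} \brac{n \atop i} \gdnota{i}\,\gdnota{n-i}, \qquad n \geq 2.
\end{equation*}
Equating the two expressions for $g^{(n)}(0)$ and cancelling the factor $\tfrac14$ yields the first identity $\sum_{i=1}^{n-1}\binom{n}{i}\gdnota{i}\,\gdnota{n-i} = 2\gdnota{n}$ for $n\geq 2$. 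The other two equalities then require no further analysis: the single term $i=n$ equals $\binom{n}{n}\gdnota{n}\,\gdnota{0} = \gdnota{n}$ because $\gdnota{0}=1$, so adding it promotes $2\gdnota{n}$ to $3\gdnota{n}$; adding in addition the term $i=0$, which is likewise $\gdnota{n}$, gives $4\gdnota{n}$.

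For the extension of the three bounds to all $n\geq 0$ I would simply handle $n=0$ and $n=1$ by inspection — in these cases the sums are empty or consist of one or two explicit terms, and one checks directly (with strict inequality) that they are bounded by $2\gdnota{n}$, $3\gdnota{n}$, $4\gdnota{n}$ respectively — whereas for $n\geq 2$ the equalities just proved are in particular the claimed inequalities; note that $\gdnota{n}>0$, so keeping or discarding the nonnegative boundary terms can only weaken the bound. I do not anticipate a real obstacle here: the only points that need a moment's care are checking the algebraic identity $f^2 = f - y/4$ and observing that subtracting a degree-one polynomial leaves all derivatives of order $\geq 2$ unchanged, which is precisely what makes the two evaluations of $g^{(n)}(0)$ comparable. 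An equally short alternative that avoids the explicit function $f$ is to expand $\sqrt{1-y}$ as a binomial series, recognise $\tfrac{1}{2\,n!}\gdnota{n}$ as the Taylor coefficients of $f$ at $0$, and compare the coefficients of $y^n$ on both sides of $f^2 = f - y/4$; this produces the same identities.
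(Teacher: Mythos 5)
Your proposal is correct and follows exactly the paper's own derivation: the paper does not give a separate proof of the lemma but states that it follows from the preceding two-way evaluation of $g^{(n)}(0)$ for $g=f^2$ with $f(y)=\tfrac12(1-\sqrt{1-y})$, which is precisely the computation you reproduce, together with the restoration of the boundary terms $i=0$ and $i=n$ and a direct check of $n=0,1$.
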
 
Remarkably, the constants in the right hand sides of the above identities are uniformly bounded, whereas the constant in the right hand side of \eqref{g2-argument2} grows as $(n+1)$. This suggests to replace the term $n!$ by the falling factorial $[\frac{1}{2}]_n$ in the inductive proof. Moreover, there holds the following alternative characterization of analytic functions.

\begin{theorem}
	Let $f \in C^\infty(I)$ for some open interval $I$. The function $f$ is real analytic if and only if 
	for each $y_0 \in I$, there are an open interval $J$, with
	$y_0 \in J \subseteq I$, and constants $C > 0$ and $R > 0$ such that the derivatives of $f$ satisfy
	\begin{equation}\label{ff-bound-1d}
		|f^{(n)}(y)|\leq \frac{C}{R^n} \gdnota{n}, \qquad \forall y \in J, \quad n \in \mN_0.
	\end{equation}
	The radius of convergence $\rho$ of the power series \eqref{power-series-1d} at some $y_0 \in I$ can be determined as the supremum of $R$ satisfying \eqref{ff-bound-1d}.
\end{theorem}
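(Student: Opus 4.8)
The plan is to reduce everything to the classical characterization in Theorem~\ref{thm:ana-bound-1d} by comparing the two weight sequences $n!$ and $\gdnota{n}$, for which the two-sided estimate \eqref{multiindex-est-2} is the main tool. The equivalence itself is immediate from the crude bound $\gdnota{n}\le n!\le 2\cdot 2^n\,\gdnota{n}$: if \eqref{ff-bound-1d} holds on $J$ with constants $C,R$, then $\gdnota{n}\le n!$ gives $|f^{(n)}(y)|\le C\,n!/R^n$ on $J$, so $f$ is real analytic by Theorem~\ref{thm:ana-bound-1d}; conversely, if $f$ is analytic, Theorem~\ref{thm:ana-bound-1d} provides $|f^{(n)}(y)|\le C\,n!/R^n$ on some $J\ni y_0$, and the right estimate in \eqref{multiindex-est-2} turns this into $|f^{(n)}(y)|\le 2C\,\gdnota{n}\,(2/R)^n$, i.e.\ \eqref{ff-bound-1d} with constant $2C$ and radius $R/2$ on the same $J$.

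For the statement on the radius of convergence the factor $2$ lost in the last step is too wasteful, so I would first sharpen \eqref{multiindex-est-2}. From the Gamma representation in \eqref{fallfact-gamma} one has $\xi_n=2\sqrt\pi\,\Gamma(n+1)/|\Gamma(n-\tfrac12)|$; writing $\Gamma(n+1)/\Gamma(n-\tfrac12)=n\,\Gamma(n)/\Gamma(n-\tfrac12)$ and using the elementary inequality $\Gamma(n)/\Gamma(n-\tfrac12)\le n^{1/2}$ (Gautschi's inequality, or Stirling's formula) gives the polynomial bound $\xi_n\le 2\sqrt\pi\,n^{3/2}$ for $n\ge 1$ (and $\xi_0=1$). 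In particular $\xi_n^{1/n}\to 1$, so for every $\beta>1$ there is a constant $C_\beta$ with
\begin{equation*}
	n!=\xi_n\,\gdnota{n}\le C_\beta\,\beta^{\,n}\,\gdnota{n}\qquad\text{for all }n\in\mN_0 .
\end{equation*}

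Now fix $y_0$, let $\rho$ be the radius of convergence of \eqref{power-series-1d} there, and let $S$ be the supremum of all $R$ for which \eqref{ff-bound-1d} holds on some $J\ni y_0$ with some $C$. For $S\le\rho$: if \eqref{ff-bound-1d} holds with $C,R$, then $\gdnota{n}\le n!$ shows \eqref{ana-bound-1d} holds with the same $C,R$, so $R\le\rho$ by the last part of Theorem~\ref{thm:ana-bound-1d}; taking the supremum gives $S\le\rho$. For $S\ge\rho$: fix any $R<\rho$; by Theorem~\ref{thm:ana-bound-1d} there are $R''$ with $R<R''\le\rho$, an interval $J\ni y_0$, and $C''>0$ such that $|f^{(n)}(y)|\le C''\,n!/(R'')^n$ for $y\in J$. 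Applying the sharpened bound with $\beta=R''/R>1$ yields $|f^{(n)}(y)|\le C''C_\beta\,\gdnota{n}\,(\beta/R'')^n=C''C_\beta\,\gdnota{n}/R^n$ on $J$, i.e.\ \eqref{ff-bound-1d} holds with radius $R$, so $R\le S$. Letting $R\uparrow\rho$ gives $S\ge\rho$, hence $S=\rho$.

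The main obstacle is exactly the sharpening of \eqref{multiindex-est-2} in the second paragraph: the crude factor $2\cdot 2^n$ alone would only prove $\rho/2\le S\le\rho$, and one genuinely needs the subexponential growth $\xi_n=O(n^{3/2})$ (equivalently $\xi_n^{1/n}\to1$) to identify $S$ with $\rho$. Everything else is a routine transfer between the two weighted derivative bounds through Theorem~\ref{thm:ana-bound-1d}.
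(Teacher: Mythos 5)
Your proof is correct, but it takes a genuinely different route from the paper's. The paper avoids any asymptotic sharpening of \eqref{multiindex-est-2}: it uses the fact that $f''$ is analytic with \emph{the same} radius of convergence $\rho$ as $f$ (citing Krantz, Proposition~1.1.13), so for $n\ge 2$ one can write $|f^{(n)}(y)|=|(f'')^{(n-2)}(y)|\le C\,(n-2)!/R^{n-2}$, and then the index-shifted bound $(n-2)!\le 4\,\gdnota{n}$ (which is sharp at $n=2$) converts this into \eqref{ff-bound-1d} with the same $R$ and $\widetilde C = 4CR^2$, with no loss of radius at all. You instead sharpen the crude two-sided estimate $\xi_n\le 2\cdot 2^n$ to the polynomial bound $\xi_n=O(n^{3/2})$ via Gautschi's inequality, so that $\xi_n^{1/n}\to1$, and then absorb the subexponential factor into an arbitrarily small reduction $R''\downarrow R$, letting $R\uparrow\rho$ at the end. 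Both arguments are valid. Your version is more self-contained (no appeal to the term-by-term differentiation result for power series) and makes explicit that the reason the two characterizations yield the same radius is precisely the subexponential growth of $n!/\gdnota{n}$; the paper's version is shorter and avoids the limiting argument in $R$ by exploiting the structural stability of the radius under differentiation. One small remark: your inequality $\Gamma(n)/\Gamma(n-\tfrac12)\le (n-1)^{1/2}\le n^{1/2}$ from Gautschi needs $n\ge 2$; for $n=1$ one has $\Gamma(1)/\Gamma(\tfrac12)=1/\sqrt\pi<1$, so the bound $\xi_n\le 2\sqrt\pi\,n^{3/2}$ still holds, but it is worth stating the base cases $n=0,1$ explicitly since $\xi_0=1$ and $\xi_1=2\sqrt\pi\,/\,|\Gamma(\tfrac12)|=2$.
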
 

\begin{proof} 
	Let us fix $y_0 \in I$. By Theorem \ref{thm:ana-bound-1d} the radius of convergence of the power series \eqref{power-series-1d} at $y_0$ can be written as
	\begin{equation*}
		\rho = \sup\{R~:~ R \text{ satisfies \eqref{ana-bound-1d} with some constant } C = C(R) \}.
	\end{equation*}
	Thus, if we denote
	\begin{equation*}
		\tilde \rho = \sup\{R~:~ R \text{ satisfies \eqref{ff-bound-1d} with some constant } C = C(R) \}
	\end{equation*}
	our aim would be to show that $\rho=\tilde \rho$. On the one hand, there holds $\gdnota{n} \leq n!$. Together with \eqref{ff-bound-1d} this implies \eqref{ana-bound-1d} with the same $R$ and $C$ and thereby $\rho \leq \tilde \rho$. On the other hand, if \eqref{ana-bound-1d} holds, i.e. $f$ is analytic at $y_0 \in I$, the root test shows that the second derivative $f''$ in analytic at $y_0$ too and moreover has the same radius of convergence $\rho$ as the original function $f$, see e.g. \mbox{Proposition 1.1.13} in \cite{Krantz2002}. Now let $n \geq 2$.  By Theorem \ref{thm:ana-bound-1d} this means
	\begin{equation}\label{ddf-ana}
		|f^{(n)}(y)| = |(f''(y))^{(n-2)}(y)| \leq \frac{C}{R^{n-2}} (n-2)!
	\end{equation}
	From definition \eqref{fallfact-gamma} of the falling factorial we find $(n-2)! \leq 4 \gdnota{n}$ and hence \eqref{ddf-ana} implies \eqref{ff-bound-1d} with the same $R$ and $\wtd C := 4 C R^2$
	\begin{equation*}
		|f^{(n)}(y)|\leq \frac{\wtd C}{R^n} \gdnota{n}, \qquad \forall y \in J, \quad n \in \mN_0, \quad n \geq 2.
	\end{equation*} 
	By a suitable adjustment of $\wtd C$ the range of $n$ can be extended to cover the cases $n = 0$ and $n = 1$.
	In other words, \eqref{ana-bound-1d} implies \eqref{ff-bound-1d} with the same $R$ and possibly adjusted~$C$. This implies $\rho \geq \tilde \rho$ and thereby finishes the proof. 
\end{proof} 

\subsection{Gevrey-class and analytic functions} \label{sec: Gevrey}
The following standard multiindex notations, see e.g. \cite{BoutetKnee1967,CohenDevoreSchwab2010}, will be used in what follows.   
We denote the countable set of finitely supported sequences of nonnegative integers by
\begin{equation}\label{cF-def}
	\cF := 
	\left\{
	\vnu =(\nu_1,\nu_2,\dots)~:~ \nu_j\in \mN_0,
	\text{ and } \nu_j \neq 0
	\text{ for a finite number of } j
	\right\},  
	\end{equation}
where the summation $\valpha + \vbeta$, equality $\valpha = \vbeta$ and the partial order relation $\valpha \leq \vbeta$ of elements in $\valpha, \vbeta \in \cF$ are understood componentwise. Moreover, $\valpha < \vbeta$ means $\valpha \leq \vbeta$ and $\valpha \not = \vbeta$. We write
\begin{align*}
	\abs{\vnu}
	:=
	\sum_{j\geq 1} \nu_j, 
	\qquad \qquad
	\vnu!
	:=
	\prod_{j\geq 1} v_j!, 
	\qquad \qquad
	\vR^{\vnu}= \prod_{j\geq 1} R_j^{v_j}
\end{align*}
for the absolute value, the multifactorial and the power with the multi-index $\vnu$ and a sequence $\vR=\{R_j\}_{j\geq 1}$ of positive real numbers. For two multiindices $\vm, \vnu \in \cF$ with $\vm \leq \vnu$ the binomial coefficient is defined by 
\begin{equation*}
\brac{\vnu \atop \vm} = \frac{\vnu !}{(\vnu - \vm)! \vm!}.
\end{equation*}
Notice that $\abs{\vnu}$ is finite if and only if $\vnu\in \cF$. For $\vnu\in \cF$ supported in $\sett{1,2,\dots, n}$, we define the partial derivative with respect to the variables $\vy$
\begin{align*}
	\partial^{\vnu} u
	=
	\frac{\partial^{\abs{\vnu}}u}
	{\partial y_1^{\nu_1} \partial y_2^{\nu_2} \dots \partial y_n^{\nu_n}}.
\end{align*}

The above multiindex notations are helpful for treatment of multiparametric objects. In particular, our regularity considerations are based on the following definition of Gevrey-$\delta$ functions with countably many parameters.

\begin{definition} \label{def:G-delta-def}
	Let $\delta \geq 1$, $B$ be a Banach space, $I \subset \mR^\mN$ be an open domain and a function $f:I \to B$ be such that its $\vy$-derivatives $\partial^{\vnu} f:I \to B$ are continuous for all $\vnu \in \cF$. We call the function $f$ Gevrey-$\delta$ if for each $y_0 \in I$ there exist an open set $J\subseteq I$ and $y_0 \in J$, and strictly positive constants $\vR = (R_1,R_2,\dots) \subset \mR^{\mN}$ and $C \in \mR$ that the derivatives of $f$ satisfy the bounds
	\begin{equation}\label{G-delta-def}
		\|\partial^{\vnu} f(\vy)\|_B \leq \frac{C}{\vR^{\vnu}} (|\vnu|!)^\delta, \qquad \forall \vy \in J, \quad \forall \vnu \in \cF.
	\end{equation}
	In this case we write $f \in G^\delta(U,B)$.
\end{definition}

\begin{remark}\label{rem:def-G-delta-is-new}
	For the case of finitely many parameters $\vy = (y_1,\dots,y_M)$ the definition of Gevrey-$\delta$ functions with values in $B = \mR$ or $\mC$ can e.g. be found in \cite[Section~1.4]{Rodino1993} as well as some further characterizations of this class in the case $M < \infty$. For the case $\delta=1$ and $M=\infty$, we refer to the rescaling argument from \cite[Section~2.2]{Gilbert2019}. However, to the best of authors' knowledge, characterizations of the type \eqref{G-delta-def} for $M = \infty$, likewise the related characterization  \eqref{abc tilder assumption} below (neither for $M<\infty$ nor $M = \infty$, or in the special case $\delta = 1$) have not been discussed  in the literature before.
\end{remark}

\begin{remark}
	Definition \ref{def:G-delta-def} is also suitable for the case of finitely many parameters. In particular, when $\vy = (y_1,\dots,y_M)$, $B = \mR$ or $\mC$ and $\delta=1$, the bound \eqref{G-delta-def} guarantees convergence of the power series of $f$ and therefore characterizes the class of analytic functions of $M$ variables, see e.g. in \cite[Section 2.2]{Krantz2002}. This requires the bound $|\vnu|! \leq M^{|\vnu|} \vnu!$ that is valid for a multiindex $\vnu$ with $M$ nonzero components.
\end{remark}

\begin{remark}
	In general, the estimate \eqref{G-delta-def} does not guarantee convergence of the power series. Indeed, in the case $\delta=1$ and countably many parameters \linebreak $\vy = (y_1,y_2,\dots)$, we need an additional assumption, for example, that the sequence $\vR = (R_1,R_2,\dots)$ grows such that $\sum_{i=1}^{\infty} R_i^{-1}<\infty$, cf. \cite{Gilbert2019}. To see this, observe that for all $\vy\in U$ from the neighbourhood \mbox{$\|\vy - \vy_0\|_\infty < r$} \eqref{G-delta-def} with $\delta = 1$ yields
\begin{align*}
	|f(\vy)|
	=
	\left|\sum_{\vnu\in \cF} \frac{\partial^{\vnu}f(\vy_0)}{\vnu !}(\vy-\vy_0)^{\vnu} \right|
	\leq
	C \sum_{\vnu\in \cF} \frac{\abs{\vnu}!}{\vnu !} \brac{\frac{r}{\vR}}^{\vnu}
	=
   \frac{C}{1-r\sum_{i=1}^{\infty} R_i^{-1}}, 
\end{align*}
where we have used the identity $\brac{1-\sum_{i=1}^\infty z_i}^{-1}=\sum_{\vnu\in \cF} \frac{\abs{\vnu}!}{\vnu !} \vz^{\vnu} $ in the last step.

\end{remark}

\section{Regularity assumptions and the formulation of the main result}\label{sec:AssumptionMainThm}

We are now ready to formulate the Gevrey-class regularity assumption on the coefficients $a, b, c$ of the eigenvalue problem \eqref{gen equation} which (as we prove in the following) will imply the Gevrey-class regularity of the eigenpairs with the same $\delta \geq 1$.

	\begin{assumption} \label{Assumption}
		For all values $\vy\in U$ and $\vnu \in \cF$, the coefficients $a(\vx,\vy), b(\vx,\vy)$ and $c(\vx,\vy)$ are of Gevrey class $G^\delta(U,L^{\infty}(D))$, that is there exist a sequence $\vR = (R_1, R_2, \dots)$ with positive components $R_i > 0$ and constants $\overline{a}, \overline{b}, \overline{c}$ independent of $\vnu$ such that
		\begin{equation}\label{abc assumption}
		\begin{split}
			\norm{\partial^{\vnu} a(\cdot,\vy)}{\infty}
			\leq
			\frac{\overline{a} }{2}
			\frac{(\abs{\vnu}!)^\delta  }{(2\vR)^{\vnu}}, 
\quad
			\norm{\partial^{\vnu} b(\cdot,\vy)}{\infty}
			\leq
			\frac{\overline{b} }{2}
			\frac{(\abs{\vnu}!)^\delta  }{(2\vR)^{\vnu}}, 
\quad
			\norm{\partial^{\vnu} c(\cdot,\vy)}{\infty}
			\leq
			\frac{\overline{c} }{2}
			\frac{(\abs{\vnu}!)^\delta  }{(2\vR)^{\vnu}}.
	    \end{split}
		\end{equation} 
	\end{assumption}
Notice that for $\vnu = 0$ assumption \eqref{abc assumption} agrees with the upper bounds in \eqref{abc-bounds}. Notice also that the components of $\vR$ are readily scaled by the factor of $2$. This leads to no loss of generality, but allows to shorten the forthcoming expressions. 
For example, in view of \eqref{multiindex-est-2} assumption \eqref{abc assumption} immediately implies
\begin{equation}\label{abc tilder assumption}
\max\bigg(
\frac{\norm{\partial^{\vnu} a(\cdot,\vy)}{\infty}}{\overline{a}},
\frac{\norm{\partial^{\vnu} b(\cdot,\vy)}{\infty}}{\overline{b}},
\frac{\norm{\partial^{\vnu} c(\cdot,\vy)}{\infty}}{\overline{c}}
\bigg) \leq \frac{\gdnota{\abs{\vnu}} }{\vR^{\vnu}(\abs{\vnu}!)^{1-\delta}}
\end{equation} for all $\vy \in U$ and $\vnu \in \cF$. 
{We remark that for better readability and where it is unambiguous we will write the term $(\abs{\vnu}!)^{1-\delta}$ in the denominator, as in \eqref{abc tilder assumption}. This will be particularly useful in Section \ref{sec: main result} below.}

The main result of this paper is given by the following theorem.

\begin{theorem}\label{gevrey regularity for eigenpairs for general equ}
	Let $\delta\geq 1$, $\vnu\in \cF$ and Assumption \ref{Assumption} be valid. There exist constants $C_\lambda, C_u>0$ and a sequence $\wtd \vR= (\wtd R_1, \wtd R_2, \dots)$ with positive components $\wtd R_i > 0$ such that for all $\vy\in U$ the derivative of the smallest eigenvalue of \eqref{variational transform} is bounded by
	\begin{align}\label{lambda bound}
		\abs{\partial^{\vnu}\lambda_1(\vy)}
		\leq
		C_{\lambda}
		\frac{(\abs{\vnu}!)^\delta}{ {\wtd\vR}^{\vnu}}		
	\end{align}
	and the norm of the derivative of the corresponding eigenfunction is bounded by
	\begin{align}\label{u V bound}
		\norm{\partial^{\vnu} \nabla u_1(\vy)}{L^2(D)}
		\leq
		C_u
		\frac{(\abs{\vnu}!)^\delta}{  {\wtd\vR}^{\vnu}}.
	\end{align}
\end{theorem}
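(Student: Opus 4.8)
The plan is to prove the bounds \eqref{lambda bound} and \eqref{u V bound} by induction on $|\vnu|$, using the falling-factorial scale $\gdnota{n}$ in place of $n!$ exactly as motivated in Section \ref{sec: Preliminary}. More precisely, in view of \eqref{multiindex-est-2} and the reformulation \eqref{abc tilder assumption}, it suffices to show that there are constants $C_\lambda, C_u$ and a sequence $\wtd\vR$ such that
\begin{equation*}
\abs{\partial^{\vnu}\lambda_1(\vy)} \leq C_\lambda \frac{\gdnota{|\vnu|}}{\wtd\vR^{\vnu}(|\vnu|!)^{1-\delta}}, \qquad \norm{\partial^{\vnu}\nabla u_1(\vy)}{L^2(D)} \leq C_u \frac{\gdnota{|\vnu|}}{\wtd\vR^{\vnu}(|\vnu|!)^{1-\delta}}
\end{equation*}
for all $\vy \in U$ and all $\vnu \in \cF$. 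First I would set up the variational formulation of \eqref{gen equation}, normalize the eigenfunction (e.g. $\int_D c(\cdot,\vy)u_1^2\,dx = 1$ or an $\vy$-independent normalization), and record from Section \ref{sec:EVPtheory} the uniform coercivity of the bilinear form, the uniform bounds on $\lambda_1(\vy)$, and crucially the uniform positive lower bound on the relative spectral gap — this is where boundedness of $U$ enters. Differentiating the variational identity $\int_D a\nabla u_1\cdot\nabla v + \int_D b u_1 v = \lambda_1 \int_D c u_1 v$ with respect to $\vy$ via the Leibniz rule gives, for each $\vnu$, an equation of the form $\mathcal{B}_{\vy}(\partial^{\vnu}u_1, v) = (\text{lower-order terms in }\partial^{\vm}u_1,\partial^{\vm}\lambda_1 \text{ with }\vm<\vnu) $, and testing against $u_1$ (using the eigenvalue equation to kill the top-order eigenfunction term) yields a recursion for $\partial^{\vnu}\lambda_1$; the spectral-gap bound then lets one control $\partial^{\vnu}u_1$ in $V$ from the data and the $\partial^{\vm}u_1$, $\partial^{\vm}\lambda_1$ with $\vm < \vnu$ (and $\vm=\vnu$ only through $\partial^{\vnu}\lambda_1$, already handled).

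The key step is the combinatorial bookkeeping in the induction. After applying Leibniz and the triangle inequality, each term on the right-hand side is a product of a binomial coefficient, a coefficient-derivative bound from \eqref{abc tilder assumption}, and one or two inductive factors of the form $\gdnota{|\vm|}/(\wtd\vR^{\vm}(|\vm|!)^{1-\delta})$. The Gevrey exponent is handled first and cheaply: using $(|\vm|!\,|\vnu-\vm|!)^{\delta-1} \leq (|\vnu|!)^{\delta-1}$ for the products, the factor $(|\vnu|!)^{\delta-1}$ factors out cleanly and the remaining estimate is \emph{identical in structure to the analytic case} $\delta=1$, as the authors advertise. What remains is to sum expressions like $\sum_{\vm\leq\vnu}\binom{\vnu}{\vm}\gdnota{|\vm|}\,\gdnota{|\vnu|-|\vm|}$ and its restricted variants; by grouping multiindices according to $|\vm|=i$ and using the Vandermonde-type identity $\sum_{|\vm|=i,\,\vm\leq\vnu}\binom{\vnu}{\vm} = \binom{|\vnu|}{i}$, these reduce to the one-dimensional sums in Lemma \ref{lem:ff-main-id}, which are bounded by $2\gdnota{|\vnu|}$, $3\gdnota{|\vnu|}$, $4\gdnota{|\vnu|}$ — with \emph{$|\vnu|$-independent} constants. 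This is precisely the gain over the naive $n!$ estimate \eqref{g2-argument2}, and it is what makes the induction close without degradation of the radii. The powers of $\vR$ are absorbed by choosing $\wtd\vR$ with $\wtd R_j$ a fixed fraction of $R_j$ (the factor-of-$2$ rescaling in \eqref{abc assumption} is designed for this), and the constants $C_\lambda, C_u$ are fixed at the start so that the induction step reproduces them, using the uniform spectral gap to close the loop between the $\lambda$-recursion and the $u$-recursion.

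I expect the main obstacle to be the intertwined nature of the two recursions — $\partial^{\vnu}\lambda_1$ depends on $\partial^{\vm}u_1$ for $\vm<\vnu$, while $\partial^{\vnu}u_1$ depends on $\partial^{\vnu}\lambda_1$ and on all $\partial^{\vm}u_1$, $\partial^{\vm}\lambda_1$ with $\vm<\vnu$ — so the induction hypothesis must be a simultaneous statement for both quantities, and the constants $C_\lambda$, $C_u$ must be chosen in the right order (first $C_\lambda$ in terms of the data and $C_u$, then $C_u$ in terms of $C_\lambda$ and the gap, checking the fixed-point inequality is satisfiable, e.g. by taking $\wtd\vR$ small enough so that the series contributions with $\vm=\vzero$ dominate and the rest is a small perturbation). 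A secondary technical point is the treatment of $\vnu$ with a component reduced by one versus the full Leibniz expansion (several references, e.g. \cite{Gilbert2019,GilbertScheichl}, differentiate only in a single distinguished direction $y_k$ to simplify the combinatorics), and the careful handling of the $b\equiv 0$-allowed and only-nonnegative lower bound on $b$ in \eqref{abc-bounds}; neither of these affects the structure of the argument but both need to be done cleanly. Finally, one must verify that the normalization of $u_1$ is compatible with differentiation — choosing a linear (hence $\vnu$-independent after one differentiation) normalization functional avoids a genuine nonlinearity there, leaving $g=u_1^2$-type quadratic nonlinearity only in the $\lambda$-equation, which is exactly the model case Section \ref{sec: Remedy} was built to handle.
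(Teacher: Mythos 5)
Your proposal captures the paper's strategy faithfully: induction on $|\vnu|$, working in the falling-factorial scale $\gdnota{n}$, factoring out $(|\vnu|!)^{\delta-1}$ early via $(|\vm|!\,|\vnu-\vm|!)^{\delta-1}\leq(|\vnu|!)^{\delta-1}$, reducing multiindex sums to one-dimensional ones via the Vandermonde-type identity (Lemma \ref{sum binomialcoef}), and then invoking Lemma \ref{lem:ff-main-id} to close the induction with $|\vnu|$-independent constants. You also correctly identify the coupling between the $\lambda$- and $u$-recursions and the role of the uniform spectral gap. The one place you genuinely diverge from the paper is the normalization: you propose a \emph{linear} ($\vy$-independent) normalization functional for $u_1$ precisely to sidestep a nonlinearity in the normalization constraint, whereas the paper keeps the natural $\vy$-dependent quadratic normalization $\cM_{\vy}(u_1,u_1)=1$ and pays for it in Lemma \ref{upper bound for eigenfunction 2nd}: one differentiates that constraint with the Leibniz rule to get a closed-form bound for the component $p_1 = \cM_{\vy}(\partial^{\vnu}u_1,u_1)\,u_1$ of the decomposition $\partial^{\vnu}u_1 = p_1 + p^\bot$, with $p^\bot$ handled by the coercivity of $\wtd\cA_{\vy}$ on $E(\lambda_1)^\bot$ (Lemma \ref{lem:coercive-type}). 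Your linear normalization would also work but requires a separate argument to relate the linear constraint $\langle\partial^{\vnu}u_1,\phi\rangle=0$ to the $\cM_{\vy}$-orthogonal projection that the coercivity lemma needs; the paper's choice keeps the biorthogonal decomposition aligned with the inner product in which coercivity is stated, and has the side benefit that $\lambda_1=\cA_{\vy}(u_1,u_1)$ exactly, which simplifies the test-against-$u_1$ step in Lemma \ref{lem:Eval-bnd}. A minor further difference: you suggest ``taking $\wtd\vR$ small enough so the $\vm=\vzero$ contributions dominate,'' but in the paper the $\vm=\vzero$ terms actually \emph{vanish} (by the variational identity and by $\cM_{\vy}$-orthogonality), which is cleaner than a smallness argument; the explicit constants $\sigma,\rho$ of Theorem \ref{gevrey regularity for eigenpairs for ref domain} are then determined by the contrast $\con_a,\con_c$ and the gap $\mu$, not by shrinking $\wtd\vR$ ad hoc.
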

\begin{proof}
Recall that $\norm{v}{V}= \chi_1^{-1/2}\norm{\nabla v}{L^2(D)}$. Then the statement follows directly from Theorem~\ref{gevrey regularity for eigenpairs for ref domain} from Section \ref{sec: main result} below by using $\gdnota{\abs{\vnu}} \leq |\vnu|!$ and the constants  {$C_\lambda=\overline{ \lambda_1}\sigma / \rho$, $C_u=\chi_1^{\frac{1}{2}} \overline{ u_1}\sigma / \rho$ and $\wtd R_i = R_i/ \rho$} with the scaling factors $\sigma,\rho \geq 1$ from Theorem~\ref{gevrey regularity for eigenpairs for ref domain}.
\end{proof}
\begin{remark}
{Theorem \ref{gevrey regularity for eigenpairs for general equ} implies that the eigenpairs share the same Gevrey-$\delta$ or analytic regularity with the general (non-affine) coefficient parametrizations. In particular, this regularity result determines the convergence rates of the Gauss-Legendre quadrature. In the forthcoming Section \ref{sec: Gauss-Legendre Quadrature} we demonstrate this argument for a particular EVP and provide numerical examples to support the theory. In the same spirit, Theorem \ref{gevrey regularity for eigenpairs for general equ} together with the forthcoming result in Lemma \ref{QMC theorem} also  guarantees  the convergence rate of the QMC method as in \eqref{errorQMC} and \eqref{QMC convergence rate}. We refer to Section \ref{sec: QMC} for the detailed theoretical argument and the numerical experiments.}
\end{remark}

\section{Elliptic eigenvalue problems with countably many parameters}\label{sec:EVPtheory}
In this section we collect the required notations and facts from the theory of variational eigenvalue problems. 
Throughout the paper, when it is unambiguous we will drop the $\vx$-dependence when referring to a function defined on $D$ at a parameter value $\vy$. For example, we will write $a(\vy) := a(\cdot, \vy)$ and analogously for the other coefficients and eigenfunctions.

For a fixed $\vy \in U$ the variational formulation of \eqref{gen equation} reads

\begin{equation}\label{variational transform}
	\chi_1^{-1}\int_D a(\vy)\nabla u(\vy)\, \nabla v
	+
	\int_D
	b(\vy)u(\vy)\, v
	=
	\lambda(\vy)
	\int_D
	c(\vy)\,u(\vy)\, v {,\qall v\in V},
\end{equation}
where $\chi_1$ is the smallest Dirichlet-Laplace eigenvalue defined in \eqref{def-chi} being related to the Poincar\'e constant. For the rescaled norm \eqref{def-V-norm} the Poincar\'e inequality reads
\begin{equation}\label{simpler Poincare iequ}
	\norm{v}{L^2(D)}\leq \chi_1^{-1/2} \norm{\nabla v}{L^2(D)}=\norm{v}{V}.
\end{equation}

For a fixed parameter value $\vy \in U$ we define the symmetric bilinear forms $\cA_{\vy}:V\times V\goto \mR$ and $\cM_{\vy}:L^2(D)\times L^2(D)\goto \mR$ as
\begin{equation}\label{AM-def}
	\cA_{\vy}(w,v)
	:=
	\chi_1^{-1}\int_{D} a(\vy)\nabla w \, \nabla v
	+\int_{D}
	b(\vy)w\, v, \qquad 
	\cM_{\vy}(w,v):=
	\int_{D}
	c(\vy)w\, v.
\end{equation}
In view of the bounds \eqref{abc-bounds}, $\cA_{\vy}$ and $\cM_{\vy}$ are inner products on $V$ and $L^2(D)$, satisfying
\begin{equation}\label{AM-bounds}
\begin{split}
\underline{a}\|v\|_{V}^2 \leq \cA_{\vy} (v,v)
\leq 
\frac{\overline{a}+\overline{b}}{2}\|v\|_{V}^2,& \qquad v \in V, \\
\underline{c}\| v\|_{L^2(D)}^2 \leq \cM_{ \vy} (  v, v)
\leq 
\frac{\overline{c}}{2} \| v\|_{L^2(D)}^2, &\qquad v \in L^2(D).
\end{split}
\end{equation} 
Thus, for a fixed $\vy\in U$, the variational {counterpart} of \eqref{gen equation} is the problem of finding a nontrivial $u(\vy)\in V$ and $\lambda(\vy)\in \mR$ such that
\begin{equation}\label{eigen prob}
\begin{split}
\cA_{\vy}(u(\vy),v)
&=
\lambda(\vy)
\cM_{\vy}(u(\vy),v),
\quad \forall v\in V,\\
\norm{u(\vy)}{\cM_{\vy}}&=1,
\end{split}
\end{equation}
where the eigenvectors are normalized with respect to the norm $\norm{ v}{\cM_{\vy}}=\sqrt{\cM_{\vy}(v,v)}$. 
Similarly as the Dirichlet-Laplacian, for a fixed $\vy\in U$ the formulation \eqref{eigen prob} has countably-many eigenvalues $\{\lambda_k(\vy)\}_{k\in \mN}$, which are positive, have finite multiplicity and accumulate only at infinity, the smallest eigenvalue is simple, cf. \cite{GilbargTrudinger98,Davies1995,Henrot2006}. Counting multiplicities, we enumerate them in the non-decreasing order as
\begin{align*}
	0< \lambda_1(\vy)
	<
	\lambda_2(\vy)
	\leq
	\lambda_3(\vy)
	\leq \dots
\end{align*}
with $\lambda_k(\vy)\goto \infty$ as $k\goto \infty$. For an eigenvalue $\lambda_k(\vy)$, we define its eigenspace to be
\begin{equation*}
	E(\lambda_k(\vy))
	:=
	\spann \sett{u_k\, : \, u_k \text{ is an eigenfunction corresponding to } \lambda_k(\vy)}.
\end{equation*} 
Since $\lambda_1$ is simple, $E(\lambda_1(\vy))$ is one-dimensional and $u_k(\vy)$ belong to the orthogonal complement
\begin{equation*}
E(\lambda_1(\vy))^{\bot}:=\big\{v~:~\cM_{ \vy}(v,u_1(\vy))=0\big\}.
\end{equation*} 
The min-max principle and \eqref{AM-bounds} imply uniform upper bounds for the smallest eigenvalue
\begin{equation}
	\lambda_1(\vy)
	=
	\inf_{v \in V} 
	\frac{\cA_{\vy}(v,v)}{\cM_{\vy}(v,v)}
	\leq
	\frac{\overline{a}+\overline{b}}{2\,\underline{c}}
	=:
	\overline{\lambda_1} \label{lambda_k bound}
\end{equation}
and the corresponding eigenfunction
\begin{equation}
	\norm{u_1(\vy)}{V}
	\leq
	\sqrt{\frac{\lambda_1(\vy)}{\underline{a}}}
	\leq
	\sqrt{\frac{\overline{a}+\overline{b}}{2\,\underline{c}\, \underline{a}}}
	=:
	\overline{u_1}.\label{u_k bound}
\end{equation}

\begin{remark}

The assumption that $b(\vy) \geq 0$ in \eqref{abc-bounds} is made without loss of generality since any EVP with $b < 0$, but satisfying the rest of Assumption \ref{Assumption}, can be shifted to an equivalent problem with a non-negative coefficient ``new $b$'' by adding {the term} $\sigma \cdot c(\vy) \cdot u(\vy)$ to the both sides of \eqref{gen equation}, where $\sigma$ is chosen such that $-b(\vy) \leq \sigma \cdot c(\vy)$ for all $\vx, \vy$. Such $\sigma$ exists due to Assumption \ref{Assumption}. The eigenvalues of the original EVP will become the eigenvalues of the shifted problem shifted by $-\sigma$, and the corresponding eigenspaces are unchanged. This has been used previously e.g. in \cite{Giani2011, Gilbert2019}.

\end{remark}

We denote by $\mu$ the uniform lower bound of the \emph{relative spectral gap} $1-\frac{\lambda_1(\vy)}{\lambda_2(\vy)}$ over all $\vy \in U$ 
 \begin{equation}\label{def-mu}
 \mu:=\min_{\vy\in U} \brac{1-\frac{\lambda_1(\vy)}{\lambda_2(\vy)}}.
 \end{equation}
{The Krein-Rutman theorem guarantees that the smallest eigenvalue is simple, i.e. \mbox{$0 < \lambda_1(\vy)<\lambda_2(\vy)$} for all $\vy\in U$. Moreover, Lemma \ref{lem:EV-continuous} below guarantees that the eigenvalues $\lambda_k(\vy)$ are continuous in $\vy$, and therefore the \emph{relative spectral gap} $1-\frac{\lambda_1(\vy)}{\lambda_2(\vy)}$ is also continuous.} Since $U$ is a bounded domain, the minimum is attained, cf. \cite[Proposition~1]{CuiDeSterckGilbertPolishchukScheichl2023}, and therefore $\mu$ is strictly positive, see also \cite{Andreev2012} and \cite{Gilbert2019}. Since the eigenvalues are positive, this implies $0 < \mu < 1$. {
\begin{lemma} \label{lem:EV-continuous}
Suppose that the coefficients $a(\vy)$, $b(\vy)$, $c(\vy)$ are continuous at some parameter value $\vz$, i.e.
$\forall \varepsilon>0$ $\exists d >0$ such that $\|\vy - \vz\| < d$ 
implies 
\begin{equation}\label{a b c continuous}
\norm{a(\vy)-a(\vz)}{\infty}+\norm{b(\vy)-b(\vz)}{\infty}+\norm{c(\vy)-c(\vz)}{\infty}
<
\varepsilon.
\end{equation}
Then the $k$-th eigenvalue $\lambda_k(\vy)$ is continuous at $\vy = \vz$, i.e.
\begin{equation}\label{lambda continuous}
\forall  \varepsilon_\lambda>0 \quad \exists d_{\lambda} >0: \quad \|\vy - \vz\| < d_{\lambda}
\quad \Rightarrow \quad |\lambda(\vy) - \lambda(\vz)| < \varepsilon_\lambda.
\end{equation}
\end{lemma}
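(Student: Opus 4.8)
The plan is to derive the continuity of the eigenvalues from the continuity of the coefficients via a perturbation argument based on the min-max (Courant--Fischer) characterization. First I would record how a perturbation of the coefficients perturbs the bilinear forms $\cA_{\vy}$ and $\cM_{\vy}$. Indeed, for $v \in V$,
\begin{equation*}
  |\cA_{\vy}(v,v) - \cA_{\vz}(v,v)| \leq \chi_1^{-1}\norm{a(\vy)-a(\vz)}{\infty}\norm{\nabla v}{L^2(D)}^2 + \norm{b(\vy)-b(\vz)}{\infty}\norm{v}{L^2(D)}^2,
\end{equation*}
and using the Poincar\'e inequality \eqref{simpler Poincare iequ} together with $\norm{v}{V}^2 = \chi_1^{-1}\norm{\nabla v}{L^2(D)}^2$ this is bounded by $\big(\norm{a(\vy)-a(\vz)}{\infty} + \norm{b(\vy)-b(\vz)}{\infty}\big)\norm{v}{V}^2$. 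Likewise $|\cM_{\vy}(v,v) - \cM_{\vz}(v,v)| \leq \norm{c(\vy)-c(\vz)}{\infty}\norm{v}{V}^2$. Hence if \eqref{a b c continuous} holds with a small $\varepsilon$, then for every $v \in V$,
\begin{equation*}
  |\cA_{\vy}(v,v) - \cA_{\vz}(v,v)| \leq \varepsilon \norm{v}{V}^2, \qquad |\cM_{\vy}(v,v) - \cM_{\vz}(v,v)| \leq \varepsilon \norm{v}{V}^2 .
\end{equation*}

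Next I would feed these two-sided estimates into the min-max principle $\lambda_k(\vy) = \min_{\dim S = k}\max_{0 \neq v \in S} \cA_{\vy}(v,v)/\cM_{\vy}(v,v)$, where the minimum runs over $k$-dimensional subspaces $S \subset V$. Fix a subspace $S$. For $0 \neq v \in S$ we have $\cM_{\vz}(v,v) \geq \underline c\,\norm{v}{L^2(D)}^2$; combining this with the Poincar\'e inequality would relate $\norm{v}{V}^2$ to $\cM_{\vz}(v,v)$ only up to the (unbounded) ratio $\norm{v}{V}^2/\norm{v}{L^2(D)}^2$, so a cleaner route is to note $\cM_{\vz}(v,v) \geq \underline c\, \chi_1 \norm{v}{V}^2/\chi_1 = \dots$ — more precisely, since $\cM_{\vz}$ is comparable to $\norm{\cdot}{L^2(D)}$ and not to $\norm{\cdot}{V}$, I would instead bound the perturbation of the Rayleigh quotient as follows. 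For $\varepsilon$ small enough that $\varepsilon\, \chi_1^{-1}\underline c^{-1}\overline\lambda_1 < \tfrac12$ (say), write
\begin{equation*}
  \frac{\cA_{\vy}(v,v)}{\cM_{\vy}(v,v)} \leq \frac{\cA_{\vz}(v,v) + \varepsilon\norm{v}{V}^2}{\cM_{\vz}(v,v) - \varepsilon\norm{v}{V}^2}
\end{equation*}
and then bound $\norm{v}{V}^2$ in terms of $\cM_{\vz}(v,v)$ on the subspace realizing $\lambda_k(\vz)$: on that specific optimal $S$, every $v \in S$ satisfies $\cA_{\vz}(v,v) \leq \lambda_k(\vz)\cM_{\vz}(v,v)$ and $\underline a \norm{v}{V}^2 \leq \cA_{\vz}(v,v)$ by \eqref{AM-bounds}, hence $\norm{v}{V}^2 \leq \underline a^{-1}\lambda_k(\vz)\cM_{\vz}(v,v)$. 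Substituting, the perturbed Rayleigh quotient on $S$ is at most $\lambda_k(\vz)(1 + c_1\varepsilon)/(1 - c_2\varepsilon)$ with $c_1,c_2$ depending only on $\underline a, \lambda_k(\vz)$; taking the max over $v\in S$ and then using $S$ as a competitor in the min-max for $\lambda_k(\vy)$ gives $\lambda_k(\vy) \leq \lambda_k(\vz)(1+c_1\varepsilon)/(1-c_2\varepsilon)$. Exchanging the roles of $\vy$ and $\vz$ yields the reverse inequality, and together these give $|\lambda_k(\vy) - \lambda_k(\vz)| \leq C_k\,\varepsilon$ for $\varepsilon$ small, with $C_k$ depending only on $\underline a$, $\underline c$, $\chi_1$ and $\lambda_k(\vz)$. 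Given $\varepsilon_\lambda > 0$, choosing $\varepsilon = \min(\varepsilon_\lambda/C_k,\ \text{threshold})$ and then $d_\lambda = d$ from \eqref{a b c continuous} for that $\varepsilon$ establishes \eqref{lambda continuous}.

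The main obstacle is the mismatch of norms: the perturbation of $\cA_{\vy}$ and $\cM_{\vy}$ is naturally controlled in the $V$-norm (or $L^2$-norm), while the quantities appearing in the Rayleigh quotient, $\cA_{\vz}(v,v)$ and $\cM_{\vz}(v,v)$, are not uniformly equivalent to $\norm{v}{V}^2$ — in particular $\cM_{\vz}$ sits at the $L^2$ level. The way around it, carried out above, is to exploit that on the \emph{optimal} $k$-dimensional subspace for $\lambda_k(\vz)$ one has the a priori bound $\norm{v}{V}^2 \leq \underline a^{-1}\lambda_k(\vz)\,\cM_{\vz}(v,v)$, which converts the $V$-norm perturbation bound into a relative (multiplicative) perturbation of the Rayleigh quotient with constants depending on $k$ through $\lambda_k(\vz)$ only. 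Everything else is a routine $\varepsilon$--$\delta$ bookkeeping argument. Note that the statement is asserted for a fixed $k$ (the lemma writes $\lambda$ for $\lambda_k$), so no uniformity in $k$ is needed, which is essential since $C_k$ grows with $\lambda_k(\vz)$.
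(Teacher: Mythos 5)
Your proposal is correct and follows essentially the same min-max perturbation argument as the paper. The only noteworthy difference is that the paper keeps the $\cM$-perturbation in the $L^2$-norm and uses the pointwise bounds $\|v\|_V^2\leq\underline a^{-1}\cA_{\vz}(v,v)$, $\|v\|_{L^2(D)}^2\leq\underline c^{-1}\cM_{\vz}(v,v)$ to factor the perturbed Rayleigh quotient directly (yielding constants independent of $\lambda_k(\vz)$ until the very last step), whereas you convert both perturbations to the $V$-norm and relate $\|v\|_V^2$ to $\cM_{\vz}(v,v)$ only on the optimal subspace, which introduces a $\lambda_k(\vz)$-dependence a bit earlier but is equally valid for a fixed $k$.
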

\begin{proof} Let $\varepsilon_\lambda>0$ be arbitrary. We will select $\varepsilon$ and thereby $d_{\lambda}:=d$ as a function of $\varepsilon_\lambda$ so that \eqref{lambda continuous} holds.
Observe that estimates \eqref{simpler Poincare iequ}, \eqref{AM-bounds}, and \eqref{a b c continuous} imply for all $v\in V$ 
\begin{align}\label{continuity AM}
\abs{\cA_{\vy}(v,v)-\cA_{\vz}(v,v)}< \varepsilon\norm{v}{V}^2,
\quad
\abs{\cM_{\vy}(v,v)-\cM_{\vz}(v,v)}<\varepsilon\norm{v}{L^2(D)}^2,
\end{align}
and
\begin{align}\label{quotients bound}
 \norm{v}{V}^2  \leq\underline{a}^{-1}\cA_{\vy}(v,v), \qquad
 \norm{v}{L^2(D)}^2 \leq\underline{c}^{-1}\cM_{\vy}(v,v).
\end{align}
Without loss of generality, we assume $\lambda_k(\vy)\geq\lambda_k(\vz)$ and that $\varepsilon$ is sufficiently small such that $\varepsilon\leq \frac{\underline{c}}{2}$. Recalling the min-max principle (e.g. \cite[eq (2.8)]{Babuska1989}), we get
\begin{align*} 
\lambda_k(\vy)
&=
\min_{S_k\subset V\atop \dim S_k = k}
\max_{v\in S_k \atop v\neq 0 } 
\frac{\cA_{\vy}(v,v)}{\cM_{\vy}(v,v)}
\leq
\min_{S_k\subset V\atop \dim S_k = k}
\max_{v\in S_k \atop v\neq 0 } 
\frac{\cA_{\vz}(v,v)+\abs{\cA_{\vy}(v,v)-\cA_{\vz}(v,v)}}{\cM_{\vz}(v,v)-\abs{(\cM_{\vy}(v,v)-\cM_{\vz}(v,v)}}\\
&<
\min_{S_k\subset V\atop \dim S_k = k}
\max_{v\in S_k \atop v\neq 0 } 
\frac{\cA_{\vz}(v,v)+\varepsilon\norm{v}{V}^2}{\cM_{\vz}(v,v)-\varepsilon\norm{v}{L^2(D)}^2}
\leq
\min_{S_k\subset V\atop \dim S_k = k}
\max_{v\in S_k \atop v\neq 0 } 
\frac{\cA_{\vz}(v,v)}{\cM_{\vz}(v,v)}
\brac{
\frac{1+\underline{a}^{-1}\varepsilon}{1-\underline{c}^{-1}\varepsilon}}\\
&=
\brac{
\frac{1+\underline{a}^{-1}\varepsilon}{1-\underline{c}^{-1}\varepsilon}}
\lambda_k(\vz)
\leq
\lambda_k(\vz)+2\varepsilon\brac{\underline{a}^{-1}+\underline{c}^{-1}}\lambda_k(\vz).
\end{align*}
Observe that since $\varepsilon\leq \frac{\underline{c}}{2}$, the denominator $\cM_{\vzeta}(v,v)-\varepsilon\norm{v}{L^2(D)}^2$ is strictly positive, and $1-\underline{c}^{-1}\varepsilon\geq \frac{1}{2}$. 
Moreover, $\lambda_k(\vz)$ is uniformly bounded in $U$. Indeed, the min-max principle and \eqref{AM-bounds} imply for all 
$\vz\in U$ that
\begin{align*}
\lambda_k(\vz)
=
\min_{S_k\subset V\atop \dim S_k = k}
\max_{v\in S_k \atop v\neq 0 } 
\frac{\cA_{\vz}(v,v)}{\cM_{\vz}(v,v)}
\leq
\brac{
\frac{\overline{a}+\overline{b}}{2\underline{c}}}
\min_{S_k\subset V\atop \dim S_k = k}
\max_{v\in S_k \atop v\neq 0 } 
\frac{\norm{v}{V}^2}{\norm{v}{L^2(D)}^2}
=:\overline{ \lambda_k}.
\end{align*}
Notice that for all $k\in \mN$ the constants $\overline{\lambda_k}$ are independent of $\vz\in U$. We select $\varepsilon$ in \eqref{a b c continuous} as $\varepsilon=\min\sett{(2\brac{\underline{a}^{-1}+\underline{c}^{-1}}\overline{ \lambda_k})^{-1}\varepsilon_\lambda,\frac{\underline{c}}{2}}$. Thus, there exists $d_{\lambda}>0$ satisfying \eqref{lambda continuous}. This completes the proof.
\end{proof}
}

The following coercive-type estimate is required in order to control the norm of the $\vy$-derivatives of the eigenfunction $u_1(\vy)$.  The proof essentially follows the lines of \cite[Lemma 3.1]{Gilbert2019}, but covers the case of a parameter-dependent coefficient~$c=c(\vy)$.
\begin{lemma}\label{lem:coercive-type}
	Let $\mu$ be the relative spectral gap defined in \eqref{def-mu} and the coefficients $a$, $b$, $c$ satisfy the bounds \eqref{abc-bounds}. The shifted bilinear form $\wtd \cA_{\vy} := \cA_{\vy} - \lambda_1(\vy)\cM_{\vy}$	
is uniformly coercive on the orthogonal complement to $u_1(\vy)$, that is	
	\begin{equation*}
\cA_{\vy}(v,v)
		-
		\lambda_1(\vy)\cM_{\vy}(v,v)\geq
		C_{\mu} \norm{v}{V}^2,
		\qquad \forall v\in E(\lambda_1(\vy))^{\bot}
	\end{equation*}
with the coercivity constant 	$C_{\mu}= \underline{a}\,\mu$ being independent of $\vy$.
\end{lemma}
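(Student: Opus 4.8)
The plan is to expand $v\in E(\lambda_1(\vy))^{\bot}$ in the $\cM_{\vy}$-orthonormal basis of eigenfunctions $\{u_k(\vy)\}_{k\geq 1}$ of the problem \eqref{eigen prob} and exploit the fact that $v$ has no component along $u_1(\vy)$. First I would write $v = \sum_{k\geq 2} c_k u_k(\vy)$, noting that the eigenfunctions can be chosen to form a complete $\cM_{\vy}$-orthonormal system in $L^2(D)$ and that $\cA_{\vy}(u_j,u_k) = \lambda_j(\vy)\,\delta_{jk}$ in this basis. Then
\begin{equation*}
\cA_{\vy}(v,v) - \lambda_1(\vy)\cM_{\vy}(v,v)
= \sum_{k\geq 2}\bigl(\lambda_k(\vy) - \lambda_1(\vy)\bigr) c_k^2
= \sum_{k\geq 2}\lambda_k(\vy)\Bigl(1 - \tfrac{\lambda_1(\vy)}{\lambda_k(\vy)}\Bigr) c_k^2.
\end{equation*}

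Next, since $\lambda_k(\vy)\geq \lambda_2(\vy)$ for all $k\geq 2$, the function $t\mapsto 1-\lambda_1(\vy)/t$ is increasing, so $1-\lambda_1(\vy)/\lambda_k(\vy)\geq 1-\lambda_1(\vy)/\lambda_2(\vy)\geq \mu$ by the definition \eqref{def-mu} of the relative spectral gap. Hence
\begin{equation*}
\cA_{\vy}(v,v) - \lambda_1(\vy)\cM_{\vy}(v,v)
\geq \mu \sum_{k\geq 2}\lambda_k(\vy)\, c_k^2
= \mu \sum_{k\geq 2}\cA_{\vy}(u_k,u_k)\,c_k^2 = \mu\,\cA_{\vy}(v,v).
\end{equation*}
Finally I would invoke the lower bound $\cA_{\vy}(v,v)\geq \underline{a}\,\|v\|_V^2$ from \eqref{AM-bounds}, which yields the claim with $C_\mu = \underline{a}\,\mu$, a constant independent of $\vy$ because both $\underline{a}$ and $\mu$ are.

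The only genuine subtlety, and the step I would be most careful about, is the justification of the spectral expansion for $v$ in the parameter-dependent norms: one needs that $\{u_k(\vy)\}_{k\geq 1}$ is an $\cM_{\vy}$-orthonormal Hilbert basis of $L^2(D)$ and that $v\in V\subset L^2(D)$ with $\cM_{\vy}(v,u_1(\vy))=0$ indeed has the stated expansion with the series for $\cA_{\vy}(v,v)$ converging. This follows from the standard spectral theory of the compact, self-adjoint solution operator associated with the coercive pair $(\cA_{\vy},\cM_{\vy})$ — exactly the properties summarised in Section \ref{sec:EVPtheory} — so it only needs to be cited rather than reproved; everything else is the elementary two-line monotonicity argument above, mirroring \cite[Lemma 3.1]{Gilbert2019} with the obvious modification that $c=c(\vy)$ enters through $\cM_{\vy}$ but does not affect the structure of the proof.
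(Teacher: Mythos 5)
Your proof is correct and follows essentially the same route as the paper: expand $v$ in the $\cM_{\vy}$-orthonormal eigenbasis, drop the $u_1$-component, use $\lambda_k \geq \lambda_2$ to pull out the factor $1-\lambda_1/\lambda_2 \geq \mu$, and finish with the coercivity bound $\cA_{\vy}(v,v)\geq \underline{a}\|v\|_V^2$. The only cosmetic difference is that you work with scalar coefficients $c_k$ while the paper writes $v_k = \cM_{\vy}(v,u_k)u_k$; the manipulations are identical.
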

\begin{proof}
The eigenfunctions $\{u_k(\vy)\}_{k\in \mN}$ form a basis in $V$, that is orthonormal with respect to the inner product $\cM_{\vy}$. Therefore $v\in E(\lambda_1(\vy))^\bot$ admits the representation
\begin{align*}
v(\vy)=\sum_{k=2}^\infty {v_k(\vy)},
\end{align*}
where $v_k(\vy):=\cM_{\vy}(v,u_k(\vy)) u_k(\vy)$.
Notice that $v_1(\vy)=0$ since $v\in E(\lambda_1(\vy))^\bot$. We will suppress the dependence of the $v$ and $v_k$ on $\vy$ until the end of this proof for brevity. For $v\in E(\lambda_1(\vy))^\bot$, we have
\begin{align*}
\wtd \cA_{\vy}(v,v)
=
\sum_{k=2}^\infty
\sum_{j=2}^\infty
\big(
\cA_{\vy}\brac{v_k, v_j}-\lambda_1(\vy) \cM_{\vy}(v_k,v_j)
\big)
\end{align*}
Since all $v_k$ are just scaled versions of $u_k$, they also satisfy the variational equation \eqref{variational transform}, so that
$\cA_{\vy}(v_k,v_j)=\lambda_k\cM_{\vy}(v_k,v_j)$ 
and they are orthogonal with respect to $\cM_{\vy}(\cdot,\cdot)$. This implies that $\cA_{\vy}(v_k,v_j)=0$ for $k\neq j$. Thus, the above double sum is reduced to
\begin{align*}
\wtd \cA_{\vy}(v,v)
&=
\sum_{k=2}^\infty
\brac{
\cA_{\vy}\brac{v_k, v_k}-\frac{\lambda_1(\vy)}{\lambda_k(\vy)}\cA_{\vy}\brac{v_k, v_k}
}\\
&\geq
\brac{1-\frac{\lambda_1(\vy)}{\lambda_2(\vy)}}
\sum_{k=2}^\infty
\cA_{\vy}\brac{v_k, v_k}\\
&=
\brac{1-\frac{\lambda_1(\vy)}{\lambda_2(\vy)}}
\cA_{\vy}\brac{v, v}
\geq
\underline{a}\, \mu
\norm{v}{V}^2
=
C_{\mu}
\norm{v}{V}^2.
\end{align*}
This finishes the proof.
\end{proof}
\section{Parametric regularity: Proof of the main result}\label{sec: main result}
The aim of this section is the proof of the main  regularity result. As we will see, the scaling of the constants in the estimates for the $\vnu$-th derivative of the eigenpairs will \emph{solely depend on the contrast} of the coefficients of the eigenvalue problem \eqref{gen equation} defined as follows, cf. \eqref{AM-bounds}
\begin{equation}\label{def-contrast}
\con_a := 
 \frac{\overline{a} + \overline{b}}{2 \underline{a}},\qquad
\con_c := \frac{\overline{c}}{2 \underline{c}}.
\end{equation}
Notice that $\con_a, \con_c \geq 1$ and the following particular relations of dimensionless quantities
\begin{equation}\label{rel-contrast}
\frac{\overline{u_1}^2\overline{c} }{2} = \con_a \con_c = \frac{\overline{\lambda_1}\overline{c}}{2 \underline{a}},
\qquad
\frac{\overline{a} + \overline{b}  + \overline{\lambda_1}\, \overline{c}}{2\underline{a}} = \con_a (1 + \con_c).
\end{equation}

\begin{theorem}\label{gevrey regularity for eigenpairs for ref domain}
	Let $\delta\geq 1$, $\vnu\in \cF$, $|\vnu|\geq 1$ and $\vR = (R_1,R_2,\dots)$ be defined as in Assumption \ref{Assumption}. Then there exist $\sigma, \rho\geq 1$ independent of $\vnu$ such that the smallest eigenvalue $\lambda_1$ and the corresponding eigenfunction $u_1$ of \eqref{eigen prob} admit the bounds
{
	\begin{equation}\label{lambda bound ref}
		\abs{ \partial^{\vnu} \lambda_1( \vy)}
		\leq
\frac{\overline{\lambda_1}{\sigma}}{\rho} 
\left(\frac{ \rho}{\vR} \right)^{\vnu}  \frac{\gdnota{\abs{\vnu}}}{(\abs{\vnu}!)^{1-\delta}} 
	\end{equation}
	and
	\begin{equation}\label{u V bound ref}
		\norm{ \partial^{\vnu}  u_1(\vy)}{ V}
		\leq
\frac{\overline{u_1}{\sigma}}{\rho} 
\left(\frac{ \rho}{\vR} \right)^{\vnu}  \frac{\gdnota{\abs{\vnu}}}{(\abs{\vnu}!)^{1-\delta}}.
	\end{equation}
The scaling factors $\sigma$, $\rho$ depend solely on the contrast of the coefficients \eqref{def-contrast} and are defined as 
\begin{equation}\label{sigma rho def}
\begin{array}{ll}
	\sigma:=
		\mu^{-1}\sigma_1 + \con_a \con_c,&\quad 
\sigma_1
	:=
	2 \con_a (1+\con_c),
	\\[2ex]
	\rho:=
	\mu^{-1}\rho_1 + {(3+8\sigma)}\con_a \con_c,&\quad
\rho_1
	:=
	3 \sigma_1 + 16{\sigma} \con_a \con_c.
\end{array}
\end{equation}}
\end{theorem}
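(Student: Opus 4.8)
The plan is to prove the bounds \eqref{lambda bound ref} and \eqref{u V bound ref} simultaneously by strong induction on $n = |\vnu|$, using the \emph{alternative-to-factorial} quantities $\gdnota{|\vnu|}$ in place of $|\vnu|!$ throughout, exactly as motivated in Section \ref{sec: Remedy}. The base case $n=0$ is the a priori bound: $|\lambda_1(\vy)| \leq \overline{\lambda_1}$ and $\|u_1(\vy)\|_V \leq \overline{u_1}$ from \eqref{lambda_k bound}--\eqref{u_k bound}, which is consistent with the claimed estimates since $\sigma,\rho \geq 1$. For the inductive step, fix $\vnu$ with $|\vnu| = n \geq 1$ and assume the bounds hold for all $\vm \lneq \vnu$. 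Apply $\partial^{\vnu}$ to the variational identity \eqref{eigen prob} and to the normalisation $\cM_{\vy}(u_1(\vy),u_1(\vy)) = 1$, expand using the Leibniz rule. This produces, after isolating the top-order terms, an expression for $\partial^{\vnu}\lambda_1$ in terms of lower-order derivatives of $\lambda_1$, $u_1$ and the coefficients, and a variational equation for $\partial^{\vnu} u_1$ of the form $\wtd\cA_{\vy}(\partial^{\vnu} u_1, v) = (\text{lower-order data})(v)$, where the right-hand side involves the derivatives $\partial^{\vm} a, \partial^{\vm} b, \partial^{\vm} c$ for $\vm \leq \vnu$ (bounded via \eqref{abc assumption}/\eqref{abc tilder assumption}), the lower-order $\partial^{\vm} u_1$, $\partial^{\vm}\lambda_1$ (bounded by induction), and the term $\partial^{\vnu} u_1$ itself only through components handled by projection.

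The two key structural tools are: (i) Lemma \ref{lem:coercive-type}, which gives coercivity of $\wtd\cA_{\vy}$ on $E(\lambda_1(\vy))^\bot$ with constant $C_\mu = \underline{a}\mu$, so that the component of $\partial^{\vnu} u_1$ in the orthogonal complement is controlled in $V$-norm by the dual norm of the right-hand side data; and (ii) differentiating the normalisation condition to control the component of $\partial^{\vnu} u_1$ along $u_1$ itself — this piece is a sum of products of strictly-lower-order derivatives of $u_1$ and of $c$, hence bounded by induction. Combining (i) and (ii) gives the full $V$-bound on $\partial^{\vnu} u_1$, and substituting back into the formula for $\partial^{\vnu}\lambda_1$ gives its bound. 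Throughout, the Leibniz sums over $\vm$ are split into the extreme terms ($\vm = \vzero$ or $\vm = \vnu$, which carry full-order derivatives and must be balanced on the left) and the interior terms ($\vzero < \vm < \vnu$), and the interior sums are estimated using the combinatorial identities of Lemma \ref{lem:ff-main-id}: each convolution $\sum \binom{\vnu}{\vm}\gdnota{|\vm|}\gdnota{|\vnu-\vm|}$ collapses to a \emph{constant} multiple of $\gdnota{|\vnu|}$ (not an $n$-dependent multiple), which is precisely what makes the induction close with a fixed $\rho$. The Gevrey exponent $\delta > 1$ is handled by the elementary inequality $(|\vm|!\,|\vnu-\vm|!)^{\delta-1} \leq (|\vnu|!)^{\delta-1}$ mentioned in the introduction, which lets the factor $(|\vnu|!)^{\delta-1}$ be pulled out of every interior sum, so the Gevrey case costs nothing beyond the analytic one.

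The bookkeeping that produces the explicit constants $\sigma_1, \rho_1, \sigma, \rho$ in \eqref{sigma rho def} is where the care lies: one tracks how the contrast ratios $\con_a = (\overline a + \overline b)/(2\underline a)$ and $\con_c = \overline c/(2\underline c)$ enter through \eqref{AM-bounds} and the a priori bounds, how $\mu^{-1}$ enters through the coercivity constant $C_\mu = \underline a\mu$, and how the factors $2, 3, 4$ from Lemma \ref{lem:ff-main-id} accumulate. The relations \eqref{rel-contrast} are used to rewrite products like $\overline{u_1}^2\overline c/2$ and $(\overline a + \overline b + \overline{\lambda_1}\overline c)/(2\underline a)$ purely in terms of $\con_a, \con_c$, which is what allows the final constants to depend \emph{solely on the contrast}. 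One must also verify that the induction hypothesis is applied with the geometric factor $(\rho/\vR)^{\vm}$ and that, after summation, the emerging factor is bounded by $(\rho/\vR)^{\vnu}$ — this works because $\rho \geq 1$ and the multi-index powers combine as $\vR^{-\vm}\vR^{-(\vnu-\vm)} = \vR^{-\vnu}$, with the extra powers of $\rho$ from the two factors ($\rho^{|\vm|}$ and $\rho^{|\vnu-\vm|}$, i.e. $\rho^{|\vnu|}$ total, versus $\rho^{|\vnu|-1}$ on the left times one power of $\rho$ to spare) providing exactly the slack needed to absorb the constants into $\rho$.

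The main obstacle I anticipate is the simultaneous/coupled nature of the induction: the bound for $\partial^{\vnu}\lambda_1$ needs the bound for $\partial^{\vnu} u_1$ (through the term where all derivatives hit $u_1$), while the variational equation for $\partial^{\vnu} u_1$ in turn contains $\partial^{\vnu}\lambda_1 \cdot \cM_{\vy}(u_1,\cdot)$ — but this latter term is \emph{orthogonal} to $E(\lambda_1(\vy))^\bot$, so it drops out when testing against the orthogonal component and only affects the $u_1$-component, which is pinned down independently by the normalisation. Disentangling this circular dependence correctly — establishing first the orthogonal part of $\partial^{\vnu} u_1$ via coercivity, then the parallel part via normalisation, then $\partial^{\vnu}\lambda_1$, and checking that at each stage only genuinely lower-order quantities (plus already-established same-order pieces) are invoked — is the delicate heart of the argument, and getting the constants to match \eqref{sigma rho def} exactly requires keeping the three estimates in lockstep with the right induction loading.
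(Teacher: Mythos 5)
Your proposal correctly identifies the overall architecture of the paper's argument: decompose $\partial^{\vnu} u_1 = p_1 + p^\bot$ into the component parallel to $u_1$ and the component in $E(\lambda_1(\vy))^\bot$, control $p^\bot$ through the coercivity of $\wtd\cA_{\vy}$ on the orthogonal complement (Lemma~\ref{lem:coercive-type}), control $p_1$ through differentiation of the normalisation $\cM_{\vy}(u_1,u_1)=1$, close the recursion with the falling-factorial convolution identities of Lemma~\ref{lem:ff-main-id}/\ref{appendix lem}, use $(|\vm|!\,|\vnu-\vm|!)^{\delta-1}\leq(|\vnu|!)^{\delta-1}$ to factor out the Gevrey exponent, and exploit the spare power of $\rho$ to absorb the contrast constants. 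That is the paper's proof.

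There are, however, two inaccuracies worth flagging. First, the base case is $|\vnu|=1$, not $|\vnu|=0$: the claimed estimate~\eqref{lambda bound ref} would read $|\lambda_1|\leq\overline{\lambda_1}\sigma/\rho$ at $\vnu=\vzero$, which is strictly smaller than the a priori bound $\overline{\lambda_1}$ since $\rho>\sigma$, so the $n=0$ case is \emph{not} ``consistent with the claimed estimates''; the a priori bounds \eqref{lambda_k bound}--\eqref{u_k bound} enter the induction (e.g.\ in extending \eqref{mib-1} to $\vm=\vnu$) but do not serve as an instance of \eqref{lambda bound ref}/\eqref{u V bound ref}. Second, and more substantively, the ``circular dependence'' you describe --- that $\partial^{\vnu}\lambda_1$ depends on $\partial^{\vnu}u_1$ through ``the term where all derivatives hit $u_1$'' --- does not exist. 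Testing the differentiated equation against $v=u_1$ kills the $\vm=\vzero$ term exactly, because $\cA_{\vy}(u_1,\partial^{\vnu}u_1)-\lambda_1\cM_{\vy}(u_1,\partial^{\vnu}u_1)=0$ by the symmetry of $\cA_{\vy}$, $\cM_{\vy}$ and the variational equation for $u_1$ (this is the central observation in the proof of Lemma~\ref{lem:Eval-bnd}). Consequently the bound \eqref{eigenvalue upper} for $|\partial^{\vnu}\lambda_1|$ involves \emph{only strictly lower-order} derivatives of $u_1$ and $\lambda_1$, and there is no need for the sequential unwinding ($p^\bot$, then $p_1$, then $\lambda$) you propose as a fix. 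Your ordering would still give a valid (if slightly weaker) bound, but it misses the clean structural fact that both the $\lambda$-bound and the combined $u_1$-bound \eqref{dnuu1-final} are already expressed in terms of lower-order quantities only, so the induction is not coupled at top order.
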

The proof of Theorem \ref{gevrey regularity for eigenpairs for ref domain} requires upper bounds for the derivatives of the eigenvalue and the eigenfunction collected in the following Lemma \ref{lem:Eval-bnd}, Lemma \ref{upper bound for eigenfunction 1st} and Lemma~\ref{upper bound for eigenfunction 2nd}.
\begin{lemma}\label{lem:Eval-bnd}
	For sufficiently regular solutions of \eqref{variational transform} and $\vnu \in \cF$ with $|\vnu| \geq 1$ there holds
	\begin{equation}\label{eigenvalue upper}
		{\normalsize
		\begin{split}
			\abs{ \partial^{\vnu}  \lambda_1}
			&\leq
			\bigg(
			\sum_{\vzero< \vm \leq \vnu}\brac{\vnu \atop \vm} 
			\norm{   \partial^{\vnu-\vm}  u_1 }{ V}  
			\brac{
			\norm{   \partial^{\vm}  a }{\infty} 
			+
			\norm{   \partial^{\vm}  b }{\infty}
			+ 
			\lambda_1
			\norm{   \partial^{\vm}  c }{\infty} }\\
			&\quad 
			+  
			\sum_{\vzero<\vm < \vnu}
			\brac{\vnu \atop \vm}
			\abs{ \partial^{\vnu-\vm} \lambda_1}
			\sum_{0\leq \vell\leq \vm}
			\brac{\vm \atop \vell} 
			\norm{   \partial^{\vm-\vell}  u_1 }{V}
			\norm{   \partial^{\vell}  c }{\infty} 
			\bigg) \norm{ u_1}{V}.
		\end{split}
	}
	\end{equation}
\end{lemma}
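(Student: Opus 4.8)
The plan is to differentiate the variational eigenvalue equation with respect to the parameters and test against $u_1$ itself, exploiting the normalization $\norm{u_1}{\cM_{\vy}}=1$ to isolate $\partial^{\vnu}\lambda_1$ on the left-hand side. Concretely, I would start from \eqref{variational transform} written as $\cA_{\vy}(u_1(\vy),v) = \lambda_1(\vy)\,\cM_{\vy}(u_1(\vy),v)$ for all $v\in V$, insert the explicit integral representations of $\cA_{\vy}$ and $\cM_{\vy}$ with the $\vy$-dependent coefficients $a,b,c$, and apply the multivariate Leibniz rule $\partial^{\vnu}$ to both sides. The left side produces a sum over $\vm\leq\vnu$ of terms $\binom{\vnu}{\vm}\bigl(\chi_1^{-1}\int_D \partial^{\vm}a\,\nabla\partial^{\vnu-\vm}u_1\cdot\nabla v + \int_D \partial^{\vm}b\,\partial^{\vnu-\vm}u_1\,v\bigr)$; the right side, since $\lambda_1 c u_1$ is a triple product, expands by Leibniz first in the split $\lambda_1$ versus $(cu_1)$ and then within $cu_1$ in the split $c$ versus $u_1$, giving a sum over $\vm\leq\vnu$ and $\vell\leq\vm$ of $\binom{\vnu}{\vm}\binom{\vm}{\vell}\partial^{\vnu-\vm}\lambda_1\int_D\partial^{\vm-\vell}c\,\partial^{\vell}u_1\,v$.

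Next I would choose the test function $v=u_1(\vy)$ and separate out the $\vm=\vzero$ term on each side. On the left, the $\vm=\vzero$ term is exactly $\cA_{\vy}(\partial^{\vnu}u_1,u_1)$; on the right, the contribution with $\vm=\vzero$ (hence $\vell=\vzero$) and the contribution with $\vm=\vnu,\vell=\vnu$ together give $\lambda_1\,\cM_{\vy}(\partial^{\vnu}u_1,u_1) + (\partial^{\vnu}\lambda_1)\,\cM_{\vy}(u_1,u_1)$. Because $u_1$ solves the eigenproblem, $\cA_{\vy}(\partial^{\vnu}u_1,u_1)=\cA_{\vy}(u_1,\partial^{\vnu}u_1)=\lambda_1\,\cM_{\vy}(u_1,\partial^{\vnu}u_1)=\lambda_1\,\cM_{\vy}(\partial^{\vnu}u_1,u_1)$, so these two terms cancel. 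Using $\cM_{\vy}(u_1,u_1)=\norm{u_1}{\cM_{\vy}}^2=1$, one is left with $\partial^{\vnu}\lambda_1$ equal to the remaining (lower-order in $\vnu$, or at least not involving $\partial^{\vnu}\lambda_1$ or $\partial^{\vnu}u_1$) terms, namely the sum over $\vzero<\vm\leq\vnu$ coming from the $a,b,c$-differentiated pieces, plus the sum over $\vzero<\vm<\vnu$ coming from the $\partial^{\vnu-\vm}\lambda_1$ pieces with the inner $\vell$-sum over the $c$ and $u_1$ factors.

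Finally I would take absolute values and apply the triangle inequality together with the Cauchy–Schwarz / boundedness estimates: $\abs{\int_D w\,\nabla\phi\cdot\nabla\psi}\le\norm{w}{\infty}\chi_1\norm{\phi}{V}\norm{\psi}{V}$ and $\abs{\int_D w\,\phi\,\psi}\le\norm{w}{\infty}\norm{\phi}{L^2}\norm{\psi}{L^2}\le\norm{w}{\infty}\norm{\phi}{V}\norm{\psi}{V}$ via Poincaré \eqref{simpler Poincare iequ}, with the $\chi_1^{-1}$ prefactor in $\cA_{\vy}$ absorbing the $\chi_1$. Grouping the $a$, $b$, and $\lambda_1 c$ contributions yields the factor $\bigl(\norm{\partial^{\vm}a}{\infty}+\norm{\partial^{\vm}b}{\infty}+\lambda_1\norm{\partial^{\vm}c}{\infty}\bigr)$ multiplying $\norm{\partial^{\vnu-\vm}u_1}{V}$, and each of these terms carries one leftover factor $\norm{u_1}{V}$ from testing against $v=u_1$; the $\partial^{\vnu-\vm}\lambda_1$ terms similarly produce $\binom{\vnu}{\vm}\abs{\partial^{\vnu-\vm}\lambda_1}\sum_{\vzero\le\vell\le\vm}\binom{\vm}{\vell}\norm{\partial^{\vm-\vell}u_1}{V}\norm{\partial^{\vell}c}{\infty}$ times $\norm{u_1}{V}$. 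This gives exactly \eqref{eigenvalue upper}. The main obstacle here is purely bookkeeping: correctly identifying which Leibniz terms cancel (the crucial step being that, after testing with $u_1$, the $\vm=\vzero$ term on the left and the two "endpoint" terms on the right combine so that no $\partial^{\vnu}u_1$ survives), and organizing the remaining double sum so that the $c$-factor expansion matches the claimed inner sum over $\vell$; there is no analytic subtlety beyond the standard boundedness and Poincaré estimates, which hold under \eqref{abc-bounds}.
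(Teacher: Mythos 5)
Your proof is correct and takes essentially the same route as the paper: apply the Leibniz rule to the differentiated variational identity, test with $v=u_1$, use the normalization $\cM_{\vy}(u_1,u_1)=1$ to isolate $\partial^{\vnu}\lambda_1$, and observe that the top-order $\partial^{\vnu}u_1$ terms cancel because $\cA_{\vy}(\partial^{\vnu}u_1,u_1)=\lambda_1\,\cM_{\vy}(\partial^{\vnu}u_1,u_1)$ by the eigenvalue equation, before finishing with Cauchy--Schwarz and Poincar\'e. The only differences are cosmetic: the paper first moves the $\vm=\vnu$ term of the right-hand Leibniz expansion to the left and then notes that the $\vm=\vzero$ term of the expanded left side vanishes, whereas you keep both endpoint terms in place and cancel them directly; likewise your inner-sum index $\vell$ is the complement ($\vm-\vell$) of the paper's, which is resolved by a trivial reindexing.
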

\begin{proof}
The Leibniz general product rule for the $\vnu$-th derivative of \eqref{variational transform} with respect to $ \vy$ implies
				\begin{equation}\label{nu derivative}
					{\small
				\begin{split}			
					\int_{ D} &\chi_1^{-1} 
					\partial^{\vnu}(  a  \nabla u_1) \cdot  \nabla  v 
					+
					\partial^{\vnu}( b   u_1)   v  - \lambda_1
					\partial^{\vnu}( c   u_1)   v
					=
					\sum_{\vzero\leq\vm< \vnu}\brac{\vnu \atop \vm}
					\partial^{\vnu-\vm}  \lambda_1 
					\int_{ D}   \,  \partial^{\vm}(  c u_1)   v,
				\end{split}}
			\end{equation}
that is valid for all $v\in V$. Notice that the term with $\vm = \vnu$ is moved to the  left-hand side.
	To obtain a bound on the derivatives of the eigenvalue, we specify $ v= u_1$ and split $ \partial^{\vnu}  \lambda_1$ from the remaining terms. This implies
		\begin{equation*}\label{nu derivative2}{
			\scriptsize
		\begin{split}			
			\int_{ D} & \chi_1^{-1} 
\partial^{\vnu}(  a \nabla u_1) \cdot  \nabla  u_1 
			+
\partial^{\vnu}( b   u_1)   u_1  - \lambda_1
\partial^{\vnu}( c   u_1)   u_1
= \partial^{\vnu} \lambda_1 + 
\sum_{\vzero<\vm< \vnu}\brac{\vnu \atop \vm}
			 \partial^{\vnu-\vm}  \lambda_1 
		\int_{ D}   \,  \partial^{\vm}(  c u_1)   u_1,
		\end{split}}
	\end{equation*}
where we have used the normalization $\cM_{ \vy}( u_1, u_1)=1$ in the right-hand side. Next, we apply the Leibniz general product rule to expand the derivatives and arrive at 
	\begin{align}\label{equ: d vnu lambda}
\small	\partial^{\vnu}  \lambda_1
=& \small
			\sum_{\vzero<\vm\leq \vnu}\brac{\vnu \atop \vm}
			\int_{ D} \bigg(\chi_1^{-1} \partial^{\vm}  a\,  \partial^{\vnu-\vm}  \nabla  u_1 \cdot  \nabla  u_1 
			+
			 \partial^{\vm}  b\,  \partial^{\vnu-\vm}   u_1   u_1
			- \lambda_1 \partial^{\vm}  c\,  \partial^{\vnu-\vm}   u_1   u_1\bigg)\notag\\
			&\small\quad-
			\sum_{\vzero<\vm < \vnu}
			\brac{\vnu \atop \vm}  \partial^{\vnu-\vm}  \lambda_1 
			\sum_{0\leq \vell\leq \vm}
			\brac{\vm \atop \vell} 
			\int_{ D} \,  \partial^{\vell}  c\, \partial^{\vm-\vell}  u_1   u_1.  
\end{align}
Observe that the terms with $\vm = \vzero$ vanish in the first sum. Indeed, if $\partial^{\vnu} u_1 \in V$, equation \eqref{variational transform} implies $\cA_{ \vy}( u_1, \partial^{\vnu}  u_1)- \lambda_1 \cM_{ \vy}( u_1, \partial^{\vnu}  u_1)=0$. The statement of the Lemma follows now by the triangle inequality, the Cauchy-Schwarz inequality and the bound $\norm{ u_1}{L^2( D)}\leq \norm{ u_1 }{ V}$, cf. \eqref{simpler Poincare iequ}.
\end{proof}	
 {The existence of the partial derivatives $\partial^{\vnu}\lambda_1$ and $\partial^{\vnu} u_1$ at some fixed $\vy \in U$ for all $\vnu \in \cF$ follows by induction. For the inductive step we will prove that  
$\partial^{\vnu}\lambda_1$ and $\partial^{\vnu}u_1$ are determined by their lower order derivatives. Relation \eqref{equ: d vnu lambda} shows this property for $\partial^{\vnu}\lambda_1$. A similar representation for $\partial^{\vnu}u_1$ will follow from the representations of its orthogonal projection $p_1$ onto the first eigenspace $E(\lambda_1(\vy))$ and the orthogonal complement $p^\bot \in E(\lambda_1(\vy))^\bot$. In this case 
}
	\begin{equation}\label{dnuu1-decomp}
	\partial^{\vnu}u_1
	=
	p_1
	+
	p^\bot, \qquad 
	p_1 \in E(\lambda_1(\vy)), \quad
	p^\bot \in E(\lambda_1(\vy))^\bot
	\end{equation}
{is a uniquely determined bi-orthogonal decomposition for all fixed $\vy \in U$, where} the bi-orthogonality means that the relations $\cA_{\vy}(p_1,p^\bot) = 0$~and $\cM_{\vy}(p_1,p^\bot) = 0$ are simultaneously satisfied. 
{
The required characterizations for $p_1$ and $p^\bot$ is a by-product of the forthcoming Lemma \ref{upper bound for eigenfunction 1st} and Lemma \ref{upper bound for eigenfunction 2nd}. Indeed, the existence and uniqueness of $p^\bot$ is guaranteed by the Lax-Milgram Theorem and the variational formulation \eqref{nu derivative-6}. The component $p_1$ is a multiple of $u_1$ and the proportionality constant will be characterized in \eqref{M vnu 2}.} The triangle inequality reveals
	\begin{equation}\label{dnuu1-norm-decomp}
	\|\partial^{\vnu} u_1\|_V
	\leq
	\|p_1\|_V
	+
	\|p^\bot\|_V.
	\end{equation}
	In the following we will estimate the norms of the projections $p_1$ and $p^\bot$ independently. The following Lemma shows that the norm of $p^\bot$ admits a similar upper bound as the eigenvalue in Lemma \ref{lem:Eval-bnd} that involves lower order derivatives of $\lambda_1$ and $u_1$ only.
\begin{lemma}\label{upper bound for eigenfunction 1st}
	For sufficiently regular solutions of \eqref{variational transform} and $\vnu \in \cF$ with $|\vnu| \geq 1$ there holds
	\begin{equation} \label{v tilder upper}
		\begin{split}
		C_\mu \|p^\bot\|_V 
		&\leq
		\sum_{\vzero < \vm \leq \vnu}\brac{\vnu \atop \vm} 
		\norm{   \partial^{\vnu-\vm}  u_1 }{ V}  
		\brac{
			\norm{   \partial^{\vm}  a }{\infty} 
			+
			\norm{   \partial^{\vm}  b }{\infty} 
			+
			\lambda_1\norm{   \partial^{\vm}  c }{\infty} }\\
		&\quad +
		\sum_{\vzero< \vm < \vnu}\brac{\vnu \atop \vm} 
		\abs{   \partial^{\vnu-\vm}  \lambda_1 }
		\sum_{0\leq \vell\leq \vm}
		\brac{\vm \atop \vell} 
		\norm{   \partial^{\vm-\vell}  u_1 }{ V}  
		\norm{   \partial^{\vell}  c }{\infty} .
		\end{split}
	\end{equation}
\end{lemma}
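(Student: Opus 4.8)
The plan is to exploit the uniform coercivity of the shifted form $\wtd\cA_{\vy} = \cA_{\vy} - \lambda_1\cM_{\vy}$ on $E(\lambda_1(\vy))^\bot$ established in Lemma~\ref{lem:coercive-type}. Since $p^\bot \in E(\lambda_1(\vy))^\bot$, that lemma gives $C_\mu\|p^\bot\|_V^2 \leq \wtd\cA_{\vy}(p^\bot, p^\bot)$. The first step is to replace the right-hand side by $\wtd\cA_{\vy}(\partial^{\vnu}u_1, p^\bot)$. This is legitimate because $p_1 \in E(\lambda_1(\vy))$ is a multiple of $u_1$, hence $\cA_{\vy}(p_1, v) = \lambda_1\cM_{\vy}(p_1, v)$ for every $v \in V$, i.e. $\wtd\cA_{\vy}(p_1, \cdot) \equiv 0$; combined with the decomposition \eqref{dnuu1-decomp} and the symmetry of $\wtd\cA_{\vy}$ this yields $\wtd\cA_{\vy}(p^\bot, p^\bot) = \wtd\cA_{\vy}(\partial^{\vnu}u_1, p^\bot)$.

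Next I would compute $\wtd\cA_{\vy}(\partial^{\vnu}u_1, p^\bot)$ from the Leibniz-differentiated variational identity. Testing \eqref{nu derivative} with $v = p^\bot$ and expanding all products by the general Leibniz rule, the terms carrying $\vm = \vzero$ on the left collect exactly to $\cA_{\vy}(\partial^{\vnu}u_1, p^\bot) - \lambda_1\cM_{\vy}(\partial^{\vnu}u_1, p^\bot) = \wtd\cA_{\vy}(\partial^{\vnu}u_1, p^\bot)$, so this quantity equals the remaining (lower-order) terms, with the convention that the $\vm = \vnu$ piece has been moved to the left as in \eqref{nu derivative}. Crucially, the $\vm = \vzero$ contribution on the right-hand side of \eqref{nu derivative} is $\partial^{\vnu}\lambda_1\,\cM_{\vy}(u_1, p^\bot)$, which vanishes because $p^\bot$ is $\cM_{\vy}$-orthogonal to $u_1$; thus no $\partial^{\vnu}\lambda_1$ term appears and only derivatives of order strictly below $|\vnu|$ survive, exactly as needed for the later induction.

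Finally, I would bound the surviving terms by the triangle inequality, the Cauchy--Schwarz inequality, the coefficient bounds \eqref{abc-bounds}, and the Poincar\'e inequality \eqref{simpler Poincare iequ} (to pass from $L^2(D)$- to $V$-norms on $p^\bot$ and on $\partial^{\vnu-\vm}u_1$, resp. $\partial^{\vm-\vell}u_1$). Each resulting term carries a single factor $\|p^\bot\|_V = \chi_1^{-1/2}\|\nabla p^\bot\|_{L^2(D)}$, while expanding $\partial^{\vm}(cu_1)$ by Leibniz in the eigenvalue-derivative sum produces the inner $\vell$-sum. Dividing the inequality $C_\mu\|p^\bot\|_V^2 \leq (\cdots)\|p^\bot\|_V$ by $\|p^\bot\|_V$ (the case $p^\bot = 0$ being trivial) gives exactly \eqref{v tilder upper}.

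I expect the only delicate point to be the bookkeeping: tracking precisely which Leibniz terms get absorbed into $\wtd\cA_{\vy}(\partial^{\vnu}u_1, \cdot)$ versus moved to the right-hand side, and checking carefully that the $\partial^{\vnu}\lambda_1$ contribution genuinely drops out under the choice $v = p^\bot$. The existence of $\partial^{\vnu}u_1 \in V$ and of the bi-orthogonal splitting \eqref{dnuu1-decomp} is part of the inductive hypothesis and the ``sufficiently regular'' assumption, so no extra work is needed there.
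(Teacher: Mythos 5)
Your proposal is correct and follows essentially the same route as the paper: invoke the coercivity of $\wtd\cA_{\vy}$ on $E(\lambda_1(\vy))^\bot$ (Lemma~\ref{lem:coercive-type}), replace $\wtd\cA_{\vy}(p^\bot,p^\bot)$ by $\wtd\cA_{\vy}(\partial^{\vnu}u_1,p^\bot)$ using $\wtd\cA_{\vy}(p_1,\cdot)\equiv 0$, test \eqref{nu derivative} with $v=p^\bot$ so that the $\partial^{\vnu}\lambda_1$ term drops out by $\cM_{\vy}$-orthogonality, Leibniz-expand the remaining lower-order terms, and finish with Cauchy--Schwarz and Poincar\'e before dividing by $\|p^\bot\|_V$. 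The only cosmetic difference is that you mention symmetry of $\wtd\cA_{\vy}$, which is not actually needed since $\wtd\cA_{\vy}(p_1,\cdot)\equiv 0$ already handles the cross term.
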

\begin{proof}
Recall identity \eqref{nu derivative} that is valid for all $v \in V$. In the particular case $v^\bot  \in E(\lambda_1(\vy))^\bot$ it reads
\begin{equation*}\small
\int_{ D} \chi_1^{-1} 
\partial^{\vnu}(  a  \nabla u_1) \cdot  \nabla  v^\bot
			+
\partial^{\vnu}( b   u_1)   v^\bot - \lambda_1
\partial^{\vnu}( c   u_1)   v^\bot
=
\sum_{\vzero<\vm< \vnu}\brac{\vnu \atop \vm}
			 \partial^{\vnu-\vm}  \lambda_1 
		\int_{ D}   \,  \partial^{\vm}(  c u_1)  v^\bot.
\end{equation*}
Notice that the term with $\vm = \vzero$ on the right-hand side has vanished, since $u_1$ and {$v^\bot$} are $\cM_{\vy}$-orthogonal for all fixed $\vy \in U$. Therefore the highest order derivative $\partial^{\vnu} \lambda_1$ is not present in this equation. Next, we apply the Leibniz general product rule and obtain
	\begin{align}
\sum_{\vzero \leq \vm \leq \vnu} 
		\brac{\vnu \atop \vm}			
			\int_{ D} \chi_1^{-1} 
  \partial^{\vm}a  \, \partial^{\vnu - \vm}(\nabla u_1) \cdot  \nabla  v^\bot 
			+
\partial^{\vm} b \, \partial^{\vnu - \vm} u_1 \, v^\bot  - \lambda_1
\partial^{\vm} c \, \partial^{\vnu - \vm} u_1 \, v^\bot &\notag\\
=
\sum_{\vzero<\vm< \vnu}\brac{\vnu \atop \vm}
			 \partial^{\vnu-\vm}  \lambda_1 
			 \sum_{\vzero\leq \vell\leq \vm}\brac{\vm \atop \vell}
		\int_{ D}   \,  \partial^{\vell}  c\, \partial^{\vm - \vell} u_1 \,   v^\bot&. \label{nu derivative4}
\end{align}
	 {We recall the definition of $\wtd \cA_{\vy}$ and the bi-orthogonal decomposition \eqref{dnuu1-decomp} to derive 
\begin{align*}
\wtd \cA_{ \vy}( \partial^{\vnu } u_1,v^\bot)
=
\cA_{ \vy}( p_1,v^\bot)
-
\lambda_1\cM_{ \vy}( p_1,v^\bot)
+
\wtd \cA_{ \vy}( p^\bot,v^\bot)
=
\wtd \cA_{ \vy}( p^\bot,v^\bot).
\end{align*}
Observe that the highest order derivatives $\partial^{\vnu} u_1$ do only appear in the term with $\vm = \vzero$ in the left-hand side of \eqref{nu derivative4}. We keep this term in the left-hand side and put the others to right-hand side to obtain
	\begin{equation}\label{nu derivative-6}
\begin{split}	
&	\small
\wtd \cA_{ \vy}(p^\bot,v^\bot) =
\sum_{\vzero<\vm< \vnu}\brac{\vnu \atop \vm}
			 \partial^{\vnu-\vm}  \lambda_1 
			 \sum_{\vzero\leq \vell\leq \vm}\brac{\vm \atop \vell}
		\int_{ D}   \,  \partial^{\vell}  c\, \partial^{\vm - \vell} u_1 \,   v^\bot \\
&\small-
		\sum_{\vzero < \vm \leq \vnu} 
		\brac{\vnu \atop \vm}			
			\int_{ D} \chi_1^{-1} 
  \partial^{\vm}a  \, \partial^{\vnu - \vm}(\nabla u_1) \cdot  \nabla  v^\bot 
			+
\partial^{\vm} b \, \partial^{\vnu - \vm} u_1 \, v^\bot  - \lambda_1
\partial^{\vm} c \, \partial^{\vnu - \vm} u_1 \, v^\bot 
. 
	\end{split}
	\end{equation}
We now put $v^\bot=p^\bot$ and utilize Lemma \ref{lem:coercive-type} to obtain $\wtd \cA_{ \vy}(p^\bot,p^\bot)\geq  C_\mu \|p^\bot\|_V^2$. }
The statement of the Lemma follows now from \eqref{nu derivative-6} by the triangle inequality, the Cauchy-Schwarz inequality and the Poincar\'e inequality \eqref{simpler Poincare iequ}.
\end{proof}

\begin{lemma}\label{upper bound for eigenfunction 2nd}
	For sufficiently regular solutions of \eqref{variational transform} and $\vnu \in \cF$ with $|\vnu| \geq 1$ there holds
	\begin{equation} \label{w_1 upper}
		\begin{split}
			\norm{p_1}{V}
			&\leq		
			\frac{\|u_1\|_V^2}{2}
			\sum_{\vzero < \vm \leq \vnu}
			\brac{\vnu \atop \vm}
			\norm{\partial^{\vnu-\vm}  u_1}{V}
			\norm{\partial^{\vm} c}{\infty} \\
			&\quad+
			\frac{\|u_1\|_V}{2}
			\sum_{\vzero < \vm < \vnu}
			\brac{\vnu \atop \vm}
			\norm{\partial^{\vnu-\vm}  u_1}{V}
			\sum_{\vzero \leq  \vell\leq \vm} \brac{\vm \atop \vell}
			\norm{\partial^{\vm-\vell} u_1}{V}
			\norm{\partial^{\vell} c}{\infty}.
		\end{split}
	\end{equation}
\end{lemma}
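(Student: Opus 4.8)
The plan is to exploit the one-dimensionality of the first eigenspace together with the normalization constraint. Since $\lambda_1(\vy)$ is simple, $E(\lambda_1(\vy)) = \spann\{u_1(\vy)\}$, so in the bi-orthogonal decomposition \eqref{dnuu1-decomp} the component $p_1$ is a scalar multiple of $u_1$, say $p_1 = \alpha\, u_1$ with $\alpha = \alpha(\vy)\in\mR$. Testing with $u_1$ in the inner product $\cM_{\vy}$, and using that $p^\bot$ is $\cM_{\vy}$-orthogonal to $u_1$ while $\cM_{\vy}(u_1,u_1)=1$, I get $\alpha = \cM_{\vy}(\partial^{\vnu}u_1,u_1)$, hence $\norm{p_1}{V} = |\alpha|\,\norm{u_1}{V}$. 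Thus it suffices to bound $|\cM_{\vy}(\partial^{\vnu}u_1,u_1)|$ by lower-order derivatives.

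The key step is to differentiate the normalization identity $\cM_{\vy}(u_1,u_1) = \int_D c(\vy)\,u_1(\vy)^2 = 1$, which is constant in $\vy$, so $\partial^{\vnu}$ of its left-hand side vanishes for $|\vnu|\geq 1$. Writing the integrand as $(c\,u_1)\cdot u_1$ and applying the Leibniz product rule gives $0 = \sum_{\vzero\leq\vm\leq\vnu}\binom{\vnu}{\vm}\int_D \partial^{\vm}(c\,u_1)\,\partial^{\vnu-\vm}u_1$. The highest-order derivative $\partial^{\vnu}u_1$ enters in exactly two of these summands: in the $\vm=\vzero$ term it appears directly as $\int_D c\,u_1\,\partial^{\vnu}u_1 = \cM_{\vy}(\partial^{\vnu}u_1,u_1) = \alpha$, and in the $\vm=\vnu$ term it appears, after a second Leibniz expansion of $\partial^{\vnu}(c\,u_1)$, from the $\vell=\vzero$ piece, again contributing $\alpha$. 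Collecting these two equal contributions isolates $2\alpha$ on one side and leaves on the other side only terms involving derivatives of $c$ of order $\leq|\vnu|$ and derivatives of $u_1$ of order $<|\vnu|$. Expanding the remaining $\partial^{\vm}(c\,u_1)$ by Leibniz once more, I would record the resulting identity
\begin{equation}\label{M vnu 2}
2\,\cM_{\vy}(\partial^{\vnu}u_1,u_1)
=
-\sum_{\vzero<\vm\leq\vnu}\binom{\vnu}{\vm}\int_D \partial^{\vm}c\,\partial^{\vnu-\vm}u_1\,u_1
-\sum_{\vzero<\vm<\vnu}\binom{\vnu}{\vm}\sum_{\vzero\leq\vell\leq\vm}\binom{\vm}{\vell}\int_D \partial^{\vell}c\,\partial^{\vm-\vell}u_1\,\partial^{\vnu-\vm}u_1,
\end{equation}
which provides the promised characterization of the proportionality constant announced before the statement of the lemma.

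The rest is routine and mirrors the end of the proofs of Lemma \ref{lem:Eval-bnd} and Lemma \ref{upper bound for eigenfunction 1st}: I would estimate each integral in \eqref{M vnu 2} by H\"older's inequality, using the $L^\infty$ bound on the derivative of $c$ and Cauchy--Schwarz on the two $u_1$-factors, convert the $L^2$-norms of gradients into $V$-norms via the Poincar\'e inequality \eqref{simpler Poincare iequ}, apply the triangle inequality, and finally multiply through by $\norm{u_1}{V}$ to pass from $|\alpha|$ to $\norm{p_1}{V}=|\alpha|\,\norm{u_1}{V}$. This reproduces \eqref{w_1 upper} exactly: the factor $\tfrac12$ stems from the $2\alpha$ in \eqref{M vnu 2}, the term $\tfrac12\norm{u_1}{V}^2$ in the first sum from the extra $u_1$-factor inside the integral together with the $\norm{u_1}{V}$ coming from $p_1=\alpha u_1$, and the term $\tfrac12\norm{u_1}{V}$ in the double sum from that same factor applied to the triple-product integrals.

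I expect the only delicate point to be the combinatorial bookkeeping in the two nested Leibniz expansions, namely verifying that $\partial^{\vnu}u_1$ occurs in precisely two summands of the differentiated normalization identity and with equal coefficient, so that it can be cleanly collected as $2\,\cM_{\vy}(\partial^{\vnu}u_1,u_1)$ and that no top-order derivative of $u_1$ or $c$ survives on the right-hand side. Once this is settled, the induction-readiness of the bound (all remaining terms being genuinely lower order) is immediate, and the subsequent norm estimates are entirely standard.
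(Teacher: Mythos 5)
Your proposal is correct and follows essentially the same route as the paper: you project $\partial^{\vnu}u_1$ onto $\spann\{u_1\}$ via the coefficient $\cM_{\vy}(\partial^{\vnu}u_1,u_1)$, differentiate the normalization $\cM_{\vy}(u_1,u_1)=1$ with the Leibniz rule, collect the two occurrences of $\partial^{\vnu}u_1$ into $2\,\cM_{\vy}(\partial^{\vnu}u_1,u_1)$, and then estimate by Cauchy--Schwarz and the Poincar\'e inequality \eqref{simpler Poincare iequ}, which is exactly the paper's argument. One small slip in your final paragraph: the remark that ``no top-order derivative of $u_1$ or $c$ survives on the right-hand side'' contradicts your own (correct) identity, whose first sum does include $\partial^{\vnu}c$ at $\vm=\vnu$; this is harmless, since derivatives of $c$ of all orders are controlled directly by Assumption \ref{Assumption} and do not participate in the induction, but the phrasing should be restricted to $u_1$.
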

\begin{proof}
Since $p_1$ is the projection of $\partial^{\vnu} u_1$ onto $E(\lambda_1(\vy))$, we have the representation $p_1=\cM_{\vy}(\partial^{\vnu}u_1,u_1) \,u_1$ and hence
\begin{equation}\label{p_1 def & bound}
\norm{p_1}{V}
\leq
\abs{\cM_{\vy}(\partial^{\vnu}u_1,u_1)} 
\, \|u_1\|_V.
\end{equation}
It remains to estimate the modulus of the coefficient.
Recall that \mbox{$\cM_{\vy}(u_1,u_1)=1$}. Thus, taking the $\vnu$-th derivatives of this identity we get by the Leibniz general product rule
	\begin{equation*}\label{M vnu}\small
	0 = \partial^{\vnu} \cM_{\vy}(u_1,u_1)
	 = \sum_{\vzero < \vm < \vnu} 
	\brac{\vnu \atop \vm} \int_D \partial^{\vnu - \vm} u_1 \, \partial^{\vm}(c \, u_1) + 
	\int_D \, (\partial^{\vnu} u_1) \, c \, u_1
	+ \int_D u_1 \, \partial^{\vnu} (c \, u_1).
	\end{equation*}
The highest order derivative $\partial^{\vnu} u_1$ does only appear in the last two integrals. Splitting it from the remaining terms implies
\begin{equation}\label{M vnu 2}\small
2 \cM_{\vy} (\partial^{\vnu}u_1,u_1) = -
\sum_{\vzero < \vm < \vnu} 
	\brac{\vnu \atop \vm} \int_D \partial^{\vnu - \vm} u_1 \, \partial^{\vm}(c \, u_1)  
	- 
	\sum_{\vzero < \vm \leq \vnu} 
	\brac{\vnu \atop \vm} 
	\int_D u_1 \, \partial^{\vm} c \, \partial^{\vnu-\vm} u_1 
\end{equation}
since $\cM_{\vy}$ is symmetric. The statement of the Lemma follows now from \eqref{p_1 def & bound} and \eqref{M vnu 2} by the triangle inequality, the Cauchy-Schwarz inequality and the Poincar\'e inequality \eqref{simpler Poincare iequ}.
\end{proof}

Inequality \eqref{dnuu1-norm-decomp}, Lemma \ref{upper bound for eigenfunction 1st} and Lemma \ref{upper bound for eigenfunction 2nd} imply the estimate
	\begin{equation}
	\begin{split} \label{dnuu1-final}
		&\small
			C_\mu	\|\partial^{\vnu} u_1\|_V \\
		& \small \leq	
		\sum_{\vzero < \vm \leq \vnu}\brac{\vnu \atop \vm} 
		\norm{   \partial^{\vnu-\vm}  u_1 }{ V}  
		\brac{
	 \norm{\partial^{\vm}  a }{\infty} 
			+
	 \norm{  \partial^{\vm}  b }{\infty} 
			+
			( \lambda_1 + \tfrac{C_\mu}{2}\|u_1\|_V^2)\norm{   \partial^{\vm}  c }{\infty} } \\
		 & \small +
		\sum_{\vzero< \vm < \vnu}\brac{\vnu \atop \vm} 
		{\brac{\abs{   \partial^{\vnu-\vm}  \lambda_1 }
		+ \tfrac{C_\mu}{2} \|u_1\|_V \|\partial^{\vnu-\vm}u_1\|_V}}
		\sum_{0\leq \vell\leq \vm}
		\brac{\vm \atop \vell} 
		\norm{   \partial^{\vm-\vell}  u_1 }{ V}  
		\norm{   \partial^{\vell}  c }{\infty} . 
	\end{split}
	\end{equation}

\begin{proof}[Proof of Theorem \ref{gevrey regularity for eigenpairs for ref domain}]
	
We now prove the main theorem by induction. {For the first derivative}, we establish the base for first derivative of the eigenvalue.  Letting $\vnu$ be the unit multiindex, that is $\vnu = \ve$ with $|\ve| = 1$ we utilize \eqref{eigenvalue upper} together with \eqref{lambda_k bound}, \eqref{u_k bound}, \eqref{abc tilder assumption} and the identities \eqref{rel-contrast} to obtain
\begin{align*}
	\abs{\partial^{\ve} \lambda_1}
	\leq
	\overline{u_1}^2\brac{ \frac{\overline{a}\gdnota{1}}{\vR^{\ve}}
	+ \frac{\overline{b}\gdnota{1}}{\vR^{\ve}}
	+
	 \frac{\overline{ \lambda_1}\,
	 \overline{c}\gdnota{1}}{\vR^{\ve}}
    }
	= 	\frac{\overline{\lambda_1}}{\underline{a}}\brac{ \overline{a} 
	+ \overline{b} 
	+
	\overline{ \lambda_1}\,
	 \overline{c} 
    }
	\frac{\gdnota{1}}{\vR^{\ve}}
	=
	\overline{ \lambda_1}
	\frac{\sigma_1\gdnota{1}}{\vR^{\ve}}
\end{align*}
For the eigenfunction we substitute $\vnu=\ve$ into \eqref{dnuu1-final} and use \eqref{lambda_k bound}, \eqref{u_k bound}, \eqref{abc tilder assumption} to get
\begin{align*}
	\footnotesize
	\norm{\partial^{\ve} u_1}{V}
	\leq
		\frac{\overline{u_1}}{C_{\mu}}
		\brac{\frac{\overline{a}\gdnota{1}}{\vR^{\ve}}
		+
		\frac{\overline{b}\gdnota{1}}{\vR^{\ve}}
		+
		\brac{
		\overline{ \lambda_1}
		+
		\frac{C_\mu \overline{u_1}^2}{2}}
		\frac{\overline{c}\gdnota{1}}{\vR^{\ve}}
	}
=
\overline{u_1}
\brac{
	\frac{\overline{a}
	+
	\overline{b}
	+
	\overline{ \lambda_1}\,
	\overline{c}
}{\underline{a} \, \mu}
+
\frac{\overline{u_1}^2 \overline{c}}{2}
}\frac{\gdnota{1}}{\vR^{\ve}}.
\end{align*}
By \eqref{rel-contrast} the term in parentheses equals to $\sigma$.
This concludes the base of the induction.
For the inductive step for the eigenvalue derivative bound, suppose that $\abs{\vnu}\geq 2$ and the bounds \eqref{lambda bound ref} and \eqref{u V bound ref} hold for all multi-indicies of order strictly less than $\vnu$. In particular, {the inductive assumption} imply
\begin{align}\label{mib-1} \small
	\max \left(\frac{\abs{ \partial^{\vnu-\vm} \lambda_1( \vy)}}{\overline{ \lambda_1}}, \frac{\norm{\partial^{\vnu-\vm} u_1}{V}}{\overline{u_1} } 
	\right)
	\leq
	\frac{{\sigma}\rho^{\abs{\vnu - \vm}} \gdnota{\abs{\vnu-\vm}}}{\rho \vR^{{\vnu-\vm}}(\abs{\vnu-\vm}!)^{1-\delta}}, 
	\qquad \vzero < \vm < \vnu, \quad |\vnu| \geq 2.
\end{align}
Since $\sigma, \rho \geq 1$ we have for $\vm > \vzero$ the trivial upper bound {$\rho^{|\vnu - \vm|} \leq  \rho^{|\vnu|-1}$}, and thereby 
\begin{align}\label{mib-2}
	\norm{\partial^{\vnu-\vm} u_1}{V}
	\leq
	\frac{\overline{u_1}{\sigma}\rho^{\abs{\vnu}} \gdnota{\abs{\vnu-\vm}}}{\rho^2 \vR^{{\vnu-\vm}}(\abs{\vnu-\vm}!)^{1-\delta}}, 
	\qquad \vzero < \vm \leq \vnu, \quad |\vnu| \geq 2.
\end{align}
Notice that this estimate indeed holds for the extended range including $\vm = \vnu$, since in this case, according to \eqref{u_k bound} {and $\sigma, \rho \geq 1$}, we have $\|u_1\|_V \leq \overline{u_1} \leq \overline{u_1}  {\sigma} \rho^{|\vnu|-2}$.
The following estimate also follows with similar arguments
\begin{align}\label{mib-3}
	\norm{\partial^{\vm-\vell} u_1}{V}
	\leq
	\frac{\overline{u_1}{\sigma} \rho^{\abs{\vm}} \gdnota{\abs{\vm-\vell}}}{\rho \vR^{{\vm-\vell}}(\abs{\vm-\vell}!)^{1-\delta}}, \qquad \vzero\leq \vell \leq \vm, \quad \vzero < \vm < \vnu, \quad |\vnu| \geq 2.
\end{align}
We are now ready to proceed with the inductive step. For the derivative of the eigenvalue recall \eqref{eigenvalue upper}, the regularity assumptions \eqref{abc tilder assumption} and the bounds \eqref{mib-1} -- \eqref{mib-3} to obtain 
\begin{align*}
	&\small
		\abs{ \partial^{\vnu}  \lambda_1}
		  \leq
		\overline{ u_1}
		\sum_{\vzero< \vm \leq \vnu}\brac{\vnu \atop \vm} 
		\frac{\overline{u_1}{\sigma}\rho^{\abs{\vnu}} \gdnota{\abs{\vnu-\vm}}}{\rho^2 \vR^{{\vnu-\vm}}(\abs{\vnu-\vm}!)^{1-\delta}}  
		\brac{
			 \frac{(\overline{a}+\overline{b})\gdnota{\abs{\vm}} }{\vR^{\vm}(\abs{\vm}!)^{1-\delta}}
			+ 
			\overline{ \lambda_1}
			\frac{\overline{c}\gdnota{\abs{\vm}} }{\vR^{\vm}(\abs{\vm}!)^{1-\delta}}}\\
		&\small
		+  \overline{ u_1}
		\sum_{\vzero<\vm < \vnu}
		\brac{\vnu \atop \vm}
		\frac{ \overline{ \lambda_1} {\sigma} \rho^{\abs{\vnu-\vm}}\gdnota{\abs{\vnu-\vm}}}{\rho \vR^{{\vnu-\vm}}(\abs{\vnu-\vm}!)^{1-\delta}}  
		\sum_{0\leq \vell\leq \vm}
		\brac{\vm \atop \vell} 
		\frac{\overline{u_1}{\sigma} \rho^{\abs{\vm}} \gdnota{\abs{\vm-\vell}}}{\rho \vR^{{\vm-\vell}}(\abs{\vm-\vell}!)^{1-\delta}}
		 \frac{\overline{c}\gdnota{\abs{\vell}} }{\vR^{\vell}(\abs{\vell}!)^{1-\delta}},
\end{align*} 
Since $\delta \geq 1$ and $ |\vnu - \vm|!|\vm|! \leq |\vnu|!$ and likewise $|\vnu - \vm|! |\vm - \vell|!  |\vell|!\leq |\vnu|!$ the term $(|\vnu|!)^{\delta-1}$ can be factored out by making the right-hand side larger. Collecting the remaining terms we obtain
\begin{align*}
	\abs{ \partial^{\vnu}  \lambda_1}
	&\leq
	\overline{ u_1}^2
	(\overline{a}+\overline{b}+
	\overline{\lambda_1}\overline{c})
	\frac{{\sigma} \rho^{\abs{\vnu}}}{\rho^2\vR^{\vnu}}
	\frac{1}{(\abs{\vnu}!)^{1-\delta}}
	\sum_{\vzero< \vm \leq \vnu}\brac{\vnu \atop \vm} 
	\gdnota{\abs{\vnu-\vm}}
	\gdnota{\abs{\vm}} \\
	&
	+  \overline{ \lambda_1} \overline{ u_1}^2
		\, \overline{c}
			\frac{{\sigma^2}\rho^{\abs{\vnu}}}{\rho^2 \vR^{\vnu}}
	\frac{1}{(\abs{\vnu}!)^{1-\delta}}
	\sum_{\vzero<\vm < \vnu}
	\sum_{0\leq \vell\leq \vm}
	\brac{\vnu \atop \vm}
	\brac{\vm \atop \vell} 
	\gdnota{\abs{\vnu-\vm}}  
	\gdnota{\abs{\vm-\vell}}
	\gdnota{\abs{\vell}}  .
\end{align*}
{Recall that $|\vnu| \geq 2$.} {Thanks to} the bounds \eqref{multiindex-est-3}, \eqref{multiindex-est-4} from the appendix, the first sum equals to $3 [\tfrac{1}{2}]_{|\vnu|}$, the second is bounded by $8 [\tfrac{1}{2}]_{|\vnu|}$. Notice moreover that $ \overline{u_1}^2=\underline{a}^{-1}\overline{\lambda_1}$ as we arrive at
\begin{align}\label{lambda vnu induction}
	\abs{ \partial^{\vnu}  \lambda_1}
	\leq 
	\overline{ \lambda_1}{\sigma}
	\brac{
	\frac{
	3 (\overline{a}+\overline{b}
	+\overline{\lambda_1}\overline{c})}
	{\underline{ a}}
	+8 \overline{ u_1}^2\overline{c}\,{\sigma}}
	\frac{\rho^{\abs{\vnu}}}{\rho^2\vR^{\vnu}}
	\frac{\gdnota{\abs{\vnu}}}{(\abs{\vnu}!)^{1-\delta}}.
\end{align}
According to the definition \eqref{def-contrast} of the coefficient contrast $\con_a$ and $\con_c$ the term in parentheses equals $\rho_1$. Since $\rho_1 \leq \rho$ this implies the statement \eqref{lambda bound ref} for the derivatives of the eigenvalue.

The inductive step for the eigenfunction is similar. Here we combine \eqref{dnuu1-final} with the regularity assumptions \eqref{abc tilder assumption} and the bounds \eqref{mib-1} -- \eqref{mib-3} to obtain 
\begin{align*}
		&\small
		\|\partial^{\vnu} u_1\|_V  \\
		&\leq
		C_{\mu}^{-1} 
		\sum_{\vzero < \vm \leq \vnu}\brac{\vnu \atop \vm} 
		\frac{\overline{u_1}{\sigma} \rho^{\abs{\vnu}}\gdnota{\abs{\vnu-\vm}}}{\rho^{2}\vR^{{\vnu-\vm}}(\abs{\vnu-\vm}!)^{1-\delta}}  
		\frac{(\overline{a}+\overline{b}+\brac{\overline{ \lambda_1}
			+ \frac{1}{2}C_{\mu}\overline{u_1}^2}\overline{c})\gdnota{\abs{\vm}} }{\vR^{\vm}(\abs{\vm}!)^{1-\delta}}\\
		&
		\small
		+
C_{\mu}^{-1}
		\sum_{\vzero< \vm < \vnu}\brac{\vnu \atop \vm} 
		\frac{ \overline{ \lambda_1}{\sigma} \rho^{\abs{\vnu-\vm}}\gdnota{\abs{\vnu-\vm}}}{\rho\vR^{{\vnu-\vm}}(\abs{\vnu-\vm}!)^{1-\delta}}  
		\sum_{0\leq \vell\leq \vm}
		\brac{\vm \atop \vell} 
		\frac{\overline{u_1}{\sigma} \rho^{\abs{\vm}}\gdnota{\abs{\vm-\vell}}}{\rho\vR^{{\vm-\vell}}(\abs{\vm-\vell}!)^{1-\delta}} 
		\frac{\overline{c}\gdnota{\abs{\vell}} }{\vR^{\vell}(\abs{\vell}!)^{1-\delta}}\\
		&
		\small
		+
		\frac{\overline{u_1}}{2}
		\sum_{\vzero< \vm < \vnu}\brac{\vnu \atop \vm} 
		\frac{\overline{u_1}{\sigma} \rho^{\abs{\vnu-\vm}}\gdnota{\abs{\vnu-\vm}}}{\rho\vR^{{\vnu-\vm}}(\abs{\vnu-\vm}!)^{1-\delta}}  
		\sum_{0\leq \vell\leq \vm}
		\brac{\vm \atop \vell} 
		\frac{\overline{u_1}{\sigma} \rho^{\abs{\vm}}\gdnota{\abs{\vm-\vell}}}{\rho\vR^{{\vm-\vell}}(\abs{\vm-\vell}!)^{1-\delta}} 
		\frac{\overline{c}\gdnota{\abs{\vell}} }{\vR^{\vell}(\abs{\vell}!)^{1-\delta}}.
\end{align*}
Extracting the term with the factorial and recalling that $C_\mu = \underline{a} \,\mu$ we find
\begin{equation*} 
	\begin{split}
		&\small\norm{   \partial^{\vnu}  u_1 }{ V}  \\
		& \small \leq
		\overline{u_1}
		\brac{
		\frac{\overline{a}+\overline{b} +\overline{ \lambda_1}\, \overline{c}}{\underline{a}\, \mu}
		+\frac{\overline{u_1}^2 \overline{c}}{2}
		}
		 \frac{{\sigma}\rho^{\abs{\vnu}}}{\rho^2 \vR^{\vnu}}
		 \frac{1}{(\abs{\vnu}!)^{1-\delta}}
		\sum_{\vzero < \vm \leq \vnu}\brac{\vnu \atop \vm} 
		\gdnota{\abs{\vnu-\vm}}  
		\gdnota{\abs{\vm}}\\
		&\small +
		\overline{u_1}
		\brac{
		\frac{\overline{ \lambda_1} \overline{c}\,{\sigma}}{\underline{a}\, \mu}
		+
		\frac{\overline{u_1}^2 \overline{c}\,{\sigma}}{2}
		}
		\frac{{\sigma} \rho^{\abs{\vnu}}}{\rho^2 \vR^{\vnu}}		
		 \frac{1}{(\abs{\vnu}!)^{1-\delta}}
		\sum_{\vzero< \vm < \vnu}
		\sum_{0\leq \vell\leq \vm}
		\brac{\vnu \atop \vm} 
		\brac{\vm \atop \vell} 
		\gdnota{\abs{\vnu-\vm}}  
		\gdnota{\abs{\vm-\vell}} 
		\gdnota{\abs{\vell}} 
	\end{split}
\end{equation*}
By Lemma \ref{appendix lem} the sums above can be estimated by $3 [\tfrac{1}{2}]_{|\vnu|}$ and $8 [\tfrac{1}{2}]_{|\vnu|}$, respectively, and hence 
\begin{equation*} 
	\begin{split}
		\norm{   \partial^{\vnu}  u_1 }{ V}  
		\leq
		\overline{u_1}
		\brac{\frac{3(\overline{a}+\overline{b})+{(3+8\sigma)}\overline{ \lambda_1}\, \overline{c} }{\underline{a} \, \mu}
		+
		{(3+8\sigma)}\frac{\overline{u_1}^2\,\overline{c}}{2}		
		}
		\frac{\sigma\rho^{\abs{\vnu}}}{\rho^2 \vR^{\vnu}}
	\frac{\gdnota{\abs{\vnu}}}{(\abs{\vnu}!)^{1-\delta}}
\end{split}
\end{equation*} 
The definition \eqref{def-contrast} of the coefficient contrast $\con_a$ and $\con_c$ implies that the term in the parentheses coincides with {$\rho$. This implies the statement \eqref{u V bound ref} and, thus,} finishes the proof. 
\end{proof}
\section{Applications and numerical experiments}\label{sec: num exp}
In this section we demonstrate how the regularity analysis from the previous sections can be used to rigorously justify the convergence of parametric integration in two particular applications: \linebreak i)~one-dimensional parametric integration with the Gauss-Legendre quadrature and ii)~high-dimensional integration with the Quasi-Monte Carlo method. The new contributions here are twofold. First, we compare the convergence behaviour for the analytic ($\delta = 1$) and Gevrey non-analytic ($\delta > 1$) cases. Second, our analysis of the QMC method extends the existing results from \cite{Gilbert2019,GilbertScheichl2} and demonstrate their consistency with the analysis of the source problem in \cite{KuoSchwabSloan2012}, see Remark \ref{QMC remark} below.

In what follows we focus on the analysis of the smallest eigenvalue $\lambda_1$. Notice, however, that if the assumptions of Theorem \ref{gevrey regularity for eigenpairs for general equ} are satisfied, the results immediately extend to $\cG(u_1)$, where $\cG \in V^*=H^{-1}(D)$ is a linear functional, since
\[
|\partial^{\vnu} \cG(u_1)| \leq \|\cG\|_{V^*} \|\partial^{\vnu} u_1\|_{V}
\]
and, thus, $\cG(u_1)$ satisfies the same type of estimate as \ref{lambda bound} for $\lambda_1$. 

\subsection{Gauss-Legendre quadrature} \label{sec: Gauss-Legendre Quadrature}
Let us fix the domain $D= (0,1)^2$ and consider the problem \eqref{gen equation} with $b\equiv 0$ and $c\equiv 1$ and $a$ is one of following functions 
\begin{equation}\label{def-a1}
a^{(1)} (\vx,y) = 2+\sin\brac{\pi (x_1+x_2+y)},
\end{equation}
\begin{equation}\label{def-a2}
a^{(2)} (\vx,y) = 1+\brac{x_1+x_2}\exp\brac{-\frac{1}{\sqrt{y+1}}}.
\end{equation}
Here $y$ is a scalar real random variable uniformly distributed in $[-1,1]$. The aim is to compute the (rescaled) expected value of the smallest eigenvalue
\begin{equation}\label{Elambda}
I(\lambda_1) =  \int_{-1}^1 \lambda_1(y) \, dy.
\end{equation}
Since $\lambda_1$ is not available in closed form, the integral \eqref{Elambda} can be approximated by numerical quadrature, e.g. the Gauss-Legendre quadrature, which will be efficient for the case of a single real-valued parameter. Let $\{\xi_i,w_i\}_{i=1}^n$ be the nodes and weights of the $n$-point Gauss-Legendre quadrature
\begin{equation}
Q_n[\lambda_1] =  \sum_{i=1}^n w_i \lambda_1(\xi_i).
\end{equation}
We are interested in the behaviour of the quadrature error
\begin{equation}
\varepsilon_n = |I(\lambda_1) - Q_n[\lambda_1]|
\end{equation}
with increasing $n$.  It is known that the convergence of $\varepsilon_n$ is strongly related to the regularity of $\lambda_1$ with respect to $y$. In particular \cite[Theorem 5.2]{ChernovSchwab2012} implies that
\begin{equation}
\varepsilon_n \leq C \exp( - r n^{1/\delta}).
\end{equation}
with positive constants $C$ and $r$ independent of $n$.
\begin{itemize}
\item[(1)]
In the case of the analytic diffusion coefficient $a^{(1)}$ as in \eqref{def-a1}, Theorem \ref{gevrey regularity for eigenpairs for general equ} implies that $\lambda_1$ is analytic in $y$, i.e. $\delta = 1$, and therefore we expect
\begin{equation}\label{eps1}
\varepsilon_n^{(1)} \leq C \exp (-rn).
\end{equation}
\item[(2)] The diffusion coefficient $a^{(2)}$ is not analytic near $y = -1$, but is Gevrey-$\delta$ uniformly for all $y \in [-1,1]$ with $\delta \geq 3$, see \cite{ChenRodino1996} and \cite[Section 6]{ChernovSchwab2012}. Theorem \ref{gevrey regularity for eigenpairs for general equ} implies that $\lambda_1$ is Gevrey-$\delta$ with the same $\delta = 3$ and hence we expect
\begin{equation}\label{eps2}
\varepsilon_n^{(2)} \leq C \exp (-rn^{1/3}).
\end{equation}
\end{itemize}
In order to observe the behaviour predicted in \eqref{eps1} and \eqref{eps2}, we solve deterministic equations~\eqref{variational transform} in every quadrature point $y = \xi_i$ numerically by the piecewise linear finite element method on a very fine uniform triangular mesh having $16.129$ degrees of freedom. Since $I(\lambda_1)$ is not available in closed form, we approximate it by the very fine Gauss-Legendre quadrature $Q_{n^*}[\lambda_1]$ with $n^* = 40$ quadrature nodes for $a^{(1)}$ and $n^* = 123$ quadrature nodes for~$a^{(2)}$. As an eigensolver we use the inverse power iteration with the absolute error tolerance of $10^{-14}$.

In {the left panel of} Figure \ref{fig:GL}, we plot the relative error $\varepsilon_n^{(1)}$ against the number of quadrature points $n$ in the semi-logarithmic scale. The reference line clearly shows the linear trend of the type $-r n + \log C$ and thereby confirms \eqref{eps1}.
 
In {the right panel of Figure \ref{fig:GL}}, we plot the relative error $\varepsilon_n^{(2)}$ with respect to the third root of the number of quadrature points $m := n^{1/3}$ in the semi-logarithmic scale. Here we can also observe the linear trend of the type $-r m + \log C$. This confirms \eqref{eps2} and thereby demonstrates the meaning and validity of Theorem \ref{gevrey regularity for eigenpairs for general equ}.

\begin{figure}[h]
\centerline{\includegraphics[width=1\textwidth]{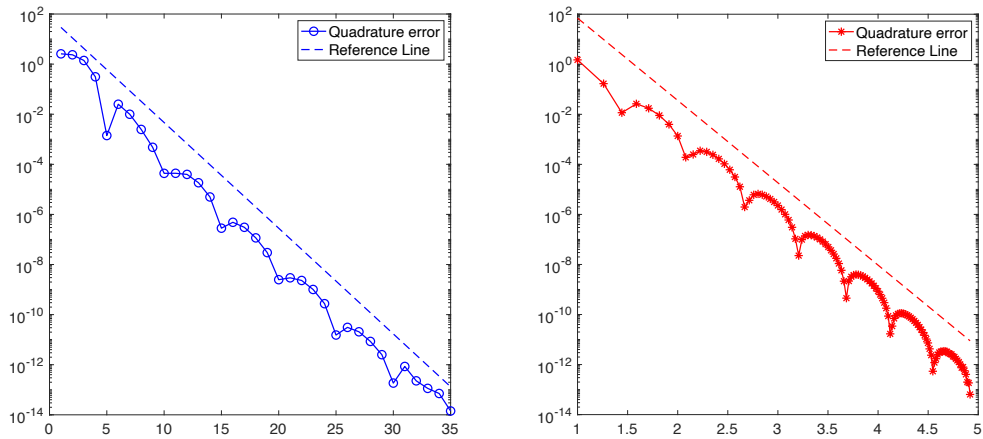}}
	\caption{The quadrature error $\varepsilon^{(1)}_n$ with respect to the number $n$ of quadrature points (left) and the quadrature error $\varepsilon^{(2)}_n$ with respect to $m = n^{1/3}$ (right).}\label{fig:GL}
\end{figure}

\subsection{Quasi-Monte Carlo method for Gevrey functions} \label{sec: QMC}

Let $Y=\left[-\frac{1}{2},  \frac{1}{2}\right]$, for any given $s\in \mN$  we denote by $\vy_s= (y_1,\dots,y_s,0,0,\dots) $ the $s$-dimensional truncation of $\vy \in U= Y^\mN$. For a function $F: Y^\mN \mapsto \mR $, our quantity of interest is the integral of the form
\begin{align}\label{approx integral}
I(F) =	
\int_{Y^\mN} F(\vy) \, d \vy
\end{align}

In this section we apply our main regularity result in Theorem \ref{gevrey regularity for eigenpairs for general equ} to analyse the convergence rate of Quasi-Monte Carlo (QMC) method for $F = \lambda_1$. (As mentioned at the beginning of this section, this result extends immediately to $F=\cG(u_1)$, where $\cG\in V^*:= H^{-1}(D)$ is a linear functional.) 
The QMC approximation reads
\begin{align}\label{QMC quad def}
	Q^{\Delta}_{s,n}(F) := 
	\frac{1}{n} \sum_{i=1}^n F\brac{\left\{ \frac{i \vz_{s}}{n} +\Delta  \right\} -\frac{1}{2}},
\end{align}
which is a randomly shifted lattice rule with the generating vector $\vz_{s} \in \mN^s$. Here~$\Delta$ is a random shift, which is uniformly distributed over the cube $(0,1)^s$ and $n$ is the number of quadrature points. The braces in \eqref{QMC quad def} indicate the fractional part of each component of the argument vector. Notice that $Q^{\Delta}_{s,n}(F)$ depends on the random shift and therefore is a random variable. A popular measure of accuracy is the root mean square error 
\[
{\rm RMSE} = \sqrt{\mathbb{E}|I(F)-Q^{\Delta}_{s,n}(F)|^2},
\]
where $\mathbb E$ is the expectation with respect to the random shifts $\Delta$. Moreover, $Q^{\Delta}_{s,n}(F)$
approximates only in the first $s$ components, therefore it is natural to introduce the truncation 
\begin{align}
I_s(F) =	
\int_{Y^s} F({\vy}_s)\, d y_1 \dots d y_s
\end{align}
and, using the triangle inequality to get the error decomposition
\begin{align} \label{error-deco}
{\rm RMSE}
	&\leq
	\abs{I(F)-I_s(F)}
	+
	\sqrt{\mE\brac{ \abs{I_s(F)-Q^{\Delta}_{s,n}(F)}^2}}.
\end{align}
Assume from now on that the assumptions of Theorem~\ref{gevrey regularity for eigenpairs for general equ} are valid.
The first summand on the right-hand side of \eqref{error-deco}, the truncation error, can only converge to zero, if $F$ becomes ``less dependent'' on $y_s$ as $s \to \infty$. 
A sufficient condition that rigorously implies the desired behaviour is
	\begin{align}\label{beta-lp}
		 \norm{\vbeta}{\ell^p}:= \bigg(\sum_{j = 1}^\infty \beta_j^p\bigg)^{\frac{1}{p}} < \infty
	\end{align}
for some $p \in (0,1]$ and $\beta_j:={\wtd R_j}^{-1}$, i.e. $\wtd R_s$ in Theorem~\ref{gevrey regularity for eigenpairs for general equ} grows sufficiently fast in $s$. Following closely the arguments in \cite[Theorem 4.1]{Gilbert2019}, this implies 
	\begin{align}\label{equ: lambda truncation error}
	\abs{I(F) - I_s(F)}
	\leq
	C_1\,s^{-2\brac{\frac{1}{p}-1}},
\end{align}
where  $C_1$ depends on $\delta \geq 1$, but is independent of $s$. For completeness, we provide the proof in the general case in Lemma \ref{general trunc theorem} in the Appendix. 

The estimate for the second summand in the right-hand side of \eqref{error-deco}, the quadrature error, is slightly more delicate, but can be constructed also for the case $\delta > 1$ following the arguments of \cite[Theorem 4.2]{Gilbert2019} and \cite[Theorem 6.4]{KuoSchwabSloan2012}. The complete proof is given in Lemma \ref{QMC theorem} in the Appendix. As a simple corollary, for a fixed integer $s$ and $n$ being a power of 2, a QMC quadrature rule $Q^{\Delta}_{s,n}$ can be explicitly constructed, such that
\begin{equation}\label{errorQMC}
	\sqrt{\mE\brac{ \abs{I_s(F)-Q^{\Delta}_{s,n}(F)}^2}}
	\leq C_2 n^{-\frac{1}{2\vartheta}},
\end{equation}
where $C_2$ is independent of $n$ and
\begin{align}\label{QMC convergence rate}
		\vartheta = 
		\left\{\begin{matrix}
			\omega &\text{for some } \omega \in (\tfrac{1}{2},1), & \text{when } p\in (0,\tfrac{2}{3\delta}],
			\\
			\frac{\delta p}{2-\delta p},    &  & \text{when } p\in (\tfrac{2}{3\delta},\tfrac{1}{\delta}]. 
		\end{matrix}\right. 
	\end{align}
This result requires assumptions of Theorem \ref{gevrey regularity for eigenpairs for general equ} and \eqref{beta-lp} in the reduced range $p \in (0,\delta^{-1})$. The result is still valid for $p = \delta^{-1}$ if \eqref{beta-lp} is replaced with $\|\vbeta\|_{\ell^p} < \sqrt{6}$. In this case the convergence rate deteriorates to the rate of the plain Monte Carlo estimator, that is
\begin{equation}
	\sqrt{\mE\brac{ \abs{I_s(F)-Q^{\rm MC}_{s,n}(F)}^2}}
	\leq C_3 n^{-\frac{1}{2}}.
\end{equation}
Here
\begin{equation}
Q^{\rm MC}_{s,n}(F) := \frac{1}{n} \sum_{i=1}^n F(\vy_s^{(i)})
\end{equation}
and $\vy_s^{(i)}$ are independent samples from the uniform distribution on $Y^s$. The constant~$C_3$ is determined by the variance of $F(\vy_s)$ and thereby is independent of $n$.

Observe that the Gevrey-$\delta$ non-analytic regularity (i.e., $\delta > 1$) has a more significant effect on the QMC error \eqref{errorQMC} rather than on the truncation error \eqref{equ: lambda truncation error}. For this reason in the forthcoming example we fix the length of the parameter vector to $s = 100$ terms and concentrate specifically on the behaviour of the QMC error. Let $D= (0,1)^2$ and consider the problem \eqref{gen equation} with $b\equiv 0$ and $c\equiv 1$ and $a$ being one of the following functions
\begin{equation}\label{QMC-a1-new}
	a^{(1)} (\vx,\vy) = 
	2+2\exp\brac{-{\zeta(5)}+\sum_{j=1}^{100}
		{j^{-5}}
		 \sin(j\pi  x_1)\sin(j\pi x_2)\,y_j},
\end{equation}
\begin{equation}\label{QMC-a2-new}
	a^{(2)} (\vx,\vy) = 3+\frac{1}{\zeta(5)} \sum_{j=1}^{100} j^{-5} \sin(j\pi  x_1)\sin(j\pi x_2)\exp\brac{
		-\frac{1}{y_j+\tfrac{1}{2}}}
	,
\end{equation}
Here $y_j$ is a scalar real random variable uniformly distributed in $[-\tfrac{1}{2},\tfrac{1}{2}]$ for all $j\in \mN$ and $\zeta(\cdot)$ denotes the Riemann zeta function. Clearly, for all $j\in \mN$, we have that $\vbeta^{(i)}\in \ell^p$ with any $p>\frac{1}{5}$. Moreover, $a^{(1)}$ is analytic in~$\vy$ ($\delta^{(1)} = 1$), whereas $a^{(2)}$ is Gevrey-$\delta$ with $\delta^{(2)} = 2$. From Theorem \ref{gevrey regularity for eigenpairs for general equ} we know that this regularity carries over to the corresponding smallest eigenvalues $\lambda_1^{(1)}$ and $\lambda_1^{(2)}$ with the same~$\delta$. To check this result numerically, we compute the eigenvalues by the piecewise linear finite element method on a very fine uniform triangular mesh having $16.129$ degrees of freedom (i.e. the mesh consists of congruent right isosceles triangles of the diameter $h = \sqrt{2}/128$), so that the effect of the finite element discretization is negligible. As in Section \ref{sec: Gauss-Legendre Quadrature}, we use the inverse power iteration with the absolute error tolerance of $10^{-14}$.
The outer expectation is approximated by the empirical mean of $R = 8$ runs. The $j$-th run corresponds to an independent sample of the random shift vector $\Delta^{(j)}$ drawn from the uniform distribution on the unit cube $(0,1)^s$. By $Q^{(j)}_{s,n}(\lambda_1^{(k)})$ we denote the corresponding QMC quadrature. Then the relative QMC error is given by 
\begin{equation}
	\varepsilon_n^{{\rm QMC},(k)}=\sqrt{
		\frac{1}{R} \sum_{j=1}^R
		 \abs{\frac{ I_s^*(\lambda_1^{(k)})-Q^{(j)}_{s,n}(\lambda_1^{(k)})}{I_s^*(\lambda_1^{(k)})}}^2},
		\end{equation}
and analogously for the plain Monte Carlo approximation $\varepsilon_n^{{\rm MC},(k)}$. In both cases the reference value $I_s^*(\lambda_1^{(k)})$ corresponds to the highest level of the QMC approximation.

Since $p \in (0,\frac{2}{3 \delta^{(2)}}) \subset (0,\frac{2}{3 \delta^{(1)}})$, we expect from the above theory that
$\varepsilon^{{\rm QMC},(1)}_n$ and $\varepsilon^{{\rm QMC},(2)}_n$ are approximately proportional to $n^{-1}$, whereas $\varepsilon^{{\rm MC},(1)}_n$ and $\varepsilon^{{\rm MC},(2)}_n$ are approximately proportional to $n^{-\frac{1}{2}}$. {In Figure \ref{fig:QMC}} we clearly observe that this convergence behaviour is reproduced.

\begin{figure}[h]
\centerline{\includegraphics[width=0.5\textwidth]{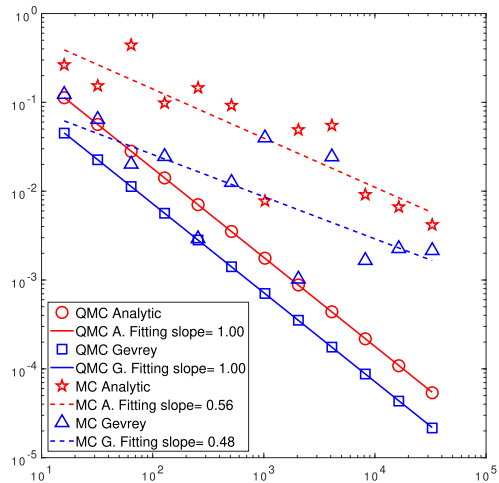}}
	\caption{{Convergence  of the QMC errors $\varepsilon_n^{{\rm QMC},(1)}$ and $\varepsilon_n^{{\rm QMC},(2)}$, and the MC errors $\varepsilon_n^{{\rm MC},(1)}$ and $\varepsilon_n^{{\rm MC},(2)}$ for the analytic and the Gevrey class setting.}}\label{fig:QMC}
\end{figure}

\section{APPENDIX}\label{sec: appendix}
Here we collect some combinatorial results required in the proof of Theorem \ref{gevrey regularity for eigenpairs for ref domain}, as well as novel estimates on the QMC quadrature error for Gevrey class functions needed in Subsection \ref{sec: QMC}.

\begin{lemma} \label{sum binomialcoef}
	For all $\vnu\in \mN^d_0$ and $0\leq r\leq \abs{ \vnu}$, we have
	$$
	\sum_{\vzero \leq \vm \leq \vnu \atop \abs{ \vm}=r}
	\brac{ \vnu \atop  \vm}
	=
	\brac{\abs{ \vnu} \atop r}.
	$$
\end{lemma}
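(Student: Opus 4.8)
The plan is to deduce the identity from the polynomial factorization $(1+t)^{\abs{\vnu}} = \prod_{j=1}^{d}(1+t)^{\nu_j}$ by comparing the coefficients of $t^r$ on the two sides. First I would expand each factor on the right via the elementary binomial theorem, $(1+t)^{\nu_j} = \sum_{m_j=0}^{\nu_j}\binom{\nu_j}{m_j}t^{m_j}$, and form the resulting Cauchy product. Collecting powers of $t$, the coefficient of $t^r$ on the right-hand side is exactly $\sum_{\vzero \leq \vm \leq \vnu,\ \abs{\vm}=r}\prod_{j=1}^{d}\binom{\nu_j}{m_j} = \sum_{\vzero \leq \vm \leq \vnu,\ \abs{\vm}=r}\binom{\vnu}{\vm}$, where the last equality is just the definition of the multi-index binomial coefficient. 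On the left-hand side, the binomial theorem gives the coefficient of $t^r$ as $\binom{\abs{\vnu}}{r}$. Equating these for $0 \leq r \leq \abs{\vnu}$ (outside that range both sides vanish) proves the claim.

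An equivalent, purely combinatorial route that could be presented instead is a double count: partition a set of $\abs{\vnu}$ elements into blocks $B_1,\dots,B_d$ of sizes $\nu_1,\dots,\nu_d$ and count its $r$-element subsets. Directly there are $\binom{\abs{\vnu}}{r}$ of them; sorting by the \emph{profile} $\vm = (\abs{S\cap B_1},\dots,\abs{S\cap B_d})$, which necessarily satisfies $\vzero \leq \vm \leq \vnu$ and $\abs{\vm} = r$, there are $\prod_{j}\binom{\nu_j}{m_j} = \binom{\vnu}{\vm}$ subsets of each profile, and summing over profiles gives the result. A third option is induction on $d$, peeling off the last variable and invoking the classical Vandermonde convolution $\sum_{k}\binom{a}{k}\binom{b}{r-k} = \binom{a+b}{r}$.

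There is essentially no analytic obstacle here; the only point requiring a little care is the multi-index bookkeeping, namely verifying that the index set $\{\vm : \vzero \leq \vm \leq \vnu,\ \abs{\vm} = r\}$ produced by collecting terms in the Cauchy product coincides exactly with the summation range in the statement, and checking that the degenerate cases $r = 0$, $r = \abs{\vnu}$, and $\vnu = \vzero$ are all covered by the same formula.
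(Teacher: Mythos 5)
Your first argument is correct and lands on the same coefficient-comparison idea as the paper, but reaches it by a more elementary route. The paper introduces the exponential functions $h(\vx)=e^{z\sum x_i}$, $g(\vx)=e^{\sum x_i}$, $f=hg$, takes the $\vnu$-th partial derivative at $\vx=\vzero$, applies the Leibniz general product rule, and then compares coefficients of $z^r$; this fits the paper's broader toolkit, where Leibniz-rule manipulations of multi-index derivatives are used throughout. You instead work directly with the polynomial factorization $(1+t)^{\abs{\vnu}}=\prod_{j}(1+t)^{\nu_j}$ and read off the coefficient of $t^r$ via the Cauchy product, which avoids any calculus and makes the identity visibly a generating-function statement. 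The two arguments are structurally isomorphic (your $t$ plays the role of the paper's $z$, and the Cauchy product is exactly what the Leibniz rule produces after evaluation at $\vx=\vzero$), so one buys brevity and elementarity, the other buys uniformity with the rest of the paper's proofs. Your two alternatives are also sound and genuinely distinct: the double-count by intersection profiles is the cleanest conceptual explanation of why the identity is true, and the induction via the Vandermonde convolution is the standard algebraic fallback. The only bookkeeping worth making explicit in a final write-up is the one you already flag: that in the Cauchy product the tuples $\vm$ contributing to $t^r$ are precisely those with $\vzero\leq\vm\leq\vnu$ and $\abs{\vm}=r$, and that $\binom{\vnu}{\vm}=\prod_j\binom{\nu_j}{m_j}$ is the paper's definition of the multi-index binomial coefficient.
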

\begin{proof}
	For a fixed $z\in \mR$ and $\vx\in \mR^d$, we define functions $h(\vx),g(\vx)$ and $f(\vx)$ as  
	\begin{align*}
		\small h(\vx)=\exp\brac{z \sum_{i=1}^d x_i},~~ 
		g(\vx)=\exp\brac{\sum_{i=1}^d x_i}, ~~
		f(\vx)=h(\vx)\,g(\vx)=\exp\bigg((z+1) \sum_{i=1}^d x_i\bigg).
	\end{align*}
On the one hand, for every $ \vnu\in \mN^d_0$, taking the $\vnu$-derivative with respect to $\vx$ and evaluating at $\vx=\vzero$, we derive
	\begin{align*}
		\partial^{ \vnu} f(\vzero)
		=
		(z+1)^{\abs{\vnu}} f(\vzero)
		=
		\sum_{r=0}^{\abs{ \vnu}}
		\brac{\abs{\vnu}\atop r}
		z^{r}.
	\end{align*}
	On the other hand, by the Leibniz product rule, we get
	\begin{align*}
		\partial^{ \vnu} f(\vzero)
		=
		\sum_{\vzero\leq \vm\leq \vnu}
		\brac{\vnu \atop \vm}
		\partial^{\vnu-\vm}
		g(\vzero)
		\partial^{\vm}
		h(\vzero)
		=
		\sum_{\vzero\leq \vm\leq \vnu}
		\brac{\vnu \atop \vm}
		z^{\abs{\vm}}
		=
		\sum_{r=0}^{\abs{ \vnu}}
		\sum_{\vzero \leq \vm \leq \vnu \atop \abs{ \vm}=r}
		\brac{\vnu \atop \vm}
		z^{r}.
	\end{align*}
	The proof is concluded by comparing the coefficients in front of $z^r$ in the above equations.
\end{proof}
\begin{lemma}\label{appendix lem}
	For three multiindices $\vnu, \vm, \vell \in \cF$ it holds that
\begin{equation}
		\sum_{0<\vm \leq \vnu}
		\brac{\vnu \atop \vm} 
		[\tfrac{1}{2}]_{|\vnu-\vm|}
		[\tfrac{1}{2}]_{|\vm|}
		\leq 	
		3 [\tfrac{1}{2}]_{|\vnu|}\label{multiindex-est-3},
	\end{equation}
	\begin{equation}
	\sum_{0<\vm< \vnu}
	\sum_{0\leq\vell\leq \vm}
	\brac{\vnu \atop \vm} 
	\brac{\vm \atop \vell} 
	[\tfrac{1}{2}]_{|\vnu-\vm|}
	[\tfrac{1}{2}]_{|\vm-\vell|}
	[\tfrac{1}{2}]_{|\vm|}
	\leq 		
	8 [\tfrac{1}{2}]_{|\vnu|}\label{multiindex-est-4}.
\end{equation}
{Moreover, in \eqref{multiindex-est-3}, the two sides are equal if and only if $\abs{\vnu}\geq 2$.}	
\end{lemma}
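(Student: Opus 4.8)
The plan is to collapse each multiindex sum to a one–dimensional sum indexed by the total order $|\vm|$ (and, for \eqref{multiindex-est-4}, also $|\vell|$) by means of the combinatorial identity of Lemma~\ref{sum binomialcoef}, and then to read off the result from the scalar falling–factorial identities of Lemma~\ref{lem:ff-main-id}. What makes this possible is the elementary fact that $|\vnu-\vm|=|\vnu|-|\vm|$ whenever $\vm\leq\vnu$ (and likewise $|\vm-\vell|=|\vm|-|\vell|$ when $\vell\leq\vm$): once the summands are grouped by the value of $|\vm|$, every $\gdnota{\cdot}$ factor depends only on that value and on $|\vnu|$, so the binomial coefficients can be summed out.

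For \eqref{multiindex-est-3}, I would regroup by $r=|\vm|$ and apply Lemma~\ref{sum binomialcoef} to the inner sum over multiindices of fixed order:
\begin{equation*}
\sum_{\vzero<\vm\leq\vnu}\binom{\vnu}{\vm}\gdnota{|\vnu-\vm|}\gdnota{|\vm|}
=\sum_{r=1}^{|\vnu|}\gdnota{|\vnu|-r}\,\gdnota{r}\sum_{\vzero\leq\vm\leq\vnu\atop|\vm|=r}\binom{\vnu}{\vm}
=\sum_{r=1}^{|\vnu|}\binom{|\vnu|}{r}\gdnota{r}\,\gdnota{|\vnu|-r}.
\end{equation*}
The right–hand side is precisely the second sum of Lemma~\ref{lem:ff-main-id} with $n=|\vnu|$, hence equals $3\gdnota{|\vnu|}$ when $|\vnu|\geq2$ and is strictly smaller otherwise. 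For the ``moreover'' part it then remains only to inspect the two degenerate cases directly: for $|\vnu|=0$ the sum is empty, and for $|\vnu|=1$ it equals $\gdnota{1}=\tfrac12<\tfrac32=3\gdnota{1}$, so the inequality is strict precisely when $|\vnu|\leq1$.

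For \eqref{multiindex-est-4}, I would carry out the inner sum over $\vell$ first. Fixing $\vm$ and grouping by $s=|\vell|$, Lemma~\ref{sum binomialcoef} reduces $\sum_{\vzero\leq\vell\leq\vm}\binom{\vm}{\vell}\gdnota{|\vm-\vell|}\gdnota{|\vell|}$ to $\sum_{s=0}^{|\vm|}\binom{|\vm|}{s}\gdnota{s}\,\gdnota{|\vm|-s}$, which is at most $4\gdnota{|\vm|}$ by the third (inequality) identity of Lemma~\ref{lem:ff-main-id}. Substituting this bounds the double sum by $4\sum_{\vzero<\vm<\vnu}\binom{\vnu}{\vm}\gdnota{|\vnu-\vm|}\gdnota{|\vm|}$. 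I would then group this remaining sum by $r=|\vm|$; for $1\leq r\leq|\vnu|-1$ the constraint $\vzero<\vm<\vnu$ agrees with $\vzero\leq\vm\leq\vnu$ on the level set $|\vm|=r$ (since $\vm\leq\vnu$ together with $|\vm|=|\vnu|$ forces $\vm=\vnu$), so Lemma~\ref{sum binomialcoef} turns it into $4\sum_{r=1}^{|\vnu|-1}\binom{|\vnu|}{r}\gdnota{r}\,\gdnota{|\vnu|-r}$, which is at most $4\cdot2\gdnota{|\vnu|}=8\gdnota{|\vnu|}$ by the first (inequality) identity of Lemma~\ref{lem:ff-main-id}.

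Every step is just an invocation of Lemma~\ref{sum binomialcoef}, of Lemma~\ref{lem:ff-main-id}, or of the identity $|\vnu-\vm|=|\vnu|-|\vm|$, so I do not anticipate a genuine obstacle. The only points needing some care are matching the order–level sets $\{|\vm|=r\}$ with the open and closed multiindex intervals appearing in the two sums, and handling the small cases $|\vnu|\in\{0,1\}$ by hand for the sharpness statement.
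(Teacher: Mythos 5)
Your argument is correct and is essentially the paper's own: collapse each multiindex sum to a scalar sum over $r=|\vm|$ (and $s=|\vell|$ for the inner sum) using Lemma~\ref{sum binomialcoef}, then invoke the falling-factorial identities of Lemma~\ref{lem:ff-main-id}; the paper dispatches \eqref{multiindex-est-4} with ``analogous, apply Lemma~\ref{lem:ff-main-id} twice,'' which you have simply written out, and you also supply the explicit $|\vnu|\in\{0,1\}$ check for the sharpness claim that the paper omits. One point worth flagging: you have (correctly) read the last falling-factorial factor in \eqref{multiindex-est-4} as $\gdnota{|\vell|}$ rather than the printed $\gdnota{|\vm|}$ --- that printed factor is a typo, since the application in the proof of Theorem~\ref{gevrey regularity for eigenpairs for ref domain} uses $\gdnota{|\vell|}$ and the bound as literally printed already fails for $|\vm|=2$, so your silent repair of the statement is the right thing to do.
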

\begin{proof}
For the first statement, appying Lemma \ref{sum binomialcoef}  and Lemma \ref{lem:ff-main-id}, we arrive at
\begin{equation*}
\begin{split}
	\sum_{0<\vm \leq \vnu}
	\brac{\vnu \atop \vm } 
	[\tfrac{1}{2}]_{|\vnu-\vm|}
	[\tfrac{1}{2}]_{|\vm|}
	&=
	\sum_{r=1}^{\abs{\vnu}}
	\sum_{\vzero \leq \vm \leq \vnu \atop \abs{ \vm}=r} \brac{\vnu \atop r}
	[\tfrac{1}{2}]_{|\vnu|-r}
	[\tfrac{1}{2}]_{r}\\
	&=
	\sum_{r=1}^{\abs{\vnu}}
	\brac{|\vnu| \atop r} 
	[\tfrac{1}{2}]_{|\vnu|-r}
	[\tfrac{1}{2}]_{r}
	\leq
	3 [\tfrac{1}{2}]_{|\vnu|}.
	\end{split}
\end{equation*}
According to Lemma \ref{lem:ff-main-id}, the inequality sign in this estimate can be replaced by the equality sign for $|\vnu|  \geq 2$ .
The proof of estimate \eqref{multiindex-est-4} is analogous. For this we apply Lemma \ref{lem:ff-main-id} twice to obtain the final result.
\end{proof}
\begin{lemma} \label{general trunc theorem}
	Let $F$ be a smooth mapping $Y^\mN \mapsto \mR$. 
	Suppose that there exist  $\delta \geq 1$, $p\in (0,1]$, a constant $C_F>0$ and a positive sequence $\vbeta$ such that 
	\begin{align}\label{QMC lemma assumption}
		\abs{\partial^{\vnu} F (\vecxi) }
		\leq
		C_F  \, \vbeta^{\vnu} (\abs{\vnu}!)^{\delta} 
		\quad 
		\text{ and }
		\quad
		\norm{\vbeta}{\ell^p}:= \bigg(\sum_{j\geq 1} \beta_j^p\bigg)^{\frac{1}{p}} < \infty
	\end{align}
	for all $\vecxi \in Y^\mN$ and $\vnu \in \cF$. 
	Then the truncation error is bounded by
	\begin{align}\label{equ: truncation error}
		\abs{I(F) - I_s(F)}
		\leq
		C_F \,C_{\delta}\, s^{-2\brac{\frac{1}{p}-1}},
	\end{align}
	where $C_{\delta}= \frac{2^{\delta}}{24}
	\norm{\vbeta}{\ell^p}^2 \min\brac{\frac{p^2}{(1-p)^2},1}$.
\end{lemma}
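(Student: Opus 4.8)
The plan is to adapt the second-order truncation argument of \cite[Theorem~4.1]{Gilbert2019} and \cite[Theorem~6.4]{KuoSchwabSloan2012} to the Gevrey-$\delta$ growth in \eqref{QMC lemma assumption}, assuming (which is standard in this context and implicit in the notion of $s$-term truncation) that $\vbeta$ is non-increasing. First I would observe that $F(\vy_s)$ does not depend on $y_{s+1},y_{s+2},\dots$, so $I_s(F)=\int_{Y^\mN}F(\vy_s)\,d\vy$ and hence $I(F)-I_s(F)=\int_{Y^\mN}\bigl(F(\vy)-F(\vy_s)\bigr)\,d\vy$. Setting $\vy^{[k]}:=(y_1,\dots,y_k,0,0,\dots)$, so that $\vy^{[s]}=\vy_s$ and $F(\vy^{[N]})\to F(\vy)$ as $N\to\infty$ (which follows from $\vbeta\in\ell^1$ and the uniform bound $\norm{\partial_k F}{\infty}\le C_F\beta_k$, exactly as in the cited references), the fundamental theorem of calculus in the single variable $y_k$ telescopes the difference to
\[
F(\vy)-F(\vy_s)=\sum_{k=s+1}^\infty\int_0^{y_k}\partial_k F(y_1,\dots,y_{k-1},t,0,0,\dots)\,dt .
\]
Integrating over $Y^\mN$ and noting that the $k$-th summand depends only on $y_1,\dots,y_k$ leaves a sum of finite-dimensional integrals, all integrations over $y_{k+1},y_{k+2},\dots$ being trivial.

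The crux --- and the reason the exponent in \eqref{equ: truncation error} is $2(\tfrac1p-1)$ and not $\tfrac1p-1$ --- is that the first-order contribution cancels by the symmetry of $Y$: $\int_{-1/2}^{1/2}\int_0^{y_k}c\,dt\,dy_k=c\int_{-1/2}^{1/2}y_k\,dy_k=0$ for every constant $c$. I would therefore subtract $\partial_k F(y_1,\dots,y_{k-1},0,0,\dots)$ inside the inner integral and apply the fundamental theorem of calculus once more in the $k$-th variable, which rewrites the $k$-th summand as
\[
\int_{Y^{k-1}}\int_{-1/2}^{1/2}\int_0^{y_k}\int_0^t\partial_k^2F(y_1,\dots,y_{k-1},\sigma,0,0,\dots)\,d\sigma\,dt\,dy_k\,dy_1\cdots dy_{k-1}.
\]
Inserting the Gevrey bound $\norm{\partial_k^2F}{\infty}\le C_F\beta_k^2\,(2!)^\delta=2^\delta C_F\beta_k^2$, using that each of the integrations over $y_1,\dots,y_{k-1}$ contributes a factor $1$, and evaluating the elementary integral $\int_{-1/2}^{1/2}\tfrac{y_k^2}{2}\,dy_k=\tfrac1{24}$, I obtain that the $k$-th summand is bounded by $\tfrac{2^\delta}{24}C_F\beta_k^2$, whence $|I(F)-I_s(F)|\le\tfrac{2^\delta}{24}C_F\sum_{k>s}\beta_k^2$.

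It then remains to bound the tail $\sum_{k>s}\beta_k^2$ using $\vbeta\in\ell^p$ with $p\in(0,1]$; since $\vbeta$ is non-increasing, $\beta_k\le\norm{\vbeta}{\ell^p}k^{-1/p}$. On the one hand $\sum_{k>s}\beta_k^2\le\bigl(\sum_{k>s}\beta_k\bigr)^2\le\tfrac{p^2}{(1-p)^2}\norm{\vbeta}{\ell^p}^2 s^{-2(1/p-1)}$ via $\sum_{k>s}k^{-1/p}\le\int_s^\infty x^{-1/p}\,dx=\tfrac{p}{1-p}s^{-(1/p-1)}$; on the other hand $\sum_{k>s}\beta_k^2\le\norm{\vbeta}{\ell^p}^2\sum_{k>s}k^{-2/p}\le\tfrac{p}{2-p}\norm{\vbeta}{\ell^p}^2 s^{-(2/p-1)}\le\norm{\vbeta}{\ell^p}^2 s^{-2(1/p-1)}$, using $\tfrac{p}{2-p}\le1$ and $s^{-(2/p-1)}\le s^{-(2/p-2)}$ for $p\le1$ and $s\ge1$. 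Taking the smaller of the two prefactors yields exactly $C_\delta=\tfrac{2^\delta}{24}\norm{\vbeta}{\ell^p}^2\min\!\bigl(\tfrac{p^2}{(1-p)^2},1\bigr)$. I expect the main obstacle to be organisational rather than computational: arranging the double application of the fundamental theorem of calculus so that the first-order cancellation is manifest, and justifying the interchange of the infinite sum with the integration over $Y^\mN$ (dominated by $\sum_k\beta_k^2<\infty$); once that is in place the estimate reduces to the single integral $\int_{-1/2}^{1/2}y^2/2\,dy$ and the $\ell^p$ tail bound above.
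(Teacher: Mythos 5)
Your proof is correct, but it takes a genuinely different route from the paper's. The paper expands $F(\vecxi)$ in a single second-order multivariate Taylor polynomial around $\vecxi_s$, so the remainder involves all cross-derivatives $\partial^{\ve_i+\ve_j}F(\vupsilon)$ with $i,j>s$ evaluated at one intermediate point $\vupsilon$; after integration the first-order block vanishes by symmetry and the remainder is bounded by $\tfrac{2^\delta}{24}C_F\bigl(\sum_{j>s}\beta_j\bigr)^2$, with the tail estimate imported wholesale from \cite[Theorem~5.1]{KuoSchwabSloan2012}. You instead telescope coordinate-by-coordinate, apply the one-dimensional fundamental theorem of calculus twice in each tail variable $y_k$, and only ever need the pure second derivatives $\partial_k^2F$. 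This yields the strictly smaller intermediate quantity $\tfrac{2^\delta}{24}C_F\sum_{k>s}\beta_k^2$ in place of $\tfrac{2^\delta}{24}C_F\bigl(\sum_{k>s}\beta_k\bigr)^2$, and it sidesteps the mild awkwardness of invoking Lagrange-form Taylor's theorem for a function of countably many variables --- the telescoping sum handles the passage $N\to\infty$ explicitly via the $\ell^1$ summability of $\vbeta$. Your two-pronged tail estimate then reproduces exactly the stated $C_\delta$. The only thing worth flagging is that you make the monotonicity $\beta_1\geq\beta_2\geq\cdots$ explicit (needed for $\beta_k\leq\|\vbeta\|_{\ell^p}k^{-1/p}$); the paper's statement does not list it, but it is equally implicit there, since the cited tail bound from \cite{KuoSchwabSloan2012} requires the same ordering. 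In short: correct, arguably cleaner in the infinite-dimensional bookkeeping, and slightly sharper at the intermediate step, arriving at the identical final constant.
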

\begin{proof} Let us fix an arbitrary $\vecxi = (\xi_1,\dots,\xi_s, \dots) \in Y^\mN$ and denote by $\vecxi_s = (\xi_1,\dots,\xi_s, 0,0,\dots) \in Y^\mN$ its truncation.
Then Taylor’s Theorem implies 
	\begin{align*}
		F(\vecxi)
		=
		F(\vecxi_s)
		+
		\sum_{i\geq s+1}
		\partial^{\ve_j} F(\vecxi_s)\, \xi_j
		+
		\frac{1}{2!}
		\sum_{i\geq s+1}
		\sum_{j\geq s+1}
		\partial^{\ve_i+\ve_j} F(\vupsilon)\, \xi_i \xi_j,
	\end{align*}
	where $\ve_i$ is $i$-th vector unit and $\vupsilon\in\sett{\vupsilon\in Y^\mN: \exists t\in [0,1] \text{ s.t } \vupsilon=t\vecxi+(1-t)\vecxi_s}$ is a point between $\vecxi$ and $\vecxi_s$. Integrating both sides with respect to $\vecxi$ over $Y^\mN$ we obtain
	\begin{align*}
		I(F)-I_s(F)
		=
		\sum_{i\geq s+1}
		\int_{Y^\mN} 
		\partial^{\ve_j} F(\vecxi_s)\, \xi_j \, d \vecxi
		+
		\frac{1}{2!}
		\sum_{i\geq s+1
			\atop j\geq s+1}
		\int_{Y^\mN} 
		\partial^{\ve_i+\ve_j} F(\vupsilon)\, \xi_i \xi_j \, d \vecxi.
	\end{align*}
The first term in the right-hand side vanishes, since for every $j\geq s+1$ we have
	\begin{align*}
		\int_{Y^\mN} \partial^{\ve_j} F(\vecxi_s)\, \xi_j\,  d \vecxi
		=
		\int_{Y^s} \partial^{\ve_j} F(\xi_1,\dots,\xi_s,0,0,\dots)\, d \xi_1 \dots d \xi_s
		\, \int_{-\frac{1}{2}}^{\frac{1}{2}} \xi_j \, d \xi_j 
		=
		0.
	\end{align*}
	Noting \eqref{QMC lemma assumption} and $\abs{\xi_i}\leq \frac{1}{2}$ for all $i\in\mN$, we obtain
	\begin{align*}
		\abs{\int_{Y^\mN} 
			\partial^{\ve_i+\ve_j} F(\vupsilon)\, \xi_i \xi_j \,  d \vecxi}
		\leq
		\frac{1}{12}\,
		C_F
		\beta_i \beta_j
		(2!)^\delta.
	\end{align*}
	From this and the triangle inequality it follows that the error is bounded by
	\begin{align}\label{tail sum}
		\abs{I(F)-I_s(F)}
		\leq
		\frac{2^{\delta}}{24}
		C_F
		\brac{
			\sum_{j\geq s+1}
			\beta_j}^2.
	\end{align}
	Finally, in \cite[Theorem 5.1]{KuoSchwabSloan2012} it was shown that under \eqref{QMC lemma assumption} the tail of the
	sum over $\beta_j$ is bounded above by
	\begin{align*}
		\sum_{j\geq s+1}
		\beta_j\leq 
		\min\brac{\frac{p}{1-p},1}\norm{\vbeta}{\ell^p} s^{-\frac{1}{p}+1}.
	\end{align*}
	Substituting it into \eqref{tail sum} completes the proof.
\end{proof}

Denote $\{1:s\} = \{1, \dots, s\}$ and let $\bga = (\gamma_{\mfu})_{\mathfrak{u}\subseteq \{1:s\} }$ be a sequence of positive weights. We define the weighted Sobolev space of mixed first order derivatives 
$\mW_{\bga}(Y^s)$ as the collection of all functions $F : Y^s \mapsto \mR$ such that
\begin{align}\label{weighted norm def}
	\norm{F}{\mW_{\bga}(Y^s)}^2
	=
	\sum_{\mfu \subseteq \{1:s\} }
	\frac{1}{\gamma_{\mfu}} 
	\int_{Y^{\abs{\mfu}}}
	\brac{	\int_{Y^{\abs{\bar{\mfu}}}}
		\frac{\partial^{\abs{\mfu}} F}{\partial \vecxi_{\mfu}}(\vecxi)
		d\vecxi_{\bar{\mfu}}
	}^2
	d\vecxi_{\mfu} < \infty
\end{align}
Here $\bar{\mfu}:= \{1:s\}\setminus \mfu $ and $\frac{\partial^{\abs{\mfu}} F}{\partial \vecxi_{\mfu}}$ denotes the mixed first derivatives of $F$ with respect to the variable $\vecxi_{\mfu}=(\xi_j)_{j\in \mfu}$. The weight sequence $(\gamma_{\mfu})_{\mfu \subseteq \{1:s\}}$ is associated with each subset of the variables to moderate its relative importance with respect to the other subsets. With an appropriate choice of the weight we can derive an error bound, which is independent of the dimension~$s$. Moreover, we need some structure
of the weight for the component-by-component (CBC) construction cost to be feasible, see e.g. \cite{KuoNuyens2016,KuoSchwabSloan2012,KuoSchwabSloan2013}. Different types of weights have been considered depending on the problem and the estimation of $\frac{\partial^{\abs{\mfu}} F}{\partial \vecxi_{\mfu}}$.
We utilize the following particular result on the error of the QMC quadrature \eqref{QMC quad def} with $\vartheta\in \left(\tfrac{1}{2},1 \right]$, see, e.g., \cite[Theorem 4.1]{KuoSchwabSloan2013} and \cite[Theorem 2.1]{KuoSchwabSloan2012}
\begin{align}\label{QMC error}
	\sqrt{\mE\brac{\abs{I_s(F)-Q^{\Delta}_{s,n}(F)}^2}}
	\leq
	\brac{\sum_{\mfu\subseteq \{1:s\}}
		\gamma_{\mfu}^{\vartheta} \brac{\frac{2 \zeta(2\vartheta)}{(2\pi^2)^\vartheta} }^{\abs{\mfu}}
	}^\frac{1}{2\vartheta}
	\norm{F}{\mW_{\bga}(Y^s)}
	\varphi(n)^{-\frac{1}{2\vartheta}}.
\end{align}
Here $\mE$ denotes the expectation with respect to random shift $\Delta$, $\varphi(\cdot)$ denotes Euler's totient function and $\zeta(\cdot)$ denotes the Riemann zeta function. The following lemma contains the convergence estimate of the QMC quadrature \eqref{QMC quad def} for numerical integration of Gevrey-$\delta$ functions $F$ with $\delta \geq 1$.

\begin{lemma}\label{QMC theorem}
	Suppose the assumptions of Lemma \ref{general trunc theorem} are satisfied for the reduced range $p \in (0,\delta^{-1}]$ and consider the approximation of the integral \eqref{approx integral} by the randomly shifted lattice rule \eqref{QMC quad def} with $n=2^m$ quadrature points, $m \in \mathbb N$. When $p=\delta^{-1}$, assume additionally that $\|\vbeta\|_{\ell^p} < \sqrt{6}.$
	Then for
	\begin{align*}
		\vartheta = 
		\left\{\begin{matrix}
			\omega &\text{for some } \omega \in (\tfrac{1}{2},1), & \text{when } p\in (0,\tfrac{2}{3\delta}],
			\\
			\frac{\delta p}{2-\delta p},    &  & \text{when } p\in (\tfrac{2}{3\delta},\tfrac{1}{\delta}],
		\end{matrix}\right. 
	\end{align*}
	and the weights 
	\begin{align}\label{QMC weight}
		\gamma_{\mfu} = \brac{(\abs{\mfu}!)^{\delta} \prod_{j\in \mfu } \frac{ \beta_j}{\sqrt{\phi(\vartheta)}}}^{\frac{2}{1+\vartheta}},
		\text{ where }\quad
		\phi(\vartheta) = \frac{2 \zeta(2\vartheta)}{(2\pi^2)^\vartheta}
	\end{align}
	there exists a constant $C_{s,\gamma,\vartheta}$ independent of $n$ such that
	\begin{align}\label{equ: QMC error}
		\sqrt{\mE\brac{\abs{I_s(F)-Q^{\Delta}_{s,n}(F)}^2}}
		\leq
		C_{s,\gamma,\vartheta}^{\frac{1}{2}}\,\brac{\frac{n}{2}}^{-\frac{1}{2\vartheta}}.
	\end{align}
\end{lemma}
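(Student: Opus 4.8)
The plan is to feed the prescribed $\vartheta$ and the weights $\bga$ into the abstract randomly shifted lattice rule bound \eqref{QMC error}, which is valid precisely for $\vartheta\in(\tfrac12,1]$. First I would check admissibility of $\vartheta$: for $p\in(0,\tfrac{2}{3\delta}]$ one takes $\vartheta=\omega\in(\tfrac12,1)$ by assumption, while for $p\in(\tfrac{2}{3\delta},\tfrac1\delta]$ the map $p\mapsto\frac{\delta p}{2-\delta p}$ carries this interval bijectively onto $(\tfrac12,1]$. Next, since $n=2^m$, Euler's totient satisfies $\varphi(n)=\varphi(2^m)=2^{m-1}=n/2$, so the factor $\varphi(n)^{-1/(2\vartheta)}$ in \eqref{QMC error} is already the $(n/2)^{-1/(2\vartheta)}$ appearing in \eqref{equ: QMC error}. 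It then remains to bound the two $n$-independent ingredients of \eqref{QMC error}, namely $\norm{F}{\mW_{\bga}(Y^s)}$ and the weighted sum $\sum_{\mfu\subseteq\{1:s\}}\gamma_{\mfu}^{\vartheta}\phi(\vartheta)^{|\mfu|}$, by a quantity depending on $s,\bga,\vartheta$ but not on~$n$.

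For the weighted norm I would observe that the mixed first derivative $\frac{\partial^{\abs{\mfu}}F}{\partial\vecxi_{\mfu}}$ equals $\partial^{\vnu}F$ for the $0$--$1$ multiindex $\vnu=\sum_{j\in\mfu}\ve_j$, which satisfies $\abs{\vnu}=\abs{\mfu}$ and $\vbeta^{\vnu}=\prod_{j\in\mfu}\beta_j$. Hence assumption \eqref{QMC lemma assumption} gives $\abs{\frac{\partial^{\abs{\mfu}}F}{\partial\vecxi_{\mfu}}(\vecxi)}\le C_F\,(\abs{\mfu}!)^{\delta}\prod_{j\in\mfu}\beta_j$ for all $\vecxi\in Y^{\mN}$; since $|Y|=1$, integrating out $\vecxi_{\bar{\mfu}}$ in \eqref{weighted norm def}, squaring and integrating over $\vecxi_{\mfu}$ yields
\begin{equation*}
\norm{F}{\mW_{\bga}(Y^s)}^2\le C_F^2\sum_{\mfu\subseteq\{1:s\}}\frac{(\abs{\mfu}!)^{2\delta}}{\gamma_{\mfu}}\prod_{j\in\mfu}\beta_j^2.
\end{equation*}
The crucial point is the \emph{balancing identity} behind the product-and-order dependent weights \eqref{QMC weight}: inserting $\gamma_{\mfu}=\big((\abs{\mfu}!)^{\delta}\prod_{j\in\mfu}\beta_j/\sqrt{\phi(\vartheta)}\big)^{2/(1+\vartheta)}$ and matching the exponents of $\abs{\mfu}!$, of each $\beta_j$ and of $\phi(\vartheta)$ shows that
\begin{equation*}
\frac{(\abs{\mfu}!)^{2\delta}}{\gamma_{\mfu}}\prod_{j\in\mfu}\beta_j^2=\gamma_{\mfu}^{\vartheta}\,\phi(\vartheta)^{\abs{\mfu}},\qquad\mfu\subseteq\{1:s\}.
\end{equation*}

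Writing $S_s:=\sum_{\mfu\subseteq\{1:s\}}\gamma_{\mfu}^{\vartheta}\phi(\vartheta)^{\abs{\mfu}}$, the identity gives $\norm{F}{\mW_{\bga}(Y^s)}\le C_F S_s^{1/2}$, so the prefactor in \eqref{QMC error} collapses to $S_s^{1/(2\vartheta)}\cdot C_F S_s^{1/2}=C_F S_s^{(1+\vartheta)/(2\vartheta)}$; together with $\varphi(n)=n/2$ this produces \eqref{equ: QMC error} with $C_{s,\gamma,\vartheta}=C_F^2 S_s^{(1+\vartheta)/\vartheta}$. For a fixed~$s$ the sum $S_s$ runs over finitely many subsets and is therefore finite, which already establishes the lemma as stated. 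To see moreover that $S_s$ stays controlled as $s\to\infty$ in the relevant parameter range (in the spirit of \cite{KuoSchwabSloan2012}), I would group subsets by cardinality $\ell=\abs{\mfu}$ and use $\sum_{\mfu\subseteq\{1:s\},\,\abs{\mfu}=\ell}\prod_{j\in\mfu}\beta_j^{q}\le\frac{1}{\ell!}\big(\sum_{j\ge1}\beta_j^{q}\big)^{\ell}=\frac{1}{\ell!}\norm{\vbeta}{\ell^q}^{q\ell}$ with $q:=\frac{2\vartheta}{1+\vartheta}$, reducing $S_s$ to a series $\sum_{\ell\ge0}(\ell!)^{\delta q-1}\big(\phi(\vartheta)^{1/(1+\vartheta)}\norm{\vbeta}{\ell^q}^{q}\big)^{\ell}$.

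The prescribed $\vartheta$ is calibrated exactly so that $q\ge p$ (one checks $q>\tfrac23\ge p$ in the first regime and $q=\delta p\ge p$ in the second), whence $\norm{\vbeta}{\ell^q}\le\norm{\vbeta}{\ell^p}<\infty$, and so that $\vartheta\in(\tfrac12,1]$; the summability of the $\ell$-series then follows by the ratio test, the endpoint hypothesis $\norm{\vbeta}{\ell^{1/\delta}}<\sqrt6$ ensuring at $p=\delta^{-1}$ (where $\vartheta=1$, $q=1$) that the geometric ratio $\phi(1)^{1/2}\norm{\vbeta}{\ell^1}=\norm{\vbeta}{\ell^1}/\sqrt6<1$ in the case $\delta=1$. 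I expect the principal technical obstacle to be precisely this last convergence bookkeeping: tracking the constant $\phi(\vartheta)$ across the two $p$-regimes, verifying the admissible range of $\vartheta$, and pinning down the borderline case $p=\delta^{-1}$, which is exactly where the restriction $p\in(0,\delta^{-1}]$ and the threshold $\sqrt6$ enter. This step parallels \cite[Theorem 4.2]{Gilbert2019} and \cite[Theorem 6.4]{KuoSchwabSloan2012}.
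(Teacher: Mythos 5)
Your derivation up to the point where you reduce the error bound to controlling
$S_s=\sum_{\mfu\subseteq\{1:s\}}\gamma_{\mfu}^{\vartheta}\phi(\vartheta)^{|\mfu|}$ is correct and matches the paper exactly: the ``balancing identity'' $\frac{(\abs{\mfu}!)^{2\delta}}{\gamma_{\mfu}}\prod_{j\in\mfu}\beta_j^2=\gamma_{\mfu}^{\vartheta}\phi(\vartheta)^{|\mfu|}$ is the same calculation that gives the paper's choice of weights, and $C_{s,\gamma,\vartheta}=C_F^2 S_s^{(1+\vartheta)/\vartheta}$ is identical. You are also right that, as stated, the lemma only claims $n$-independence, so finiteness of the finite sum $S_s$ already establishes the literal statement.

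However, the final paragraph --- the attempt to show that $S_s$ is in fact bounded uniformly in $s$, which is the whole point of the weight construction --- has a genuine gap. Grouping by cardinality produces
\begin{equation*}
S_s\le \sum_{\ell\ge 0}(\ell!)^{\delta q-1}\Bigl(\phi(\vartheta)^{1/(1+\vartheta)}\|\vbeta\|_{\ell^q}^{q}\Bigr)^{\ell},\qquad q=\tfrac{2\vartheta}{1+\vartheta},
\end{equation*}
and this series diverges whenever $\delta q>1$, since the factorial power grows super-exponentially. But in the paper's regime one has $q=\delta p$ (for $\vartheta=\tfrac{\delta p}{2-\delta p}$), hence $\delta q=\delta^2 p$, which exceeds $1$ for any $p>1/\delta^2$; at the endpoint $p=1/\delta$, $\vartheta=1$, $q=1$, $\delta q=\delta>1$. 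In the first regime one has $q\in(\tfrac{2}{3},1)$ for $\omega\in(\tfrac12,1)$, so $\delta q>1$ for every admissible $\omega$ as soon as $\delta\geq\tfrac32$. You check convergence only in the special case $\delta=1$, and the argument simply does not extend to $\delta>1$.

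The paper circumvents this by a different mechanism: it first uses the superadditivity inequality $\sum_{\mfu}a_{\mfu}^{\delta}\le\bigl(\sum_{\mfu}a_{\mfu}\bigr)^{\delta}$, valid for nonnegative $a_{\mfu}$ and $\delta\ge1$, applied to $a_{\mfu}=\bigl(\Lambda_{\mfu}^{2\vartheta}\phi(\vartheta)^{|\mfu|}\bigr)^{1/((1+\vartheta)\delta)}$. This \emph{pulls the power $\delta$ outside the sum}, so the inner sum carries the factorial with exponent $\varrho=\tfrac{2\vartheta}{1+\vartheta}<1$ (for $\vartheta<1$), not $\delta\varrho$. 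The inner sum is then bounded by multiplying and dividing by $\prod_{j\in\mfu}\alpha_j^{\varrho}$ (with $\alpha_j=\beta_j^p/\tau$), applying H\"older with exponents $1/\varrho$ and $1/(1-\varrho)$, and invoking \cite[Lemma~6.3]{KuoSchwabSloan2012}. The endpoint $p=\delta^{-1}$ is handled separately: there $\vartheta=1$, and the superadditivity step alone reduces $S_s$ to $\bigl(\sum_{\mfu}|\mfu|!\prod_{j\in\mfu}(\beta_j/\sqrt6)^{1/\delta}\bigr)^{\delta}$, where the $\ell^{1/\delta}$-norm threshold $\sqrt6$ enters. Without this superadditivity step, the factorial growth cannot be tamed, and your series-by-cardinality approach cannot recover the claimed $s$-uniform bound for $\delta>1$.
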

\begin{proof}
	The proof follows the lines of \cite[Theorem 4.2]{Gilbert2019} and \cite[Theorem 6.4]{KuoSchwabSloan2012}.
We start by estimating the weighted norm 	\eqref{weighted norm def} of a function $F$ satisfying \eqref{QMC lemma assumption} by
	\begin{align*}
		\norm{F}{\mW_{\bga}(Y^s)}^2 
		\leq
		C_F^2 \sum_{\mfu\subseteq  \{1:s\}} \frac{\Lambda_{\mfu}^2}{\gamma_{\mfu}}, \quad
		\text{ where }\quad
		\Lambda_{\mfu}:= (\abs{\mfu}!)^\delta \prod_{j\in \mfu} \beta_j.
	\end{align*}
	Notice that if $n=2^m$ is power of two, it holds that $\varphi(n) = \frac{n}{2}$. Therefore \eqref{QMC error} implies the estimate for the mean-square error
	\begin{align*}
		\mE\brac{\abs{I_s(F)-Q^{\Delta}_{s,n}(F)}^2}
		\leq
		C_{s,\gamma,\vartheta} \, \brac{\frac{n}{2}}^{-\frac{1}{\vartheta}}
	\end{align*}
	where
	\begin{align*}
		C_{s,\gamma,\vartheta}
		=
		\brac{\sum_{\mfu\subseteq \{1:s\}}
			\gamma_{\mfu}^{\vartheta}\, \phi(\vartheta)^{\abs{\mfu}}
		}^\frac{1}{\vartheta}
		\brac{C_F^2 \sum_{\mfu\subseteq  \{1:s\}} \frac{\Lambda_{\mfu}^2}{\gamma_{\mfu}} }.		 
	\end{align*}
Our aim is to demonstrate that $C_{s,\gamma,\vartheta}$ can be bounded independently of $s$.
Following \cite[Lemma 6.2]{KuoSchwabSloan2012} the optimal selection of the weights is given by \eqref{QMC weight}. For this specific weights we have 
	\begin{align*}
		C_{s,\gamma,\vartheta}
		= 
		C_F^2  \, S_{s,\vartheta}^{(1+\vartheta)/\vartheta}, \quad \text{where} \quad
		S_{s,\vartheta}
		:=
		\sum_{\mfu \in \{1:s\}}
		\brac{\Lambda_{\mfu}^{2\vartheta}\, \phi(\vartheta)^{\abs{\mfu}} }^{\frac{1}{1+\vartheta}}.
	\end{align*}
	Thus, it remains to demonstrate that $S_{s,\vartheta}$ can be bounded independently of $s$. 
	In the case $p=\delta^{-1}$, we select $\vartheta=1$, and recall that $\phi(1)=\tfrac{1}{6}$. Since $\delta\geq 1$, we have
	\begin{align*}
		S_{s,\vartheta}
		=
		\sum_{\mfu \subseteq \{1:s\}}  (\abs{\mfu}!)^{\delta}  \prod_{j \in \mfu} 
		\frac{\beta_j }{\sqrt{6}}
		\leq
		\brac{\sum_{\mfu \subseteq \{1:s\}}  (\abs{\mfu}!)  \prod_{j \in \mfu} 
		\brac{\frac{\beta_j }{\sqrt{6}}}^{\frac{1}{\delta}}}^{\delta}
		\leq
		\brac{1-\sum_{j\geq 1} \brac{\frac{\beta_j }{\sqrt{6}}}^{\frac{1}{\delta}}}^{-\delta}\hspace*{-3mm},
	\end{align*}
where we have used \cite[Lemma 6.3]{KuoSchwabSloan2012} in the last step. The right-hand side is finite, since $\|\vbeta\|_{\ell^p} \leq \sqrt{6}$, and is independent of $s$. 

For the remaining case $0<p<\delta^{-1}$ we recall again that $\delta \geq 1$ and thereby obtain
	\begin{align*}
		&S_{s,\vartheta}
		=
		\sum_{\mfu \in \{1:s\}}
		\brac{\Lambda_{\mfu}^{2\vartheta}\, \phi(\vartheta)^{\abs{\mfu}} }^{\frac{1}{1+\vartheta}}
		\leq
		\brac{
			\sum_{\mfu \in \{1:s\}}
			\brac{\Lambda_{\mfu}^{2\vartheta}\, \phi(\vartheta)^{\abs{\mfu}} }^{\frac{1}{(1+\vartheta)\delta}}
		}^{\delta}\\
		&=
		\brac{
			\sum_{\mfu \in \{1:s\}}
			(\abs{\mfu}!)^{\frac{2 \vartheta }{1+\vartheta}}  \prod_{j \in \mfu} 
			\beta_j^{\frac{2\vartheta}{(1+\vartheta)\delta}} 
			\phi(\vartheta)^{\frac{1}{(1+\vartheta)\delta}}
		}^{\delta}
		=
		\brac{
			\sum_{\mfu \subseteq \{1:s\}}  
			(\abs{\mfu}!)^{\varrho}  \prod_{j \in \mfu} 
			\beta_j^{\frac{\varrho}{\delta}} 
			\phi(\vartheta)^{\frac{1}{(1+\vartheta)\delta}} 
		}^\delta
	\end{align*}
	where $\varrho:= \frac{2 \vartheta }{1+\vartheta} < 1$ for $\vartheta < 1$.
Let 
	$\eta:= \frac{\varrho }{1-\varrho} =
	\frac{2\vartheta}{1-\vartheta}$ 
	and
	$ \kappa:=
	\frac{1}{(1+\vartheta)\delta(1-\varrho)}
	=
	\frac{1}{(1-\vartheta)\delta}$. We multiply and divide each term in the above estimate by $\prod_{j \in \mfu} \alpha_j^{\varrho}$ with $\alpha_j > 0$ to be specified 
	later. Then we apply H\"older’s inequality with conjugate exponents $\frac{1}{\varrho}>1$ and $\frac{1}{1-\varrho}>1$ and \cite[Lemma 6.3]{KuoSchwabSloan2012} to obtain
	\begin{equation} \label{Sstheta}
	\begin{split}
		S_{s,\vartheta}
		&\leq
		\brac{
			\sum_{\mfu \subseteq \{1:s\}}  (\abs{\mfu}!)^{\varrho} 
			\brac{\prod_{j \in \mfu} \alpha_j^\varrho}
			\brac{ \prod_{j \in \mfu} 
				\brac{\frac{\beta_j^{\frac{1}{\delta}}}{\alpha_j}}^{\varrho} 
				\phi(\vartheta)^{\frac{1}{(1+\vartheta)\delta}}}}^\delta
		\\
		&\leq
		\brac{\sum_{\abs{\mfu} < \infty}  \abs{\mfu}! \prod_{j \in \mfu} \alpha_j}^{\varrho\delta}
		\brac{\sum_{\abs{\mfu} < \infty}   \prod_{j \in \mfu} 
			\brac{\frac{\beta_j^{\frac{1}{\delta}}}{\alpha_j}}^{\eta}  \phi(\vartheta)^{\kappa} }^{{(1-\varrho)\delta}}
		\\
		&\leq
		\brac{\frac{1}{1-\sum_{j\geq 1} \alpha_j}}^{\varrho\delta} 
		\exp \brac{(1-\varrho)\delta\,{\phi(\vartheta)^{\kappa}}
			\sum_{j\geq 1} 	\brac{\frac{\beta_j^{\frac{1}{\delta}}}{\alpha_j}}^{\eta}}.
			\end{split}
	\end{equation}
The right-hand side is finite if $\sum_{j\geq 1}\alpha_j < 1$ and $\sum_{j\geq 1} 	\big( \beta_j^{\frac{1}{\delta}}/\alpha_j\big)^{\eta}$ is finite. In this case this bound is also independent of $s$. We now choose
	\begin{align*}
		\alpha_j := \frac{\beta_j^{p}}{\tau} \text{ for some } \tau > \norm{\vbeta}{\ell^p}^p.
	\end{align*}
This clearly implies the first condition  $\sum_{j\geq 1}\alpha_j < 1$. Concerning the second condition, we observe that
	\[
\sum_{j\geq 1} 	\brac{\frac{\beta_j^{\frac{1}{\delta}}}{\alpha_j}}^{\eta} 	
= \tau^\eta \sum_{j\geq 1} 	\beta_j^{(\frac{1}{\delta}-p)\eta}. 
	\] 
According to \eqref{QMC lemma assumption}, this sum is finite if $(\frac{1}{\delta}-p)\eta \geq p$. Recalling the definition of $\eta$, this is equivalent to
	\begin{align*}
\vartheta \geq \frac{\delta p}{2-\delta p}.
	\end{align*}
	Since the error estimate \eqref{QMC error} and \eqref{Sstheta} are simultaneously valid for $\vartheta \in (\frac{1}{2},1)$, we choose $\vartheta$ as any fixed value $\omega\in\left(\tfrac{1}{2}, 1 \right)$ when $\delta p\in \left( 0,\tfrac{2}{3}  \right]$, and for $\delta p\in \left(\tfrac{2}{3}  , 1  \right)$, we set $\vartheta = \frac{\delta p}{2-\delta p}$. This finishes the proof.
\end{proof}
\begin{remark}\label{QMC remark}
	Theorem \ref{QMC theorem} naturally extends \cite[Theorem 4.2]{Gilbert2019} and \cite[Theorem 6.4]{KuoSchwabSloan2012} to the case $\delta > 1$.
	\end{remark}


\begin{thebibliography}{10}

\bibitem{Andreev2012}
{\sc R.~Andreev and C.~Schwab}, {\em Sparse tensor approximation of parametric
  eigenvalue problems}, in Lecture Notes in Computational Science and
  Engineering, Springer Berlin Heidelberg, Aug. 2011, pp.~203--241,
  \url{https://doi.org/10.1007/978-3-642-22061-6_7}.

\bibitem{Babuska1989}
{\sc I.~Babuska and J.~E. Osborn}, {\em Finite element-{G}alerkin approximation
  of the eigenvalues and eigenvectors of selfadjoint problems}, Mathematics of
  Computation, 52 (1989), p.~275,
  \url{http://dx.doi.org/10.2307/2008468}.

\bibitem{BoutetKnee1967}
{\sc L.~Boutet~de Monvel and P.~Kr\'{e}e}, {\em Pseudo-differential operators
  and {G}evrey classes}, Ann. Inst. Fourier (Grenoble), 17 (1967),
  pp.~295--323, \url{http://www.numdam.org/item?id=AIF_1967__17_1_295_0}.

\bibitem{ChenRodino1996}
{\sc H.~Chen and L.~Rodino}, {\em General theory of {PDE} and {G}evrey
  classes}, in General theory of partial differential equations and microlocal
  analysis ({T}rieste, 1995), vol.~349 of Pitman Res. Notes Math. Ser.,
  Longman, Harlow, 1996, pp.~6--81.

\bibitem{ChernovSchwab2012}
{\sc A.~Chernov and C.~Schwab}, {\em Exponential convergence of
  {G}auss-{J}acobi quadratures for singular integrals over simplices in
  arbitrary dimension}, SIAM J. Numer. Anal., 50 (2012), pp.~1433--1455,
  \url{https://doi.org/10.1137/100812574}.

\bibitem{CohenDevoreSchwab2010}
{\sc A.~Cohen, R.~DeVore, and C.~Schwab}, {\em Convergence rates of best
  {$N$}-term {G}alerkin approximations for a class of elliptic s{PDE}s}, Found.
  Comput. Math., 10 (2010), pp.~615--646,
  \url{https://doi.org/10.1007/s10208-010-9072-2}.

\bibitem{CohenDevoreSchwab2011}
{\sc A.~Cohen, R.~Devore, and C.~Schwab}, {\em Analytic regularity and
  polynomial approximation of parametric and stochastic elliptic {PDE}'s},
  Anal. Appl. (Singap.), 9 (2011), pp.~11--47,
  \url{https://doi.org/10.1142/S0219530511001728}.

\bibitem{CuiDeSterckGilbertPolishchukScheichl2023}
{\sc T.~Cui, H.~D. Sterck, A.~D. Gilbert, S.~Polishchuk, and R.~Scheichl}, {\em
  Multilevel Monte Carlo methods for stochastic convection-diffusion eigenvalue
  problems}, 2023, \url{https://arxiv.org/abs/2303.03673}.

\bibitem{Davies1995}
{\sc E.~B. Davies}, {\em Spectral theory and differential operators}, vol.~42
  of Cambridge Studies in Advanced Mathematics, Cambridge University Press,
  Cambridge, 1995,
  \url{https://doi.org/10.1017/CBO9780511623721}.

\bibitem{DoelzPrep22}
{\sc J.~Dölz and D.~Ebert}, {\em On uncertainty quantification of eigenpairs
  with higher multiplicity}, SIAM J. Numer. Anal., 62 (2024), pp.~422--451,
  \url{https://doi.org/10.1137/22M15293}.

\bibitem{Giani2011}
{\sc S.~Giani and I.~G. Graham}, {\em Adaptive finite element methods for
  computing band gaps in photonic crystals}, Numer. Math., 121 (2011),
  pp.~31--64,
  \url{https://doi.org/10.1007/s00211-011-0425-9}.

\bibitem{GilbargTrudinger98}
{\sc D.~Gilbarg and N.~S. Trudinger}, {\em Elliptic partial differential
  equations of second order}, Classics in Mathematics, Springer-Verlag, Berlin,
  2001.
\newblock Reprint of the 1998 edition.

\bibitem{Gilbert2019}
{\sc A.~D. Gilbert, I.~G. Graham, F.~Y. Kuo, R.~Scheichl, and I.~H. Sloan},
  {\em Analysis of quasi-Monte Carlo methods for elliptic eigenvalue problems
  with stochastic coefficients}, Numer. Math., 142 (2019),
  pp.~863--915,
  \url{https://doi.org/10.1007/s00211-019-01046-6}.

\bibitem{GilbertScheichl}
{\sc A.~D. Gilbert and R.~Scheichl}, {\em Multilevel quasi-{M}onte {C}arlo for random elliptic eigenvalue problems {I}: regularity and error analysis}, IMA J. Numer. Anal., 44 (2024), pp. 466–503,
  \url{https://doi.org/10.1093/imanum/drad011}.

\bibitem{GilbertScheichl2}
{\sc A.~D. Gilbert and R.~Scheichl}, {\em {Multilevel quasi-Monte Carlo for
  random elliptic eigenvalue problems II: efficient algorithms and numerical
  results}}, IMA J. Numer. Anal., 44 (2024), pp. 504-535,
  \url{https://doi.org/10.1093/imanum/drad009}.

\bibitem{Hakula23}
{\sc L.~Grubi\v{s}i\'{c}, M.~Saarikangas, and H.~Hakula}, {\em Stochastic
  collocation method for computing eigenspaces of parameter-dependent
  operators}, Numer. Math., 153 (2023), pp.~85--110,
  \url{https://doi.org/10.1007/s00211-022-01339-3}.

\bibitem{Henrot2006}
{\sc A.~Henrot}, {\em Extremum problems for eigenvalues of elliptic operators},
  Frontiers in Mathematics, Birkh\"{a}user Verlag, Basel, 2006.

\bibitem{Kaarnioja2020}
{\sc V.~Kaarnioja, F.~Y., and I.~H. Sloan}, {\em Uncertainty quantification
  using periodic random variables}, SIAM J. Numer. Anal., 58
  (2020), pp.~1068--1091,
  \url{https://doi.org/10.1137/19m1262796}.

\bibitem{Kato:1966:PTL}
{\sc T.~Kato}, {\em Perturbation Theory for Linear Operators}, Springer Berlin
  Heidelberg, 1966.

\bibitem{Krantz2002}
{\sc S.~G. Krantz and H.~R. Parks}, {\em A Primer of Real Analytic Functions},
  Birkh\"{a}user Boston, 2002,
  \url{https://doi.org/10.1007/978-0-8176-8134-0}.

\bibitem{KuoNuyens2016}
{\sc F.~Y. Kuo and D.~Nuyens}, {\em Application of quasi-{M}onte {C}arlo
  methods to elliptic {PDE}s with random diffusion coefficients: a survey of
  analysis and implementation}, Found. Comput. Math., 16 (2016),
  pp.~1631--1696,
  \url{https://doi.org/10.1007/s10208-016-9329-5}.

\bibitem{KuoSchwabSloan2013}
{\sc F.~Y. Kuo, C.~Schwab, and I.~H. Sloan}, {\em Quasi-{M}onte {C}arlo methods
  for high-dimensional integration: the standard (weighted {H}ilbert space)
  setting and beyond}, ANZIAM J., 53 (2011), pp.~1--37,
  \url{https://doi.org/10.1017/S1446181112000077}.

\bibitem{KuoSchwabSloan2012}
{\sc F.~Y. Kuo, C.~Schwab, and I.~H. Sloan}, {\em Quasi-Monte Carlo finite
  element methods for a class of elliptic partial differential equations with
  random coefficients}, SIAM J. Numer. Anal., 50 (2012),
  pp.~3351--3374, 
  \url{https://doi.org/10.1137/110845537}.

\bibitem{rellich1969perturbation}
{\sc F.~Rellich and J.~Berkowitz}, {\em Perturbation Theory of Eigenvalue
  Problems}, New York University. Institute of Mathematical Sciences, Gordon
  and Breach, 1969.

\bibitem{Rodino1993}
{\sc L.~Rodino}, {\em Linear partial differential operators in {G}evrey
  spaces}, World Scientific Publishing Co., Inc., River Edge, NJ, 1993,
  \url{https://doi.org/10.1142/9789814360036}.

\end{thebibliography}
\end{document}